\documentclass[11pt, reqno]{amsart}
\usepackage{amsmath,amsthm,amssymb,enumerate}
\usepackage[all]{xy}
\usepackage{array}

\usepackage[dvipsnames]{xcolor}
\definecolor{citation}{rgb}{0,.40,.80}
\usepackage[colorlinks,linkcolor=citation,citecolor=citation,urlcolor=citation]{hyperref}

\newcommand \C {\mathbb C}
\newcommand \cC {\mathcal C}

\newcommand \fF {\mathcal F}
\newcommand \gG {\mathcal G}
\newcommand \hH {\mathcal H}

\renewcommand \L {\mathcal L}

\renewcommand \O {\mathcal O}
\renewcommand \P {\mathbb P}
\newcommand \Q {\mathbb Q}
\newcommand \Ql {\mathbb Q_{\ell}}
\newcommand \R {\mathbb R}

\newcommand \Z {\mathbb Z}
\newcommand\et{\textrm{\'et}}
\newcommand\xra{\xrightarrow}

\DeclareMathOperator \alb {alb}
\DeclareMathOperator \Aut {Aut}

\DeclareMathOperator \hyp {hyp}
\DeclareMathOperator \CH {CH}
\DeclareMathOperator \charr {char}
\DeclareMathOperator \el {ell}
\DeclareMathOperator \id {id}

\DeclareMathOperator \Hilb {Hilb}
\DeclareMathOperator \Kt {K3}
\DeclareMathOperator \OO {O}

\DeclareMathOperator \Pic {Pic}
\DeclareMathOperator \NS {NS}
\newcommand \Picbar {\overline{\mathrm{Pic}}{}} 
\DeclareMathOperator \Spec {Spec}

\DeclareMathOperator \Kum {Kum}
\DeclareMathOperator{\Fix}{Fix}
\DeclareMathOperator{\Gal}{Gal}
\DeclareMathOperator{\Id}{Id}
\DeclareMathOperator{\supp}{supp}
\DeclareMathOperator{\sym}{sym}
\DeclareMathOperator{\Sym}{Sym}
\DeclareMathOperator{\im}{im}
\DeclareMathOperator{\sep}{sep}
\newcommand \kbar {{\bar{k}}}
\DeclareMathOperator{\Char}{char}
\DeclareMathOperator{\Span}{Span}
\DeclareMathOperator{\Frac}{Frac}

\DeclareMathOperator{\shF}{\mathcal{F}}

\newtheorem {thm} {Theorem}[section]
\newtheorem {cor} [thm] {Corollary}

\newtheorem {lem} [thm] {Lemma}
\newtheorem {lemma} [thm] {Lemma}
\newtheorem {prop} [thm] {Proposition}
\newtheorem {claim} [thm] {Claim}

\theoremstyle{definition}
\newtheorem {defn} [thm] {Definition}
\newtheorem {setting} [thm] {Setting}
\newtheorem {cons} [thm] {Construction}
\newtheorem {question} {Question}

\newtheorem {example}[thm] {Example}

\newtheorem {rmk}[thm] {Remark}

\numberwithin{equation}{section}

\hyphenpenalty = 1000

\makeatletter
\newcommand{\xleftrightarrow}[2][]{\ext@arrow 3359\leftrightarrowfill@{#1}{#2}}
\newcommand{\dashto}[2][]{\ext@arrow 0359\rightarrowfill@@{#1}{#2}}
\newcommand{\xdashleftarrow}[2][]{\ext@arrow 3095\leftarrowfill@@{#1}{#2}}
\newcommand{\xdashleftrightarrow}[2][]{\ext@arrow 3359\leftrightarrowfill@@{#1}{#2}}
\def\rightarrowfill@@{\arrowfill@@\relax\relbar\rightarrow}
\def\leftarrowfill@@{\arrowfill@@\leftarrow\relbar\relax}
\def\leftrightarrowfill@@{\arrowfill@@\leftarrow\relbar\rightarrow}
\def\arrowfill@@#1#2#3#4{%
  $\m@th\thickmuskip0mu\medmuskip\thickmuskip\thinmuskip\thickmuskip
   \relax#4#1
   \xleaders\hbox{$#4#2$}\hfill
   #3$%
}
\makeatother

\begin{document}

\title[Groups of symplectic involutions]{Groups of symplectic involutions \\ on symplectic  varieties of Kummer type \\ and their fixed loci}

\author{Sarah Frei and Katrina Honigs}

\date{}

\begin{abstract}
We describe the Galois action on the middle $\ell$-adic cohomology of smooth, projective fourfolds $K_A(v)$ that occur as a fiber of the Albanese morphism on moduli spaces of sheaves on an abelian surface $A$ with Mukai vector $v$. We show this action is determined by the action on $H^2_{\et}(A_{\bar{k}},\Q_\ell(1))$ and on a subgroup $G_A(v) \leqslant (A\times \hat{A})[3]$, which depends on $v$. This generalizes the analysis carried out by Hassett and Tschinkel over $\C$ \cite{HasTsc}. As a consequence, over number fields, we give a condition under which $K_2(A)$ and $K_2(\hat{A})$ are not derived equivalent.

The points of $G_A(v)$ correspond to involutions of $K_A(v)$. Over $\C$, they are known to be symplectic and contained in the kernel of the map $\Aut(K_A(v))\to \mathrm{O}(H^2(K_A(v),\Z))$. We describe this kernel for all varieties $K_A(v)$ of dimension at least $4$. 

When $K_A(v)$ is a fourfold over a field of characteristic~0, the fixed-point loci of the involutions 
contain K3 surfaces whose cycle classes span a large portion of the middle cohomology. We examine the fixed loci in fourfolds $K_A(0,l,s)$ over $\C$ where $l$ is a $(1,3)$-polarization, finding the K3 surface
to be
elliptically fibered under a Lagrangian fibration of $K_A(0,l,s)$.
\end{abstract}

\maketitle

\section{Introduction}

Given a polarized abelian surface $(A, H)$ defined over an arbitrary field $k$, we may study moduli spaces of geometrically $H$-stable sheaves on $A$ with a fixed Mukai vector $v=(r,l,s)$, that is, fixed rank, N\'eron-Severi class of the determinental line bundle, and Euler characteristic. Under mild conditions on the Mukai vector, the moduli spaces  $M_A(v)$ are smooth and projective. Their Albanese varieties are  $A\times\hat{A}$, and we denote 
a fiber
of the Albanese morphism by $K_A(v)$.

If defined over $\C$, the variety $K_A(v)$ is a
hyperk\"ahler variety of dimension $v^2-2$ and is deformation equivalent to the \textit{generalized Kummer variety} $K_n(A)\cong K_A(1,0,-n-1)$ where $n:=\frac{v^2}{2}-1$, which is
given by the fiber over $0$ of the summation map acting on the Hilbert scheme of length-$(n+1)$ points on $A$. 
Following Fu and Li \cite{FuLi}, who study these varieties over other fields, we call the $K_A(v)$ symplectic varieties (see Proposition~\ref{prop:Kvsmooth}). 
There are four known deformation types of hyperk\"ahler varieties: $\Kt^{[n]}$-type, Kummer type (or $\Kum_n$-type), and the two sporadic examples of O'Grady \cite{OG10,OG6}.
The varieties $K_A(v)$ are of Kummer $n$-type. 
It has been shown
\cite[Prop.~2.4]{MW} that under a lattice-theoretic condition, if $n+1$ is a prime power, any hyperk\"ahler of 
Kummer $n$-type is the fiber of the Albanese map of a
moduli space of stable objects on an abelian surface $A$.
So varieties $K_A(v)$
do not exhaust the class, but are at this point the best understood.

In \cite{HasTsc}, Hassett and Tschinkel analyze the cohomology of complex generalized Kummer fourfolds $K_2(A)$. They show that $H^4(K_2(A), \Q)$ is generated by $H^2(K_2(A),\Q)$ and an $81$-dimensional vector space spanned by the cycle classes of $81$ distinct K3 surfaces in $K_2(A)$. These surfaces are each
contained in the fixed locus of a symplectic involution of the form $t_x^*\iota^*$ where $\iota$ is multiplication by $-1$ on $A$ and $t_x$ is translation by 
a point of the three-torsion $A[3]$ of $A$.
Hassett and Tschinkel use deformation theory to show that 
the middle cohomology for any hyperk\"ahler variety $X$ of $\Kum_2$-type
has a similar decomposition.
The cohomology of Kummer-type hyperk\"ahler varieties is also studied in \cite{LLVdecomp}.

In this paper, we
extend these results
by characterizing the Galois action on the $\ell$-adic \'etale cohomology of
fourfolds $K_A(v)$
over non-closed fields
As one might expect from  the results of  Hassett--Tschinkel,
there is an  $81$-dimensional subspace of
$H^4_{\et}(K_2(A)_{\bar{k}},\Ql(2))$ whose Galois action is determined by the structure of $A[3]$. However, deformation-theoretic tools are too coarse to keep track of
how the Galois action changes for other fourfolds $K_A(v)$,
which we find depends on $v$:

\begin{thm}[Theorem~\ref{thm:middlecohom}, Proposition~\ref{charp}]
  \label{thm:introcohom}
Suppose $K_A(v)$ is a smooth, projective variety over an arbitrary field $k$.
Then there is a subgroup $G_{A_{\kbar}}(v) \leqslant (A_{\kbar}\times \hat{A}_{\kbar})[3]$ and a Galois equivariant isomorphism  
$$H^4_{\et}(K_A(v)_{\bar{k}},\Q_\ell(2))\cong \Sym^2H^2_{\et}(K_A(v)_{\bar{k}},\Q_\ell(1)) \oplus V,$$
where $V$ is the $80$-dimensional subrepresentation of the permutation representation $\Q_\ell[G_{A_{\bar{k}}}(v)]$ such that
\[\Q_\ell[G_{A_{\bar{k}}}(v)]\cong V \oplus \Q_\ell,\]
and the trivial representation $\Q_\ell$ is the span of $(0,0)\in G_{A_{\bar{k}}}(v)$. The Galois action on the group ring $\Q_\ell[G_{A_{\bar{k}}}(v)]$ is induced by the action on $G_{A_{\bar{k}}}(v)$.
\end{thm}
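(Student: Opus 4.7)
The plan is to start from the Hassett--Tschinkel vector-space decomposition of $H^4$ for hyperk\"ahler fourfolds of $\Kum_2$-type and upgrade it to a Galois-equivariant isomorphism using the paper's explicit identification of $\ker\nu$ with $G_A(v)$. Since $\charr k = 0$ and $K_A(v)$ is smooth projective, I would first spread out $A$ together with the moduli problem over a finitely generated subring of $k$, embed it in $\C$, and apply smooth and proper base change together with Artin's comparison theorem; this transports the decomposition from rational singular cohomology of $K_A(v)_\C$ to $\ell$-adic \'etale cohomology of $K_A(v)_{\bar k}$, while preserving the Galois structure on the latter.

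Since $K_A(v)$ has dimension four, it is of $\Kum_2$-type, so Hassett--Tschinkel's deformation-theoretic argument \cite{HasTsc} yields a decomposition
$$H^4(K_A(v)_\C, \Q) \cong \Sym^2 H^2(K_A(v)_\C, \Q) \oplus V, \quad \dim V = 80.$$
Using the paper's explicit description of $\ker\nu$, each nonzero $g \in G_A(v) \subset (A \times \hat A)[3]$ corresponds to a symplectic involution $\sigma_g \in \ker\nu$ whose fixed locus contains a K3 surface $S_g$. I would then show that the cycle classes $\{[S_g] : g \neq (0,0)\}$ form a basis of $V$ modulo $\Sym^2 H^2$, reducing to the case $v = (1,0,-3)$ either by direct comparison with Hassett--Tschinkel's 81-K3 calculation or by a deformation argument on the classes.

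For Galois equivariance, I would use that $G_A(v) \leqslant (A_{\bar k} \times \hat A_{\bar k})[3]$ inherits a natural Galois action and that the construction $g \mapsto \sigma_g$ is functorial---realized by Fourier--Mukai autoequivalences determined by $g$, as made explicit earlier in the paper---so that the Galois group permutes the involutions $\sigma_g$ through its action on $G_A(v)$, and hence permutes the K3 surfaces $S_g$ accordingly. Galois equivariance of the \'etale cycle-class map then shows that $[g] \mapsto [S_g]$ extends to a Galois-equivariant linear map $\Q_\ell[G_A(v)] \to H^4_\et(K_A(v)_{\bar k}, \Q_\ell(2))/\Sym^2 H^2_\et$ whose image is $V$ and whose kernel is the Galois-stable line spanned by $[(0,0)]$, giving the desired isomorphism.

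The hardest step is verifying strict Galois equivariance of the assignment $g \mapsto \sigma_g$ (and hence $g \mapsto S_g$); this requires tracing the explicit Fourier--Mukai construction of $\ker\nu$ elements through base change, rather than merely noting that Galois permutes the set of all such involutions. A secondary subtlety is to justify why $[(0,0)]$, and not the sum of all basis vectors, is the correct Galois-invariant direction to quotient by: this should come from identifying the ``degenerate'' cycle class associated to $(0,0)$ (the identity element of $G_A(v)$) and showing that it lies in $\Sym^2 H^2_\et$.
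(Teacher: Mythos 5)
Most of your outline runs parallel to the paper's actual proof: comparison and smooth base change to move between $K_A(v)_\C$ and $K_A(v)_{\bar k}$, the Kapfer--Menet/Hassett--Tschinkel deformation input identifying the $80$-dimensional complement to $\Sym^2H^2$, and Galois equivariance via the permutation of the involutions $\iota_{(x,y)}$ (the paper's Proposition~\ref{prop:Galois}) together with equivariance of the cycle class map (Lemma~\ref{lem:Galoiscohom}). However, your final identification step contains a genuine error. You propose the map $\Q_\ell[G_{A_{\bar k}}(v)]\to H^4_\et/\Sym^2H^2_\et$, $[g]\mapsto [S_g]$, and assert its kernel is the line spanned by $[(0,0)]$, to be justified by showing $s_{(0,0)}\in\Sym^2H^2_\et$. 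That last claim is false, so the ``secondary subtlety'' you flag cannot be resolved in the way you propose. Indeed, the automorphisms $L_y\otimes t_x^*$ for $(x,y)\in G_A(v)$ act trivially on $H^2$ (Theorem~\ref{thm:symplectic}), hence trivially on the image of $\Sym^2H^2$ in $H^4$, and by Lemma~\ref{translation} they permute the fixed K3 surfaces transitively; so if $s_{(0,0)}$ lay in $\Sym^2H^2$, then every $s_{(x,y)}$ would, and the differences $s_{(x,y)}-s_{(0,0)}$ could not span an $80$-dimensional complement to $\Sym^2H^2$. The genuinely degenerate Galois-invariant direction is an invariant combination of \emph{all} $81$ classes (the orbit sum), i.e.\ exactly the direction you explicitly rejected.

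The error is reparable but requires a different argument than the one you sketch: either note that the kernel of your map is a $G$-stable, Galois-stable line on which Galois necessarily acts trivially, so by semisimplicity (Maschke) the quotient is still isomorphic as a Galois representation to the complement of $\Q_\ell\cdot[(0,0)]$; or, better, avoid the quotient map on the full group ring altogether, as the paper does. The paper never asserts anything about where $s_{(0,0)}$ sits relative to $\Sym^2H^2$: it takes from \cite[Prop.~4.3]{HasTsc} (via the deformation of pairs in \cite[Thm.~7.5(ii)]{KapferMenet}) that the differences $s_{(x,y)}-s_{(0,0)}$ span an $80$-dimensional direct-sum complement $V$ to $\Sym^2H^2_\et$, and then matches $s_{(x,y)}-s_{(0,0)}\mapsto [(x,y)]-[(0,0)]$ with the subrepresentation of $\Q_\ell[G_{A_{\bar k}}(v)]$ complementary to the Galois-fixed basis vector $[(0,0)]$. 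The role of $(0,0)$ in the theorem is thus purely on the group-ring side, where $\sigma^*s_{(0,0)}=s_{(0,0)}$ makes the span of $[(0,0)]$ an honest trivial complement; it is not the class that absorbs the linear relation with $\Sym^2H^2$ in cohomology. A related smaller issue: your claim that the $80$ classes $[S_g]$, $g\neq(0,0)$, form a basis of the quotient also depends on the precise relation among the $81$ classes and $\Sym^2H^2$, and is cleanest to bypass by working with the differences from the start.
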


By a generalization of work of Yoshioka \cite{Yoshioka}, this means the Galois action on the middle cohomology is determined by the action on $H^2_{\et}(A_{\bar{k}},\Q_\ell(1))$ and the action on the subgroup $G_{A_{\bar{k}}}(v)$, which is the kernel of the isogeny $\phi\colon A\times\hat{A} \to A\times\hat{A}$ given by $(x,y)\mapsto (\phi_M(y)-sx,\phi_L(x)+ry)$ (See Section~\ref{sec:automorphisms}).
This stands in surprising contrast to the case of moduli spaces of sheaves on K3 surfaces---symplectic varieties of K3$^{[n]}$-type---where the cohomology representations depend only on that of the K3 surface \cite[Thm.~2]{Frei}. 

As a consequence, by studying the even cohomology of $K_2(A)$ for $A$ defined over a number field,
we are able to
show the following result on derived equivalence:

\begin{cor}[Corollary~\ref{cor:notderived}]\label{cor:intronotde}
Let $A$ be an abelian surface over a number field $k$ for which the permutation representations associated to $A_{\bar{k}}[3]$ and $\hat{A}_{\bar{k}}[3]$ are not isomorphic. Then $K_2(A)$ and $K_2(\hat{A})$ are not derived equivalent over $k$.
\end{cor}

In forthcoming work \cite{FHV} on Galois actions on torsion subgroups of abelian surfaces, examples of such abelian surfaces are constructed.
Intriguingly, this corollary shows that if $K_2(A)$ and $K_2(\hat{A})$ are derived equivalent after base change to $\C$, then the kernel of the Fourier--Mukai transform cannot be given by naturally associated bundles that would descend to the field of definition for $A$.
Corollary \ref{cor:intronotde} complements the recent work of
Magni \cite{Magni}, which provides a 
sufficient condition for the existence of such equivalences over algebraically closed fields of characteristic zero.

\medskip

The cohomology group $V$ in Theorem~\ref{thm:introcohom} is generated by K3 surfaces contained in the fixed-point loci of symplectic involutions on $K_A(v)$.
We give a case-by-case
explicit description of $G_A(v)$, and hence 
an explicit description of these symplectic involutions, which dictate the Galois action on $V$.

By work of Boissi\`ere--Nieper-Wisskirchen--Sarti in \cite{HigherEn}, Hassett--Tschinkel in \cite{HasTsc},
and Kapfer--Menet in \cite{KapferMenet},
for any hyperk\"ahler variety $X$ over $\C$ of $\Kum_{n-1}$-type,
the kernel
\[
\ker(\Aut(X)\to \OO(H^2(X,\Z)))\cong \Z/2\Z \ltimes (\Z/n\Z)^4
\]
consists of symplectic automorphisms of $X$; when $\dim X=4$, the kernel contains all of the symplectic involutions of $X$.
We give an explicit description of this
kernel for hyperk\"ahler varieties $K_A(v)$ of any dimension at least~$4$ over $\C$: 

\begin{thm}[Theorem~\ref{thm:symplectic}]\label{thm:introsymplectic}
  Suppose $K_A(v)$ is a smooth, projective variety over $k=\C$.
Then
\[\ker(\Aut(K_A(v))\to \OO(H^2(K_A(v),\Z)))\] 
consists of automorphisms
of the
following two forms:
$$L_y\otimes t_x^* \quad\text{and}\quad 
\kappa_{(x,y)}:=L_y\otimes t_x^*\kappa,$$
where $\kappa=\iota^*$ if $K_A(v)$ is an Albanese fiber over symmetric line bundles, and
otherwise $\kappa$ is a
composition of $\iota^*$ with a translation.
The $\kappa_{(x,y)}$ are symplectic involutions of $K_A(v)$, and when $\dim K_A(v)=4$, these are all of the symplectic involutions. 
\end{thm}

In the complex case, the group $G_{A}(v)$ also appears in \cite{MaKummers} as $\Gamma_v$.
Markman defines $\Gamma_v$ as the kernel of the map $\phi$ above as well
as in terms of Clifford algebras (\S10.1, Remark~4.3 \textit{op.cit.}).
The result \cite[Lemma~10.1]{MaKummers} and its proof
shows $\Gamma_v$ embeds into the monodromy group of $K_A(v)$,
acts trivially on $H^2(K_A(v),\Z)$ and $H^3(K_A(v),\Z)$, and 
that $M_A(v)$  is isomorphic to a quotient of $A\times\hat{A}\times K_A(v)$ by an action of $\Gamma_v$. Thus the fact that the automorphisms $L_y\otimes t_x^*$ are symplectic is not new, but we provide a proof to make our study of this family self-contained.

\medskip

Beyond their analysis of the middle cohomology for $K_2(A)$, Hassett and Tschinkel explicitly describe the fixed-point loci of the symplectic involutions. They show that the locus fixed by the standard involution contains the Kummer K3 surface
\[\overline{\{(a_1,a_2,a_3)\mid a_1=0, a_2=-a_3, a_2\neq 0\}},\]
as well as a unique isolated point
supported at the identity element $0$.
Tar\'i in \cite{Tari}
finishes the description by showing
there are 35 more isolated points, which are tuples of two-torsion points of $A$.
The deformation invariance of the symplectic involutions implies that the fixed locus of any $\iota_{(x,y)}$ in $K_A(v)$ also consists of a K3 surface and 36 isolated points \cite[Thm.~7.5]{KapferMenet}.

Motivated by these results, we seek a similar description of the fixed-point loci in 
fourfolds $K_A(0,l,s)$, whose general member is a degree $s+3$ line bundle on a genus 4 curve in the linear system $|L|$ with $c_1(L)=l$. These moduli spaces admit a Lagrangian fibration, which aids in our study. We give the following description:

\begin{thm}[Theorem~\ref{thm:badfibers}]
\label{thm:introfibration}
The K3 surface in the fixed-point locus of $\iota^*$ acting on $K_A(0,l,s)$ is elliptically fibered with four singular fibers of type $I_1$ and $10$ singular fibers of type $I_2$.
\end{thm}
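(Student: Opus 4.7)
The plan is to exploit the Lagrangian fibration $\pi\colon K_A(0,l,s) \to |L| \cong \P^2$ sending a sheaf to its support, and to study its restriction to the K3 surface $S \subset \Fix(\iota^*)$. Since $L$ is symmetric, the canonical lift of $[-1]$ decomposes $H^0(L) = H^0(L)^+ \oplus H^0(L)^-$; in the $(1,3)$-polarized case these eigenspaces have dimensions $(2,1)$, so $|L|^\iota \cong \P^1 \sqcup \{\mathrm{pt}\}$. The morphism $\pi$ is $\iota^*$-equivariant because $\supp(\iota^*F) = [-1]_A(\supp F)$, so $\pi(S) \subseteq |L|^\iota$. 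As $S$ is two-dimensional and connected, the restriction factors as $\pi|_S\colon S \to \P^1$, and a dimension count shows it is surjective with one-dimensional generic fibers.

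Next I would identify the generic fiber as an elliptic curve. For a smooth $\iota$-invariant $C \in \P^1$, Riemann--Hurwitz applied to $\iota|_C$ with $6$ ramification points at $C \cap A[2]^-$ shows that $C' = C/\iota$ has genus $1$. The Lagrangian fiber $P_C := \pi^{-1}(C)$ is an abelian surface (a Prym-type subvariety of $\mathrm{Jac}(C)$), and $\iota^*$ acts on $P_C$ by pull-back via $\iota|_C$. The identity component of $\Fix(\iota^*|_{P_C})$ is the image of $\pi^*\colon \mathrm{Jac}(C') \hookrightarrow \mathrm{Jac}(C)$, which lies in $P_C$ because $x + [-1](x) = 0$ in $A$; this is the desired elliptic generic fiber of $\pi|_S$.

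To classify singular fibers, I would identify two types of degeneration: (a) nodal members $C \in \P^1$ of the pencil, each yielding an $I_1$ fiber, whose count of $4$ comes from an explicit discriminant computation for the pencil $\P(H^0(L)^+)$ on the $(1,3)$-polarized surface; and (b) smooth $C$ for which $\Fix(\iota^*|_{P_C})$ acquires a second connected component, forcing two $\iota^*$-fixed elliptic curves to meet transversally at their common $2$-torsion and producing an $I_2$ fiber, with the count of $10$ coming from a $2$-torsion incidence condition on the family of Pryms. The Euler-characteristic identity $4\cdot 1 + 10\cdot 2 = 24 = \chi(S)$ provides a consistency check. The main obstacle is rigorously establishing these two counts and ruling out any additive Kodaira fiber types; this requires the discriminant calculation for (a) together with a careful degeneration analysis of the $\iota^*$-fixed locus on the Prym family for (b), supplemented by a local analysis of the elliptic fibration near each degenerate point to confirm that no fiber is of type $II$, $III$, $IV$, or $I_n^*$.
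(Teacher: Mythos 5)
Your setup (the Lagrangian fibration $f\colon K_A(0,l,s)\to|L|$, the eigenspace decomposition of $H^0(A,L)$ giving $|L|^{\iota^*}\cong\P^1\sqcup\P^0$, and the identification of the generic fiber of $\Fix(\iota^*)\to\P^1$ with $\Pic^d_{C/\iota}$ via the inclusion $\Pic^0(C/\iota)\hookrightarrow\Pic^0(C)$) matches the paper's Propositions on the fibration and on $\Fix(\iota^*)$, up to the minor caveat that which eigenspace is $2$-dimensional depends on the parity of the quadratic form $q_L$, so you should speak of the $2$-dimensional eigenspace $V_{\el}$ rather than asserting $(\dim H^0(L)^+,\dim H^0(L)^-)=(2,1)$. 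The gap is in your classification of the singular fibers. Your mechanism (b) is not what happens: the ten $I_2$ fibers do not lie over \emph{smooth} members of the pencil $\P V_{\el}$ where the fixed locus of the Prym jumps. For smooth $C\in\P V_{\el}$ the relevant fiber of $\Fix(\iota^*)$ is the smooth elliptic curve $\Pic^d_{C/\iota}$; the $I_2$ fibers lie over the ten \emph{singular} members of the pencil that acquire a node at one of the ten points of $A[2]$ not in the base locus (one member per such point, following Naruki's analysis of the induced fibration $W$ on the Kummer K3). Consequently your type (a) is also miscalibrated: the discriminant locus of the pencil has $4+10$ singular members, and the four giving $I_1$ fibers are those whose two nodes are non-torsion points exchanged by $\iota$ (so that $C/\iota$ is an $I_1$ curve), not simply ``the nodal members.''

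The deeper missing ingredient is that over any singular member $C$ the Lagrangian fiber is no longer an abelian (Prym-type) surface but the compactified Jacobian $\Picbar^d_C$ parametrizing rank-$1$ torsion-free sheaves, so a ``degeneration analysis of the $\iota^*$-fixed locus on the Prym family'' cannot by itself produce the fiber types. The paper's proof of Theorem~\ref{thm:badfibers} does exactly this boundary analysis: for the four $I_1$ members it shows $\Picbar^0_{C/\iota}$, itself a nodal genus-$1$ curve, injects into $(\ker j_C)^{\iota^*}$; for the ten $I_2$ members it uses the Altman--Kleiman presentation scheme over the genus-$3$ hyperelliptic normalization $C^\nu$ to exhibit two copies of $\P^1$, namely $\kappa(\pi^{-1}(\O_{C^\nu}))$ and $\kappa(\pi^{-1}(\O_{C^\nu}(q_1-q_2)))$, glued at two points with a twist. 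Only after these explicit fibers are produced does the Euler-characteristic identity $4\cdot 1+10\cdot 2=24$ enter, and there it does real work: it shows the exhibited curves exhaust the one-dimensional singular locus, which simultaneously rules out additional or additive Kodaira fibers. In your write-up the Euler number is only a consistency check, and the two counts plus the exclusion of other fiber types are left as the ``main obstacle''---but that obstacle is essentially the whole content of the theorem, and the route you sketch (Prym degenerations over smooth members) starts from an incorrect picture of where the degenerations occur.
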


The singular fibers in this elliptic fibration agree with a natural elliptic fibration on the Kummer K3 surface $K_1(A)$ when $A$ is $(1,3)$-polarized---a necessary condition for $K_A(0,l,s)$ to be a fourfold. The K3 surface appears to be closely connected to the relative Jacobian of $K_1(A)\to \P^1$. 

We also describe the isolated points in the fixed-point locus using the Abel map for the curves in $|L|$.

\subsubsection*{Outline}

In Section~\ref{sec:backgroundmoduli}, we provide a brief introduction to moduli spaces of sheaves, and Kummer-type varieties arising from them, over arbitrary fields. In Section~\ref{sec:involutions}, we identify which automorphisms of $M_A(v)$ given by translation and tensoring by a degree 0 line bundle restrict to automorphisms of $K_A(v)$, and then show how these give rise to the description of the symplectic automorphisms discussed in Theorem~\ref{thm:introsymplectic}. We also begin the analysis of the fixed-point loci for the symplectic involutions. In Section~\ref{sec:midcohomology}, we study the middle cohomology of fourfolds $K_A(v)$, proving Theorem~\ref{thm:introcohom}. In Section~\ref{sec:derived}, we compare our results to questions about derived equivalences between abelian surfaces and their generalized Kummer fourfolds. Namely, we give criteria in Section~\ref{sec:r} for when a derived equivalence between abelian surfaces $A$ and $B$ induces an isomorphism between $G_A(v)$ and $G_B(v)$, and we prove Corollary~\ref{cor:intronotde} in Section~\ref{sec:de}.

The second half of the paper is dedicated to studying the fixed-point locus of $\iota^*$ for fourfolds $K_A(0,l,s)$ over $\C$, including the proof of Theorem~\ref{thm:introfibration}. In Section~\ref{sec:JacobiansK3}, we study the general geometry of $K_A(0,l,s)$ and the fixed-point locus, and then focus on the elliptic fibers of the K3 surface in Section~\ref{sec:Jacobianssingular}. In Section~\ref{sec:JacobiansIsolatedpts}, we describe the isolated points in the fixed-point locus.

\subsubsection*{Acknowledgments}

We heartily thank Nicolas Addington, for pointing us in the direction of this work, and Alexander Polishchuk, for fruitful conversations during the early stages of this project.
We also thank Nils Bruin, Maria Fox, Lie Fu, Zhiyuan Li, Eyal Markman, Martin Olsson, John Voight, and Chelsea Walton for helpful discussions.

During the preparation of this article, S.F.~was partially supported by NSF DMS-1745670, and K.H.~was partially supported by NSERC.

\subsubsection*{Notation}
We write the standard involution on an abelian surface $A$, the morphism multiplying by $-1$ in the group law of $A$, as $\iota \colon A\to A$. 
We write $K_n(A)$ for the generalized Kummer variety of dimension $2n$. In particular, we write $K_1(A)$ for the Kummer K3 surface of $A$. 

For a smooth projective variety $X$ over a field $k$, let $X_{\kbar}:=X\times_k \kbar$. We denote by $\widetilde{H}(X_{\kbar},\Z_\ell)$ the $\ell$-adic Mukai lattice of $X$, which is the direct sum of the even cohomology twisted into weight zero: 
\[\widetilde{H}(X_{\kbar},\Z_\ell):=\textstyle\bigoplus_{i=0}^{\dim X} H^{2i}_{\et}(X_{\kbar}, \Z_\ell(i)).\]
This lattice is given the usual Mukai pairing, e.g. for $X=A$ an abelian surface, $(\alpha, \beta) =-\alpha_0\beta_4+\alpha_2\beta_2-\alpha_4\beta_0$.
We will always assume that our Mukai vectors $v$ satisfy the conditions given in Setting~\ref{defKv}, unless indicated otherwise.

Throughout, $D(X)$ denotes the bounded derived category of coherent sheaves on $X$.

\section{Moduli spaces over arbitrary fields}
\label{sec:backgroundmoduli}
Let $A$ be an abelian surface defined over an arbitrary field $k$. 

\begin{defn}
Let $\omega \in H^4_{\et}(A_{\kbar}, \Z_\ell(2))$ be the numerical equivalence class of a point on $A_{\kbar}$. A \emph{Mukai vector on $A$} is an element of
$$N(A):=\Z\oplus \mathrm{NS}(A)\oplus \Z\omega,$$
where $N(A)$ is a subgroup of $\widetilde{H}(A_{\kbar}, \Z_\ell)$ under the natural inclusion.

Given a coherent sheaf $\fF$ on $A$, we assign to it a Mukai vector $v(\fF)\in N(A)$ given by its rank, the N\'eron-Severi class of its determinantal line bundle, and its Euler characteristic. We will write this as $v(\fF)=(r,l,s)$.
\end{defn}

By fixing a Mukai vector $v$ and a polarization $H$ on $A$, we can construct the moduli space $M_{A,H}(v)$ parametrizing $H$-semistable sheaves on $A$. We use the more compact notation $M_A(v)$. 
We ask that the Mukai vector satisfy the following conditions in order to ensure that the moduli space is nicely behaved, i.e.~is a non-empty, smooth, projective variety over~$k$.

\begin{defn}
\begin{enumerate}[(a)]  
\item A Mukai vector $v \in N(A)$ is \emph{geometrically primitive} if its image under $N(A) \to N(A_{\kbar})$ is primitive in the lattice.

\item A Mukai vector $(r,l,s)$ is \emph{positive} if one of the following is satisfied:
\begin{enumerate}[(i)]
\item  $r>0$
\item  $r=0$, $l$ is effective and $s\neq 0$ 
\item  $r=0$, $l=0$ and $s<0$.
\end{enumerate}  

\item A polarization $H\in \mathrm{Pic}(A)$ is \emph{v-generic} if every $H$-semistable sheaf with Mukai vector $v$ defined over $\kbar$ is $H$-stable.
\end{enumerate}
\end{defn}

A polarization is often $v$-generic if it is not contained in a locally finite union of certain hyperplanes in $\NS(A_{\bar{k}})_{\R}$ defined in \cite[Def.\ 4.C.1]{HL}, but this is not always enough to ensure genericity (see for example, \cite[Ex.~1.7]{Frei}).

When $v^2=0$ and $H$ is $v$-generic, Mukai showed that $M_H(v)$ is an abelian surface \cite[Rmk.~5.13]{Mukaiexposition}. 
We focus on the higher-dimensional case.

\begin{prop}
  \label{modulinice}
Let $v\in N(A)$ be a geometrically primitive and positive Mukai vector with $v^2\geq 2$, and let $H$ be a $v$-generic polarization on $A$. Then $M_A(v)$ is a non-empty, smooth, projective, geometrically irreducible variety of dimension $v^2+2$ over $k$.
\end{prop}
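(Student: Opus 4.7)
The plan is to reduce the statement to the well-studied geometric case over $\bar k$, where it follows from work of Mukai and Yoshioka, and then descend. Over $\bar k$, the moduli space $M_{A_{\bar k}}(v)$ of $H$-stable sheaves with positive primitive Mukai vector $v$ satisfying $v^2\geq 2$ is known to be non-empty, smooth, projective, and irreducible of dimension $v^2+2$ by \cite{Yoshioka} (together with the rank-zero cases handled separately in the same work). So the heart of the proposition is to argue these properties descend to $k$.

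First, I would construct $M_A(v)$ as a projective $k$-scheme, either as a GIT quotient of a Quot scheme (following Huybrechts--Lehn) or as the coarse moduli space of the moduli stack. The key input is that $H$ is $v$-generic, so over $\bar k$ there are no strictly semistable sheaves. This implies $M_A(v)$ is a fine (or nearly fine) moduli space for stable sheaves, and in particular its formation commutes with the base change $k\hookrightarrow \bar k$, giving an isomorphism $M_A(v)\times_k \bar k \cong M_{A_{\bar k}}(v)$.

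With this identification in hand, smoothness, properness, and geometric irreducibility all descend from the geometric fiber since they are fpqc-local on the base. Non-emptiness of the scheme $M_A(v)$ follows from non-emptiness of its base change to $\bar k$. For the dimension, I would compute the tangent space at a stable sheaf $\shF$ as $\Ext^1(\shF,\shF)$; using Mukai's Riemann--Roch formula $\chi(\shF,\shF)=-v^2$, Serre duality with $K_A\cong\shO_A$, and the fact that stability gives $\Hom(\shF,\shF)=k\cdot \id$ (hence $\Ext^2(\shF,\shF)\cong k$), we obtain $\dim_k \Ext^1(\shF,\shF)=v^2+2$, and combined with smoothness this is the dimension of $M_A(v)$.

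The main obstacle is ensuring the geometric input from \cite{Yoshioka}---which is stated over $\C$---genuinely applies in our setting, in particular the non-emptiness and irreducibility in the rank-zero cases of the positivity condition. I would address this by appealing to Fu--Li \cite{FuLi}, who extend Yoshioka's results to the arithmetic setting required here, or by spreading out and specializing from characteristic zero. A secondary (minor) subtlety is checking that the specific construction of $M_A(v)$ used gives a coarse moduli space whose base change agrees with the corresponding construction over $\bar k$; this is standard once stability is open and preserved under base change, both of which follow from $v$-genericity.
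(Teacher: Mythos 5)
Your proposal follows essentially the same route as the paper: both reduce to the known results over algebraically closed fields (Mukai's smoothness and dimension formula, Yoshioka's non-emptiness and the identification of the dimension as $v^2+2$), invoke Fu--Li for the construction and lifting over arbitrary fields, and descend the properties along $k\hookrightarrow\bar k$. The only notable difference is bookkeeping: the paper handles geometric irreducibility by citing Kaledin--Lehn--Sorger (whose proof works over any algebraically closed field) rather than your spreading-out/specialization fallback, and it relies on Langer's construction of the moduli space in positive characteristic where Huybrechts--Lehn alone would not suffice, but these are citation-level refinements of the same argument rather than a different proof.
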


\begin{proof}
The projectivity and smoothness are shown in \cite[Prop.~6.9]{FuLi},
which relies
on classic results in \cite{Mukaisymp} as well as \cite{La} for the construction of moduli spaces of semistable sheaves over arbitrary fields.
Geometric irreducibility of $M_A(v)$ follows from \cite[Thm.~4.1]{KLS} (note that the authors work over $\C$, but their proof holds for any algebraically closed field).
Finally, the dimension claim follows from \cite[Cor.~0.2]{Mukaisymp} once we know $M_A(v)$ is non-empty; non-emptiness is a consequence of \cite[Thm.~0.1]{Yoshioka} along with a lifting argument as in \cite[Prop.~6.9]{FuLi} when the field has positive characteristic.
\end{proof}

Let $v:=(r,l,s)$ be a Mukai vector as in Proposition~\ref{modulinice} and let
\[\Phi_P\colon D(A)\to D(\hat{A})
  \]
denote \textit{the Fourier--Mukai transform} on $A$, which has kernel the Poincar\'e bundle $P$ on $A \times \hat{A}$.
In \cite[Thm.~4.1]{Yoshioka}, Yoshioka proves over $\C$ that the Albanese variety of $M_H(v)$ is $A\times\hat{A}$ and fixing any $\fF_0\in M_H(v)$, we define the Albanese morphism as follows:
\begin{align}\label{albvar}
M_A(v) &\to \hat{A}\times A\\
\fF  &\mapsto (\det(\fF)\otimes \det(\fF_0)^{-1},
\det(\Phi_P(\fF))\otimes \det(\Phi_P(\fF_0))^{-1})\notag
\end{align}
This construction also shows that over an arbitrary field $k$,
the following map gives the
Albanese torsor  of $M_H(v)$: 
\begin{align}\label{albtor}
\alb\colon M_A(v) &\to \Pic^l_A\times\Pic^{m}_{\hat{A}}\\
\fF  &\mapsto (\det(\fF),\det(\Phi_P(\fF))),\notag
\end{align}
where $m$ is the N\'eron-Severi class in the Mukai vector $\Phi_P(v):=(s,m,r)$, which is the negative of the Poincar\'e dual of $l$ by \cite[Prop.~1.17]{MukaiFF}.

\begin{setting}\label{defKv}
Let $A$ be an abelian surface defined over a field $k$.  
Let $v:=(r,l,s)\in N(A)$ be a geometrically primitive and positive Mukai vector with $v^2\geq 6$ and $\Char k \nmid \frac{v^2}{2}$. Let $H$ be a $v$-generic polarization on $A$.
Fix $(L,M)$ a pair of line bundles in $\Pic^l(A)\times\Pic^{m}(\hat{A})$. 
Let $K_A(v)$ be the fiber of $\alb$ over $(L,M)$.
\end{setting}

Over $\C$, \cite[Thm.~0.2]{Yoshioka} shows that $K_A(v)$ is a hyperh\"ahler variety, and the following result generalizes this to other fields.

\begin{prop}[{{\cite[Thm.~0.2]{Yoshioka}, \cite[Prop.~6.9]{FuLi}}}]\label{prop:Kvsmooth}
Suppose we have data as in Setting~\ref{defKv}. 
Then $K_A(v)$ is a smooth, projective symplectic variety of dimension $v^2-2$ and is deformation equivalent to the generalized Kummer variety $K_{(v^2-2)/2}(A)$.
\end{prop}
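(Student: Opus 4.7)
The plan is to obtain the statement by combining Proposition~\ref{modulinice} with an analysis of the Albanese morphism $\alb$ from \eqref{albtor} and then to cite Yoshioka's deformation-equivalence theorem together with a lifting argument in positive characteristic. The essential setup is that $M_A(v)$ is smooth, projective, and of dimension $v^2+2$ over $k$ by Proposition~\ref{modulinice}, and the Albanese torsor $\Pic^l_A\times\Pic^m_{\hat A}$ is a torsor under the $4$-dimensional abelian variety $A\times\hat A$. So on dimensional grounds, once $\alb$ is smooth and surjective, each fiber---in particular $K_A(v)$---is smooth, projective, and of dimension $v^2-2$.

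First I would verify surjectivity and smoothness of $\alb$. Surjectivity follows over $\C$ from \cite[Thm.~4.1]{Yoshioka}, and over arbitrary $k$ by faithfully flat descent after base change to $\bar k$ (combined with a lifting argument of the sort used in \cite[Prop.~6.9]{FuLi} in positive characteristic). For smoothness, I would identify the differential of $\alb$ at a stable sheaf $\fF$ with the pair of trace maps
\[
\Ext^1(\fF,\fF) \;\longrightarrow\; H^1(A,\O_A)\oplus H^1(\hat A,\O_{\hat A})
\]
(the second being the trace of the Fourier--Mukai transform $\Phi_P\fF$) and check that these are surjective. This surjectivity is essentially dual to the fact that $H^0(A,\O_A)\oplus H^0(\hat A,\O_{\hat A})\to \Hom(\fF,\fF)$ is injective, which holds because $\fF$ is simple ($H$-stable and geometrically so by $v$-genericity). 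This pointwise surjectivity makes $\alb$ smooth, and all fibers, including $K_A(v)$, are smooth of the expected dimension $v^2-2$ and projective as fibers of a projective morphism.

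Next I would produce the symplectic structure. The Mukai symplectic form on $M_A(v)$ constructed via $\Ext^1(\fF,\fF)\times\Ext^1(\fF,\fF)\to\Ext^2(\fF,\fF)\xrightarrow{\mathrm{tr}} k$ has radical exactly the kernel of $d\,\alb$; hence it descends to a nondegenerate $2$-form on every Albanese fiber, giving $K_A(v)$ a symplectic structure. This argument is characteristic-free thanks to the assumption $\Char k\nmid v^2/2$ built into Setting~\ref{defKv}, which keeps the relevant trace pairings nondegenerate.

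Finally, for deformation equivalence to $K_{(v^2-2)/2}(A)$: over $\C$ this is precisely \cite[Thm.~0.2]{Yoshioka}, whose proof interpolates through families of Mukai vectors and $v$-generic polarizations connecting $M_A(v)$ with a Hilbert scheme $A^{[n+1]}$ and then restricts to Albanese fibers. Over a general field of characteristic zero I would base change to $\C$ to apply this directly; for positive characteristic, I would imitate the lifting argument in \cite[Prop.~6.9]{FuLi}, lifting $(A,H,v)$ to a mixed-characteristic DVR and then to $\C$, and transporting the deformation equivalence back by specialization via smoothness and properness of the relative moduli space. The principal obstacle is this last step: ensuring that the lifts behave well with respect to stability, $v$-genericity, and the Albanese map, so that the specialization of a $\C$-deformation indeed recovers $K_A(v)$ in characteristic $p$.
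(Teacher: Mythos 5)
The paper does not actually prove this proposition: it is quoted verbatim from \cite[Thm.~0.2]{Yoshioka} (over $\C$) and \cite[Prop.~6.9]{FuLi} (arbitrary fields, via a lifting argument), so your top-level plan of deferring the deformation equivalence and the positive-characteristic case to those sources matches the paper's treatment exactly. The difficulties lie in the two intermediate steps you work out yourself. For smoothness of $\alb$, your duality claim is off by a cohomological degree: the Serre dual of the combined trace map $\operatorname{Ext}^1(\fF,\fF)\to H^1(A,\O_A)\oplus H^1(\hat{A},\O_{\hat{A}})$ is the map $H^1(A,\O_A)\oplus H^1(\hat{A},\O_{\hat{A}})\to \operatorname{Ext}^1(\fF,\fF)$, i.e.\ the differential of the orbit map $(x,y)\mapsto L_y\otimes t_x^*\fF$, not the degree-zero map $H^0(A,\O_A)\oplus H^0(\hat{A},\O_{\hat{A}})\to\Hom(\fF,\fF)$. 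Simplicity of $\fF$ only gives surjectivity of the trace on $\operatorname{Ext}^2$ (which is what makes $M_A(v)$ itself smooth, already used in Proposition~\ref{modulinice}); injectivity of the degree-one map is a genuine geometric statement about finiteness of the stabilizer of $\fF$ under the $A\times\hat{A}$-action---in effect the isogeny $\phi\circ\psi=[-n]$ computed in the proof of Theorem~\ref{n4}---and does not follow formally from simplicity.

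The symplectic step is internally inconsistent as written: if the radical of the Mukai form on $M_A(v)$ were $\ker d\,\alb$, then the form would vanish identically on the Albanese fibers, the opposite of what you want. In fact the Mukai form on $M_A(v)$ is nondegenerate, and the point requiring proof is that its restriction to each fiber stays nondegenerate, e.g.\ by showing that $\operatorname{Ext}^1(\fF,\fF)$ decomposes as an orthogonal direct sum of $\ker d\,\alb$ and the tangent space to the $A\times\hat{A}$-orbit; this is part of what Yoshioka establishes and is not automatic. Relatedly, the hypothesis $\Char k\nmid \frac{v^2}{2}$ is not about keeping trace pairings nondegenerate; in \cite{FuLi} it enters through the \'etale/isogeny considerations and the lifting argument. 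Your outline for positive characteristic (lift $(A,H,v)$ to a DVR, apply the complex result, specialize) is the right shape and is what \cite[Prop.~6.9]{FuLi} carries out, but as you note yourself, compatibility of the lift with stability, $v$-genericity, and the Albanese fibration is precisely the content of that citation rather than something your sketch supplies.
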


For $K_A(v)$ over a field of characteristic zero, which we may assume is a subfield of $\C$, $K_A(v)_\C$ is a hyperk\"ahler variety. 
In positive characteristic, Fu and Li \cite[Def.~3.1]{FuLi} define a symplectic variety $X$ to be a smooth connected variety where $\pi_1^{\et}(X)=0$ and $X$ admits a nowhere degenerate closed algebraic $2$-form.

We are interested in symplectic involutions on $K_A(v)$. We will show in Theorem \ref{thm:symplectic} that these all involve
the induced action of the standard involution $\iota$ on $A$. 
Pullback $\iota^*$ sends degree $0$ line bundles on $A$ to their inverses. For any line bundle $\L\in\Pic(A)$,
the multiplication by $n$ map has the property that $[n]^*\L\cong \L^{n^2}\otimes M$ for some $M\in \Pic^0(A)$.
Thus $\L$ and~$\iota^*\L$ differ by a degree $0$ line bundle, so are always in the same N\'eron-Severi class.

In order for $\iota^*$ to give a well-defined morphism on $K_A(v)$,
$K_A(v)$ must be a fiber of the Albanese morphism over a pair of symmetric line bundles $L$ and $M$, which we prefer to do when possible for notational simplicity.
In the case of generalized Kummer varieties $K_{n-1}(A)$ or varieties $K_A(v)$ whose Mukai vector has trivial N\'eron-Severi class, it is always possible to choose the fiber over the structure sheaves of $A$ and $\hat{A}$.
For other choices of Mukai vector,
we show in Lemma~\ref{symmetric} below that
over an algebraically closed field we may always
choose such a pair of symmetric line bundles.

\begin{lemma}\label{symmetric}
Let $A$ be an abelian variety over an algebraically closed field~$k$. Then any class in $\NS(A)$ has a symmetric representative. Moreover, there is a short exact sequence of the following form, where $\Pic^{\sym}(A)$ is the subgroup of all symmetric line bundles:
  \[
0\to \Pic^0(A)[2]\to \Pic^{\sym}(A)\to \NS(A)\to 0.
  \]    
\end{lemma}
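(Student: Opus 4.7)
The plan is to prove the two statements by explicit construction and a short computation, using two basic facts: that $\iota^*$ acts trivially on $\NS(A)$ and as inversion on $\Pic^0(A)$, and that $\Pic^0(A)(k)$ is a divisible group when $k$ is algebraically closed (since $A$ is an abelian variety and the multiplication-by-$n$ isogeny is surjective on $k$-points).

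For the identification of the kernel, I would argue as follows. An element $\L$ of $\ker(\Pic^{\sym}(A)\to \NS(A))$ lies in $\Pic^0(A)$ and satisfies $\iota^*\L\cong \L$. Since $\iota^*$ acts as inversion on $\Pic^0(A)$, we get $\L^{-1}\cong \L$, i.e.\ $\L^{\otimes 2}\cong \shO_A$, so $\L\in\Pic^0(A)[2]$. Conversely, any $\L\in\Pic^0(A)[2]$ is manifestly symmetric and has trivial N\'eron--Severi class, so lies in the kernel.

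For surjectivity, pick any representative $\L$ of a given class in $\NS(A)$. Because $\iota^*$ acts trivially on $\NS(A)$, the line bundle $N:=\iota^*\L\otimes \L^{-1}$ lies in $\Pic^0(A)$. I then look for $M\in\Pic^0(A)$ such that $\L':=\L\otimes M$ is symmetric. Computing
\[
\iota^*\L'\otimes (\L')^{-1}= \iota^*\L\otimes \iota^*M\otimes \L^{-1}\otimes M^{-1}= N\otimes M^{-2},
\]
we see that the condition $\iota^*\L'\cong \L'$ is equivalent to $M^{\otimes 2}\cong N$. Divisibility of $\Pic^0(A)(k)$ produces such an $M$, and then $\L'$ is a symmetric representative of the original class.

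Combining these two steps gives the exact sequence as stated. The only step that requires any care is invoking divisibility of $\Pic^0(A)(k)$; this is where the hypothesis that $k$ is algebraically closed is used, and without it one would have to worry about the obstruction in $H^1$ of the Galois group with coefficients in $\Pic^0(A)[2]$. All other ingredients are formal consequences of the behavior of $\iota^*$ on $\Pic(A)$.
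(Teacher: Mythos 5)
Your proof is correct, and it is essentially the paper's argument made explicit: the paper takes $\iota^*$-invariants of the sequence $0\to\Pic^0(A)\to\Pic(A)\to\NS(A)\to 0$ and concludes via the vanishing of $H^1(\Z/2\Z,\Pic^0(A))$, which rests on the same divisibility of $\Pic^0(A)(k)$ you invoke. Your identification of the kernel with $\Pic^0(A)[2]$ and your construction of a symmetric representative by twisting with a square root of $\iota^*\L\otimes\L^{-1}$ are exactly the unwound versions of the $H^0$ and $H^1$ steps in that long exact sequence.
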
  

\begin{proof}
The action of $\iota^*$ on the following short exact sequence
\[
0\to\Pic^0(A)\to \Pic(A)\to \NS(A)\to 0
\]    
gives rise to the long exact sequence
\[
0\to\Pic^0(A)[2]\to \Pic(A)^{\sym}\to \NS(A)\to H^1(\Z/2\Z, \Pic^0(A))\to \cdots,
\]
where $\NS^{\sym}(A)=\NS(A)$
since, for any line bundle $\L$, $\iota^*\L$ is in the same N\'eron-Severi class as $\L$. 
The group $H^1(\Z/2\Z, \Pic^0(A))$ is trivial since crossed homomorphisms correspond to elements in $\Pic^0(A)$ and principal crossed homomorphisms correspond to choices of element in $\Pic^0(A)$ that have a square root, which is all of them, since we are working over an algebraically closed field.
\end{proof}  

The proof above requires the field $k$ to be algebraically closed, but we will often work over a non-closed field. In that case, the existence of a symmetric line bundle in a given N\'eron-Severi class is not guaranteed. 
Rather than working over a finite extension of the ground field in order to acquire a symmetric bundle, we will simply alter $\iota^*$ by a correction factor to get an associated involution on $K_A(v)$ (see Construction~\ref{kappa}).

\section{Symplectic involutions on $K_A(v)$}
\label{sec:involutions}

In \cite[Cor.~5(2)]{HigherEn}, the authors show that, for $X=K_{n-1}(A)$ over $\C$, the kernel of 
\[\nu\colon \Aut X \to \mathrm{O}(H^2(X,\Z))\]
is isomorphic to $\Z/2\Z \ltimes (\Z/n\Z)^4$, generated by $\iota$ and translation by elements of $A[n]$.
In fact, this group of automorphisms is isomorphic to $\Z/2\Z \ltimes (\Z/n\Z)^4$ for any hyperk\"ahler variety $X$ of $\Kum_{n-1}$-type, since it is a deformation invariant \cite[Thm.~2.1]{HasTsc}.
Moreover, when $\dim X=4$, $\ker \nu$ contains all of the symplectic involutions
\cite[Thm.~7.5(i)]{KapferMenet}.
Markman identifies a subgroup $\Gamma_v\cong (\Z/n\Z)^4$ of
$\ker \nu$ when $X=K_A(v)$ as coming from the kernel of $\phi$ defined below
\cite[\S10.1]{MaKummers}.
In this section, we give an explicit description of $\ker \nu$ for $K_{A}(v)_{\bar{k}}$ when we are in the more general Setting~\ref{defKv} and $k$ is arbitrary; this will allow us to understand the action of the Galois group on the fixed-point loci of the involutions in $\ker \nu$.

In Section~\ref{sec:automorphisms}, we identify which automorphisms of $M_A(v)$ given by translation and tensoring by a degree 0 line bundle restrict to automorphisms of $K_A(v)$ and show they form a group isomorphic to $(\Z/n\Z)^4$. We also identify the group of such automorphisms
when $v$ is not primitive.
The other automorphism needed to generate $\ker \nu$
is $\iota^*$ when $K_A(v)$ is the Albanese fiber over symmetric line bundles; in Section~\ref{sec:alt_iota}, we
produce an involution $\kappa$ to replace $\iota^*$ in the more general setting. We then study the fixed loci of the compositions of $\kappa$ with the automorphisms produced in Section~\ref{sec:automorphisms}.
In Section~\ref{sec:symplectic}, we show 
that 
these compositions are symplectic and act trivially on $H^2(K_A(v),\Z)$.

\subsection{Automorphisms from translation and tensor}
\label{sec:automorphisms}

In this section we work with data as in Setting~\ref{defKv} with the additional assumption that $k$ is an algebraically closed field,
and we define $n:=\frac{v^2}{2}$.
Because $k=\bar{k}$ and $\Char k \nmid n$, we have $A[n]\cong (\Z/n\Z)^{4}$. 

We recall that given a line bundle $\mathcal{L}\in \Pic(A)$,
$\phi_{\L}\colon A\to \hat{A}$ is defined by $\phi_{\L}(x):=t_x^*\L\otimes \L^{-1}$, where $t_x\colon A\to A$ is translation by a point $x\in A$. 
We denote by 
$L_y\in \Pic^0(A)$ the line bundle corresponding to a point $y\in \hat{A}$.
Note that $\phi_{\L}$ is dependent only on the N\'eron-Severi class of $\L$, so we will use the notation $\phi_{[\L]}$.

Pullback by the translation map and tensoring by degree~$0$ line bundles give automorphisms of $M_A(v)$, and we are interested in when these automorphisms respect the Albanese morphism. That is, we identify in Theorem~\ref{n4} below which of the $L_y\otimes t_x^* \in \Aut M_A(v)$
restrict to automorphisms of $K_A(v)$. 

\begin{thm}\label{n4}
Let $v$ be a Mukai vector as in Setting~\ref{defKv}.  
There are exactly $n^4$ elements
$(x,y)\in A\times\hat{A}$
for which the automorphism $L_y\otimes t_x^*$ on $M_A(v)$
restricts to an automorphism on $K_A(v)$.
These elements form a subgroup $$G_A(v)\leqslant (A\times\hat{A})[n],$$ whose set of $k$-points is
isomorphic to $(\Z/n\Z)^4$.

The elements of $G_A(v)$ are the solutions
to the following equations on $\hat{A}$ and $A$, where $l$ and $m$ are the N\'eron-Severi classes of $L$ and $M$:
\begin{equation}\label{LM}
\phi_l(x)=-ry   \quad\text{and}\quad   \phi_m(y)=sx.
\end{equation}
Equivalently, $G_A(v)$ is the kernel of the following isogeny:
\begin{align}\label{phiisogeny}
  \phi\colon A\times\hat{A} &\to A\times\hat{A} 
  \\
(x,y)&\mapsto (\phi_m(y)-sx,\phi_l(x)+ry).\notag
\end{align}
\end{thm}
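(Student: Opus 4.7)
The plan is to determine when an automorphism $L_y \otimes t_x^* \in \Aut M_A(v)$ descends to $K_A(v)$ by examining how it acts on each component of the Albanese morphism \eqref{albtor}, $\alb(\fF) = (\det \fF,\, \det \Phi_P(\fF))$; since $K_A(v)$ is the fiber over the basepoint $(L,M)$, the automorphism preserves $K_A(v)$ if and only if both components of $\alb$ are unchanged. For the first component one computes directly
\[
\det\!\bigl(L_y \otimes t_x^*\fF\bigr) \cong L_y^{\otimes r} \otimes t_x^*(\det\fF),
\]
and, using the defining property $t_x^* L \otimes L^{-1} = L_{\phi_l(x)}$ of $\phi_l$, invariance becomes the equation $\phi_l(x) + ry = 0$ in $\hat A$.

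For the second component I would invoke the standard Fourier--Mukai intertwining identities: tensoring by $L_y$ on $A$ corresponds to translating by $y$ on $\hat A$ under $\Phi_P$, and translating by $x$ on $A$ corresponds to tensoring by the degree-zero line bundle on $\hat A$ determined by $x \in A = \hat{\hat A}$. Combining these, computing the determinant on $\hat A$, and recalling that $\Phi_P(\fF)$ has rank $s$ from $\Phi_P(v) = (s,m,r)$, the parallel invariance condition on the second component unfolds to $\phi_m(y) = sx$. The two equations together are exactly \eqref{LM}, equivalently $(x,y) \in \ker\phi$, so $G_A(v)$ is automatically a subgroup of $A \times \hat A$.

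It remains to show $|G_A(v)| = n^4$, that $G_A(v) \subseteq (A \times \hat A)[n]$, and that $G_A(v) \cong (\Z/n\Z)^4$. My approach here is to exploit the principal polarization on $A \times \hat A$ given by the Poincar\'e bundle, which identifies $(A \times \hat A)^\vee \cong A \times \hat A$ and produces a dual isogeny $\phi^\vee$. Combining the Poincar\'e-dual relation between $m$ and $l$ noted after \eqref{albtor} with the self-pairing identity $v^2 = l^2 - 2rs = 2n$ should yield an identity of the form $\phi^\vee \circ \phi = n \cdot \id_{A \times \hat A}$. This simultaneously shows $G_A(v) \subseteq (A \times \hat A)[n]$ and, since $\deg\phi \cdot \deg\phi^\vee = \deg(n \cdot \id) = n^8$ with $\deg\phi = \deg\phi^\vee$, gives $|G_A(v)| = n^4$. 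Finally, $G_A(v) \cong (\Z/n\Z)^4$ will follow from $G_A(v)$ being a Lagrangian subgroup of $(A \times \hat A)[n]$ for the Weil pairing induced by the Poincar\'e bundle, since any such Lagrangian is isomorphic to $(\Z/n\Z)^4$ by elementary symplectic linear algebra over $\Z/n\Z$.

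The main obstacle will be verifying the identity $\phi^\vee \circ \phi = n \cdot \id$, which requires a careful block-matrix calculation in $\Hom(A \times \hat A, A \times \hat A)$ using the relationship between $\phi_l$ and $\phi_m$ forced by $l^2 - 2rs = 2n$; tracking the Fourier--Mukai sign conventions correctly when deriving the second equation in \eqref{LM} is the other delicate step.
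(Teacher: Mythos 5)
Your first two steps are essentially the paper's and are fine: applying $\det$ and $\det\Phi_P$ to $L_y\otimes t_x^*\fF$ yields exactly \eqref{LM}, and your identity $\phi^\vee\circ\phi=n\cdot\id$ (after transporting $\phi^\vee$ back through the principal polarization of $A\times\hat{A}$) is correct --- up to sign it is the paper's auxiliary map $\psi(x,y)=(\phi_m(y)-rx,\phi_l(x)+sy)$, and verifying it is the same computation with Lemma~\ref{yl} and $\chi=n+rs$ that the paper uses to get $\phi\circ\psi=\psi\circ\phi=[-n]$. Together with $\Char k\nmid n$, so that $\ker\phi\subseteq(A\times\hat{A})[n]$ is \'etale, this gives the containment in the $n$-torsion and the count $n^4$; this degree argument is in fact a cleaner route to the order than the paper's, which obtains the count only as a byproduct of the structure result.

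The gap is the final step. Even granting that $G_A(v)$ is maximal isotropic for the Weil pairing on $(A\times\hat{A})[n]$ (you assert this rather than prove it, though it does follow from $\phi^*\lambda=n\lambda$ and the theory of theta groups), the claimed piece of ``elementary symplectic linear algebra'' is false when $n$ is not squarefree: in $(\Z/4\Z)^2$ with the standard symplectic form, the subgroup generated by $2e_1,2e_2$ equals its own orthogonal complement and has the Lagrangian order $4$, yet it is isomorphic to $(\Z/2\Z)^2$, not $\Z/4\Z$. (For squarefree $n$ any subgroup of order $n^4$ killed by $n$ is automatically $(\Z/n\Z)^4$, so the symplectic input buys nothing; the entire difficulty of the theorem is the non-squarefree case.) The structural reason your approach cannot be repaired as stated is that it never uses geometric primitivity of $v$: as the paper records in the corollary following Example~\ref{gpexample}, for an imprimitive vector $v=dv_0$ the kernel of the very same isogeny has order $\left(\tfrac{v^2}{2}\right)^4$ and sits inside the $\tfrac{v^2}{2}$-torsion just as before, yet it is isomorphic to $(\Z/dn\Z)^4\times(\Z/d\Z)^4$ --- e.g.\ $(\Z/2\Z)^8$ when $v=2v_0$ with $v_0^2=2$ --- and not to $\left(\Z/\tfrac{v^2}{2}\Z\right)^4$. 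So order plus isotropy cannot determine the isomorphism type. This is precisely what the paper's Claim~\ref{beef} supplies, via the prime-by-prime case analysis resting on Claims~\ref{kernel} and~\ref{helper}, where primitivity enters explicitly ($r$ and $s$ coprime in Case~1, $p\nmid n_1$ in the later cases); your proof needs an argument of that kind, or some other genuine use of primitivity, to conclude $G_A(v)\cong(\Z/n\Z)^4$.
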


The proof of Theorem~\ref{n4} requires analysis of $\phi_l$ and $\phi_m$.
We will crucially need the following lemma:

\begin{lemma}[{{Yoshioka \cite[Lem.~4.2]{Yoshioka}}}]\label{yl}
$$\phi_m\circ\phi_l=-\chi \cdot 1_A \quad\text{and}\quad \phi_l\circ\phi_m=-\chi \cdot 1_{\hat{A}},$$
where $\chi:=\chi(L)=\chi(M)=\frac{l^2}{2}=n+rs$.
\end{lemma}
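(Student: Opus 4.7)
My plan is to derive both identities from Mukai's theorem on Fourier--Mukai transforms of non-degenerate line bundles, which relates the polarization map $\phi_L$ of a line bundle $L$ to that of its Fourier--Mukai dual.

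First, by the discussion following \eqref{albtor}, the N\'eron--Severi class $m$ of $M$ equals $-\widehat{l}$, the negative of the Poincar\'e dual of $l$ on $\hat A$. Since $\phi_\bullet$ depends only on, and is $\Z$-linear in, the N\'eron--Severi class, this gives $\phi_m = -\phi_{\widehat{l}}$ as maps $\hat A \to A$ (using the canonical identification $\hat{\hat A} = A$).

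Second, I would apply Mukai's duality formula: for a non-degenerate line bundle $L$ on an abelian variety of dimension $g$, one has
$$\phi_{\widehat{l}}\circ \phi_l = (-1)^g \chi(L)\cdot 1_A.$$
For an abelian surface ($g = 2$) this reads $\phi_{\widehat{l}}\circ\phi_l = \chi(L)\cdot 1_A$, and combining with the previous step produces the first claim $\phi_m\circ\phi_l = -\chi(L)\cdot 1_A$. The second identity $\phi_l\circ\phi_m = -\chi\cdot 1_{\hat A}$ then follows by the symmetric argument on $\hat A$: the biduality $\hat{\hat A}=A$ renders Fourier--Mukai involutive and places $(M,L)$ in the same relationship to $\hat A$ as $(L,M)$ to $A$.

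Finally, $\chi(L) = l^2/2$ is Riemann--Roch for a line bundle on an abelian surface, and $l^2/2 = n + rs$ follows from $v^2 = l^2 - 2rs$ together with $n = v^2/2$. The equality $\chi(L) = \chi(M)$ follows from $m^2 = l^2$, which in turn comes from the fact that $\Phi_P$ preserves the Mukai pairing (so $(s,m,r)^2 = v^2$). The main obstacle is to reconcile sign conventions: between the sign of the Poincar\'e bundle, of the FM transform, of the identification $\hat{\hat A}\cong A$, and of the $(-1)^g$ in Mukai's duality formula. I would pin these down by passing to the cohomological level and computing the action on $H^2$ via the Chern character of the Poincar\'e bundle, where every sign becomes an explicit polynomial calculation, and checking that the combined sign is exactly $-1$ rather than $+1$.
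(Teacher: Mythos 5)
The paper offers no proof of this lemma at all: it is imported wholesale from Yoshioka \cite[Lemma~4.2]{Yoshioka}, so your proposal is being measured against a citation rather than an internal argument. As a free-standing proof, your outline is essentially correct: $m$ is by definition the degree-two component of $\Phi_P(v)$, which is $-\hat{l}$ by \cite[Prop.~1.17]{MukaiFF}; $\phi_\bullet$ is additive in the N\'eron--Severi class, so $\phi_m=-\phi_{\hat{l}}$; for $g=2$ the duality relation gives $\phi_{\hat{l}}\circ\phi_l=\chi\cdot 1_A$, which yields the first identity, and the second follows formally (even more cheaply than by rerunning the argument on $\hat{A}$: since each $\phi$ is a symmetric homomorphism, $\widehat{\phi_m\circ\phi_l}=\phi_l\circ\phi_m$, so dualizing $-\chi\cdot 1_A$ gives $-\chi\cdot 1_{\hat{A}}$). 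The numerical identities $\chi=l^2/2=n+rs$ and $\chi(L)=\chi(M)$ via Riemann--Roch and invariance of the Mukai pairing under $\Phi_P$ are handled correctly.

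Two caveats, both repairable. (i) The entire content of the lemma is the sign, and you delegate it to a formula $\phi_{\hat{l}}\circ\phi_l=(-1)^g\chi\cdot 1_A$ whose meaning depends on how ``Poincar\'e dual'' is normalized; your plan to pin this down by computing $\Phi_P$ on $H^2$ from $\mathrm{ch}(P)$ does work, but a convention-free shortcut exists: for $L$ ample, Mukai's formula $\phi_L^*\Phi_P(L)\cong H^0(A,L)\otimes L^{-1}$ gives $\phi_L^*m=-\chi\, l$ in $\NS(A)$, and then $\phi_{\phi_L^*m}=\widehat{\phi_L}\circ\phi_m\circ\phi_L=\phi_L\circ\phi_m\circ\phi_L=-\chi\,\phi_L$, so cancelling the isogeny $\phi_L$ yields $\phi_m\circ\phi_l=-\chi\cdot 1_A$ with no orientation choices at all. (ii) Setting~\ref{defKv} does not force $L$ to be nondegenerate: $v=(1,0,-n-1)$ has $l=0$, and for $r>0$ the class $l$ may be isotropic, so ``Mukai's duality formula for a non-degenerate line bundle'' does not literally cover every case of the lemma. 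These cases are either trivial ($l=0$, $\chi=0$) or follow from the cohomological computation you propose---on a rank-four lattice the statement is the adjugate/Pfaffian identity for alternating forms, valid for degenerate forms as well---but the nondegeneracy hypothesis should be addressed explicitly rather than assumed.
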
  

Additionally, we recall that for any $\fF\in D(A)$,
 $$\Phi_P(t_x^*\fF)=L_{-x}\otimes\Phi_P(\fF)
 \quad\text{and}\quad
 \Phi_P(\fF\otimes L_y)=t_y^*\Phi_P(\fF).
 $$ 
This follows from \cite[(3.1)]{Mukai}. Though the statement is not quite identical to the one we give here, it immediately follows from biduality of the Poincar\'e bundle \cite[9.12]{HuyFMbook}.

\begin{proof}[Proof of Theorem~\ref{n4}]
The main issue in this proof is that maps of the form $L_y\otimes t_x^*$ are not in general well-defined as automorphisms on $K_A(v)$.   
Given $\fF\in K_A(v)$, $L_y\otimes t_x^*\fF$ has the same Mukai vector as $\fF$, but may not have the same image under the Albanese morphism. For instance pullback by $t_x^*$ in general preserves N\'eron--Severi classes of line bundles, and acts trivially on the structure sheaf, but it does not act trivially on all line bundles.

We therefore seek the $(x,y)\in A\times\hat{A}$ that satisfy the following conditions:
\begin{align*}
L&=\det(\fF)=\det(L_y\otimes t_x^*\fF)=L_y^{\otimes r}\otimes t_x^*\det(\fF)=
   L_y^{\otimes r}\otimes t_x^*L\\
  M&=\det(\Phi_P(\fF))=\det(\Phi_P(L_y\otimes t_x^*(\fF)))=
     \det(t_y^*(L_{-x}\otimes \Phi_P(\fF)))\\
  &\phantom{{}=\det(\Phi_P(\fF))}=
     t_y^*(L_{-x}^{\otimes s}\otimes\det(\Phi_P(\fF)))
       =t_y^*(L_{-x}^{\otimes s}\otimes M ) 
       =L_{-x}^{\otimes s}\otimes t_y^* M.
\end{align*} 
We may rewrite these conditions as the equations~\eqref{LM}. Equivalently, 
these $(x,y)$ are the kernel of the map $\phi$ in~\eqref{phiisogeny}.

Precomposing the map $\phi$ with
$\psi\colon A\times\hat{A} \to A\times\hat{A}$, where
$\psi(x,y)= (\phi_m(y)-rx,\phi_l(x)+sy)$, and applying Lemma~\ref{yl}, we have

\begin{align*}
\phi\circ\psi(x,y)&=
\phi\circ(\phi_m(y)-rx,\phi_l(x)+sy)\\
&=(\phi_m(\phi_l(x)+sy)-s(\phi_m(y)-rx),\phi_l(\phi_m(y)-rx)+r(\phi_l(x)+sy))\\
&=(-\chi\cdot x+rsx,-\chi\cdot y+rsy)=-n(x,y).
\end{align*}
Thus $\phi\circ\psi=[-n]$, so $\phi$ is surjective, 
hence an isogeny.
Similarly, $\psi\circ\phi=[-n]$ and 
$G_A(v)\leqslant  (A\times\hat{A})[n]$.

We show
$G_A(v)\cong (\Z/n\Z)^4$ in 
Lemma~\ref{beef}. 
This will require an understanding of preimages of elements under $\phi_l$ and $\phi_m$, which we study in Claims~\ref{kernel} and~\ref{helper}.
\end{proof}

\begin{rmk}\label{respectalllinebundles}
Since the maps $\phi_l$ and $\phi_m$
are determined by the N\'eron--Severi classes of $L$ and $M$,
the proof of Theorem~\ref{n4} shows that the automorphisms
of $M_A(v)$ given by elements of $G_A(v)$
will restrict to automorphisms of not just one, but any, fiber of the Albanese morphism on $M_A(v)$.

Furthermore, for any $(x,y)\in (A\times\hat{A})[n]$,
the automorphism $L_y\otimes t_x^*$ induces a permutation of the Albanese fibers and if $(x,y)\notin G_A(v)$, this permutation does not have any fixed fibers.

If we extend the domain of $\det \times \det \Phi_P$ 
to elements of $D(A)$ with Mukai vector $v$ (by mapping to the Grothendieck group before taking determinants), $L_y\otimes t_x^*$ acts on the fibers of this map as well.
\end{rmk}  

Before proving Lemma~\ref{beef},
we need results on the kernels of $\phi_l$ and $\phi_m$:

\begin{claim}\label{kernel}
Let $p\neq \charr k$ be a prime and $\chi\neq 0$.
Suppose $p^q$ is the highest power of $p$ dividing $\chi$.
Then the group of $p$-power torsion points in $\ker\phi_l\cong \ker\phi_m$ is\[
(\Z/p^{n_1}\Z)^2\times (\Z/p^{n_2}\Z)^2,
\]
where $0\leq n_1\leq n_2$ and $n_1+n_2=q$.
If $n_1>0$, then
$L$ and $M$ are $p^{n_1}$-st powers of other line bundles.
\end{claim}

If $L$ and $M$ are separable, we may define their 
\textit{polarization type} to be the termwise product of pairs $(p^{n_1},p^{n_2})$ as $p$ varies over primes dividing $\chi$ (cf.~\cite[\S2]{BLseparable}).

\begin{proof}
Since $\ker \phi_l \cong \ker \phi_{-l}$ and $\chi(L) \neq 0$, we may assume that $L$ is ample. The proof of Riemann--Roch for abelian varieties in \cite[\S16]{Mumford} implies that the degree of $\phi_l$ is $\chi^2$. 
The structure of $\ker \phi_l \cap A[p^q]$ is then determined by
Lemma~\ref{yl} and 
the fact that the Weil pairing $e^L$ on the $p$-torsion is skew-symmetric \cite[\S20, Thm.~1]{Mumford}. 
Since
$\phi_m$ is the negative of the dual of $\phi_l$ \cite[\S2]{BLseparable}, the group structure of $p$-power torsion points in $\ker \phi_m$ is isomorphic to that in $\ker \phi_l$.
The last statement is a consequence of \cite[\S23, Thm.~3]{Mumford}.
\end{proof}

The images of any two elements of the same order under 
the compositions $\phi_l\circ\phi_m$ or $\phi_m\circ\phi_l$ will have the same order. However, $\phi_l$ and $\phi_m$ do not respect orders in this way.

\begin{claim}\label{helper}
Let $p\neq \Char k$ be a prime dividing $\chi$, and assume that $l$ and $m$ are not $p$-th multiples of other classes, so $n_1=0$.
Suppose $p^d\mid\chi$ for some $d\in \mathbb{N}$. 
\begin{enumerate}[(a)]
\item Suppose $u\in A[p^d]\cap \ker{\phi_l}$. Then the preimage of $u$ in $\hat{A}[p^d]$ under $\phi_m$ is of the form
  $b+(\Z/p^d\Z)^2$ for some $b\in \hat{A}[p^d]$.
\item Suppose $v\in \hat{A}[p^d]\cap \ker{\phi_m}$. Then the preimage of $v$ in $\hat{A}[p^d]$ vnder $\phi_l$ is of the form
  $a+(\Z/p^d\Z)^2$ for some $a\in {A}[p^d]$.
\end{enumerate}
Now, suppose $p^q=\chi$. 
\begin{enumerate}[(c)]  
\item Suppose $u\in A$ and $\phi_l(u)$ has order $p^c$.
  Then the preimage of $u$ in $\hat{A}$ under $\phi_m$ is of the form
  $b+(\Z/p^q\Z)^2$ for some $b\in\hat{A}[p^{c+q}]$.
\item[(d)] Suppose $v\in \hat{A}$ and $\phi_m(v)$ has order $p^c$.
  Then the preimage of $v$ in ${A}$ under $\phi_l$ is of the form
  $a+(\Z/p^q\Z)^2$ for some $b\in\hat{A}[p^{c+q}]$.  
\end{enumerate}
\end{claim}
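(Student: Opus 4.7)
The plan is to exploit the commutation relations $\phi_l\circ\phi_m = -\chi\cdot 1_{\hat{A}}$ and $\phi_m\circ\phi_l = -\chi\cdot 1_A$ from Lemma~\ref{yl}, together with the explicit description $\ker\phi_l \cong \ker\phi_m \cong (\Z/\chi\Z)^2$ under primitivity from Claim~\ref{kernel}, and run a counting argument on torsion. Throughout I would write $p^q \| \chi$, so that $d\leq q$ under the hypothesis $p^d \mid \chi$.

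For (a), the map $\phi_m$ is an isogeny, hence surjective, and the full preimage in $\hat{A}$ of any point of $A$ is a coset of $\ker\phi_m$. The key step is to study the restriction
\[
\phi_m\colon \hat{A}[p^d] \to A[p^d].
\]
Its kernel is $\ker\phi_m \cap \hat{A}[p^d] = (\Z/\chi\Z)^2[p^d] = (\Z/p^d\Z)^2$, using $p^d \mid \chi$ and Claim~\ref{kernel}. For any $v \in \hat{A}[p^d]$ the identity $\phi_l(\phi_m(v)) = -\chi v = 0$ places the image inside $\ker\phi_l \cap A[p^d] = (\Z/\chi\Z)^2[p^d]$, which has order $p^{2d}$. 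Since $|\hat{A}[p^d]|/p^{2d}=p^{2d}$ as well, this forces the restricted map to be surjective onto $\ker\phi_l \cap A[p^d]$. In particular, any $u\in A[p^d]\cap\ker\phi_l$ has a preimage $b\in\hat{A}[p^d]$, and
\[
\phi_m^{-1}(u)\cap\hat{A}[p^d] \;=\; b+(\Z/p^d\Z)^2.
\]
Part (b) follows by swapping the roles of $\phi_l$ and $\phi_m$.

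For (c), the preimage $\phi_m^{-1}(u)\subset \hat{A}$ is already a coset $b+\ker\phi_m = b+(\Z/p^q\Z)^2$ for any chosen preimage $b$. Applying Lemma~\ref{yl}, $\phi_l(u) = \phi_l(\phi_m(b)) = -\chi b = -p^q b$. Since $\phi_l(u)$ has order $p^c$, it follows that $p^{c+q}b = 0$, so $b\in \hat{A}[p^{c+q}]$; every representative of the coset lies in $\hat{A}[p^{c+q}]$ because $\ker\phi_m=(\Z/p^q\Z)^2\subset\hat{A}[p^q]\subset\hat{A}[p^{c+q}]$. Part (d) is symmetric.

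The delicate point is the order-counting in (a) and (b): one must use primitivity of $l$ and $m$ to pin down $\ker\phi_l$ and $\ker\phi_m$ as $(\Z/\chi\Z)^2$ (not a more general polarization type), so that intersecting with $p^d$-torsion gives subgroups of exactly the predicted size $(\Z/p^d\Z)^2$ whenever $p^d\mid\chi$. Only then do the orders of kernel and image match, giving surjectivity of the restriction onto $\ker\phi_l \cap A[p^d]$; otherwise one would get only a proper inclusion and the coset description would fail.
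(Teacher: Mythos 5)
Your proof is correct and follows essentially the same route as the paper: parts (a)--(b) via the counting argument that the restriction $\phi_m|_{\hat{A}[p^d]}$ has kernel $(\Z/p^d\Z)^2$ and image exactly $A[p^d]\cap\ker\phi_l$ (using Lemma~\ref{yl}, Claim~\ref{kernel}, and primitivity), and parts (c)--(d) by describing the full preimage as a coset of $\ker\phi_m\cong(\Z/p^q\Z)^2$ and bounding the order of the representative via $\phi_l\circ\phi_m=[-\chi]$. No gaps; your write-up simply spells out the coset and order bookkeeping slightly more explicitly than the paper does.
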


\begin{proof}
(a) By Lemma~\ref{yl},
the composition $\phi_l\circ\phi_m$ is given by multiplication by $-\chi$.
Thus, $\phi_m\circ\phi_l$ acts on $A[p^d]$ as the zero map, and hence:
$$\im \phi_m|_{\hat{A}[p^d]} \subseteq A[p^d]\cap \ker \phi_l.$$
By Claim~\ref{kernel}, $A[p^d]\cap \ker \phi_l$ has $p^{2d}$ elements and
$\phi_m$ acting on $\hat{A}[p^d]$ is a $p^{2d}$-to-$1$ map. It follows by counting that $\im \phi_m|_{\hat{A}[p^d]} = A[p^d]\cap \ker \phi_l$. By Claim~\ref{kernel}, the preimage of $u$ is as stated. 

Part (b) follows analogously.

(c) By Lemma~\ref{yl}, the preimage of $\phi_l(u)$ under $\phi_l\circ\phi_m$ consists of elements of order $p^{c+q}$. By Lemma~\ref{kernel}, the result follows.

Part (d) follows analogously.
\end{proof}

The following result is proved using a case-by-case argument. 
The explicit argument given has the advantage of aiding in the analysis of examples. See \cite[Lemma 10.1]{MaKummers} for an approach
using deformations over $\C$.

\begin{lemma}\label{beef}
  The solutions to the equations \eqref{LM} 
  form a group isomorphic to $(\Z/n\Z)^4
\leqslant (A\times\hat{A})[n]$.
\end{lemma}

\begin{proof}
\medskip\noindent\textit{Case 1: $\chi=0$.}

Both $L$ and $M$ must have degree $0$, 
so $\phi_l$ and $\phi_m$ are both the $0$-morphism. The equations \eqref{LM} simplify to:
\[0=-ry \quad\text{and}\quad 0=sx
\]
Furthermore, $n=-rs$. Since $v=(r,l,s)$ is positive and $v^2\geq 4$, we must have $r>0$ and $s<0$.
The solutions consist of all products of $|s|$-torsion points on $A$ and
$r$-torsion points on $\hat{A}$.

The group of solutions is isomorphic to $(\Z/r\Z)^4 \times (\Z/|s|\Z)^4$, hence
$(\Z/n\Z)^4$ since in this case, primitivity of the Mukai vector implies $r$ and $s$ are relatively prime.

\medskip

Now, let $p$ be a prime divisor of $n$ and $p^q$ be the highest power of $p$ dividing $n$.
We treat the remaining cases by analyzing solutions in $(A\times\hat{A})[p^q]$. We may then conclude by using the Sun Zi Remainder Theorem.

\medskip\noindent\textit{Case 2: $\chi\neq 0$ and at least one of $r$ or $s$ is relatively prime with $p$.}

Suppose $r$ is relatively prime with $p$.
Fix an arbitrary $x\in A[p^q]$.
The equation $\phi_l(x)=-ry$ then has exactly one solution $y$ because multiplication by $-r$ acts bijectively on $\hat{A}[p^q]$.

Now we check that $(x,y)$ is a solution to \eqref{LM}:
Applying $\phi_m$, we have $\phi_m\circ\phi_l(x)=-r\phi_m(y)$. Using Lemma~\ref{yl}, we then have $-rsx=-r\phi_m(y)$. Since $x$ and $y$ are $p^q$-torsion, multiplication by $-r$ acts bijectively, implying $sx=\phi_m(y)$.

Thus, for each $x\in A[p^q]$, there is one $y\in \hat{A}[p^q]$ so that $(x,y)$ is a solution to \eqref{LM}. The projection map $(x,y)\mapsto x$ gives an isomorphism from solutions to \eqref{LM} to $A[p^q]\cong (\Z/p^q\Z)^4$.

If $s$ is relatively prime with $p$, an analogous argument shows there is exactly one solution $(x,y)$ to \eqref{LM} for each $y\in \hat{A}[p^q]$ and that again the group of all solutions is isomorphic to  $(\Z/p^q\Z)^4$.

\medskip

Cases 1 and 2 have covered all cases where $r$ and $s$ are not both divisible by $p$. Going forward, we assume $p\mid r$ and $p\mid s$.
If $\charr(k)\neq 0$, our assumption in Setting~\ref{defKv} that $\charr(k)\nmid n$ implies in the following cases that $\charr(k)\neq p$ and so we may apply Claim~\ref{kernel}. By the primitivity of the Mukai vector, $n_1=0$ and 
$n_2$ is equal to the highest power of $p$ dividing $\chi$.

Let $j$ be the highest power of $p$ dividing $r$ and $k$ be the highest power of $p$ dividing $s$. If $r$ or $s$ is $0$, we choose the convention that $j$ or $k$ is $\infty$.

In each of Cases 3, 4, 5, we handle in stages the situations where $q$ becomes higher and higher relative to $j$ and $k$.
From now on, we assume $j\geq k$. If $k>j$ the argument is analogous.

\medskip\noindent\textit{Case 3: $\chi\neq 0$, $0<k\leq j$, and $q\leq j$.}
We observe that $p^q$ is the highest power of $p$ that divides~$\chi$.

Solutions $(x,y)\in (A\times \hat{A})[p^q]$ to the first equation in \eqref{LM} are precisely those where $\phi_l(x)=0$. By Claim~\ref{kernel}, the group of such $x$ is isomorphic to $(\Z/p^q\Z)^2$.

Fix such an $x$. We observe that $sx\in A[p^q]$ and $\phi_l(sx)=0$.
By Claim~\ref{helper}(a), the preimage of $sx$ under $\phi_m$ in $\hat{A}[p^q]$
is of the form $b+(\Z/p^q\Z)^2$ for some $b\in \hat{A}[p^q]$, thus there are $p^{4q}$ total solutions.

The projection $(x,y)\mapsto x$ gives a surjective group homomorphism 
$G_A(v)\twoheadrightarrow (\Z/p^q\Z)^2$.
The kernel of this map consists of all solutions where $x=0$, which by Claim~\ref{kernel} is isomorphic to $(\Z/p^q\Z)^2$.
Since $G_A(v)\leqslant (A\times\hat{A})[p^q]\cong (\Z/p^q\Z)^8$, this short exact sequence shows it must be isomorphic to $(\Z/p^q\Z)^4$.

\medskip

In Cases 4 and 5 we make a reduction argument.
We observe that for any $(x,y)\in G_A(v)$,
$(sx,sy)\in G_A(v)\cap (A\times\hat{A})[p^{q-k}]$.
In each of Cases 4 and 5, we will show that
the map
\begin{equation}\label{smult}
G_A(v)\xrightarrow{\cdot s} G_A(v)\cap (A\times\hat{A})[p^{q-k}]
\end{equation}
given by multiplication by $s$
is surjective and 
$p^{4k}$-to-$1$. This argument may be repeated to reduce each case to previous cases.

\medskip
\noindent\textit{Case 4: $\chi\neq 0$, $0<k \leq j< q$, and $q\leq j+k$.}

We note that $p^q\mid \chi$. 
Since $q-k\leq j$, the argument in Case 3 shows that 
\begin{equation}\label{case3}
G_A(v)\cap(A\times\hat{A})[p^{q-k}]\cong (\Z/p^{q-k}\Z)^4.  
\end{equation}

Let $(u,v)\in G_A(v)\cap (A\times\hat{A})[p^{q-k}]$.
We seek $(x,y)\in G_A(v)< (A\times\hat{A})[p^{q}]$ where $(sx,sy)=(u,v)$. First we search for elements $y$ where $\phi_m(y)=u$ and $sy=v$, thus we look at the preimage of $u$ under $\phi_m$ and analyze which of those elements give $v$ when multiplied by $s$.

Note that $\phi_l(u)=-rv=0$. Since $u$ is $p^{q-k}$-torsion, it is also $p^q$-torsion so by Claim~\ref{helper}(a), 
the preimage of $u$ under $\phi_m$
in $\hat{A}[p^q]$ is of the form $b+(\Z/p^q\Z)^2$ where $b\in\hat{A}[p^{q}]$.
Multiplying by $s$ gives a $p^{2k}$-to-$1$ map on the following cosets:
$$b+(\Z/p^q\Z)^2\xrightarrow{\cdot s} sb+(\Z/p^{q-k}\Z)^2.$$
We will now show that $v$ is in the image of this map:
the preimage of $su$ under $\phi_m$ in $\hat{A}[p^q]$
is of the form $v+(\Z/p^q\Z)^2$. The preimage of $su$ under $\phi_m$ that is $p^{q-k}$-torsion is thus of the form $v+(\Z/p^{q-k}\Z)^2$ and has exactly $p^{2(q-k)}$ elements.
Now, the elements of $sb+(\Z/p^{q-k}\Z)^2$ are $p^{q-k}$-torsion, there are $p^{2(q-k)}$ of them, and their image under $\phi_m$ is $su$, thus these sets are equal, implying $v\in sb+(\Z/p^{q-k}\Z)^2$.
Thus there are $p^{2k}$ elements $y\in \hat{A}[p^q]$ with the desired properties.

Now we search for elements $x$ where $\phi_l(x)=-ry=-\frac{r}{s}v$ and $sx=u$. Note that since $j\geq k$, $-\frac{r}{s}=\frac{cp^e}{d}$
for some $c$, $d$ relatively prime with $p$. We may define multiplying by $\frac{1}{d}$ on $p$-power torsion points by taking the preimage under multiplication by $d$ since it is a bijection on such points. 
We examine the preimage of $-\frac{r}{s}v$ under $\phi_l$ and analyze which of those elements give $u$ when multiplied by $s$.

Note $\phi_m(-\frac{r}{s}v)=-ru=0$, so by Claim~\ref{helper}(b), 
the preimage of $-\frac{r}{s}v$ under $\phi_l$ is of the form
$a+(\Z/p^q\Z)^2$ where $a\in {A}[p^{q}]$. 

Multiplying by $s$ gives a $p^{2k}$-to-$1$ map on the following cosets:
$$a+(\Z/p^q\Z)^2\xra{\cdot s} sa+(\Z/p^{q-k}\Z)^2.$$
We will now show that $u$ is in the image of this map:
The preimage of $-rv$ under $\phi_l$ in $A[p^q]$ is of the form $u+(\Z/p^q\Z)^2$. The preimage of $-rv$ under $\phi_l$ that is
$p^{q-k}$-torsion is thus of the form 
$u+(\Z/p^{q-k}\Z)^2$.
The elements of $sa+(\Z/p^{q-k}\Z)^2$ are $p^{q-k}$-torsion and their image under $\phi_l$ is $-rv$, thus these sets are equal, implying
$u\in sa+(\Z/p^{q-k}\Z)^2$.
In summary, there are $p^{2k}$ elements 
$x\in A[p^q]$  where $\phi_l(x)=-\frac{r}{s}v=-ry$ and $sx=u$.

This shows that
\eqref{smult} is a surjective $p^{4k}$-to-$1$ map.
Since multiplication by $s$ decreases the order of the $p$-power torsion of an element by exactly $p^k$, by 
\eqref{case3} we may conclude that
$G_A(v)\cong (\Z/p^q\Z)^4$.

\medskip
\noindent\textit{Case 5: $\chi\neq 0$, $0<k\leq j< q$, and $j+k<q$.}
In this case, $p^{j+k}$ divides $\chi$ and no higher powers of $p$ may divide $\chi$.

By the argument in Case 4, we have
\begin{equation}\label{case4}
G_A(v)\cap(A\times\hat{A})[p^{j+k}]\cong (\Z/p^{j+k}\Z)^4.
\end{equation}

We will first extend our result for solutions of order up to $p^{j+2k}$.
For convenience, define $t:=\min\{q,j+2k\}$.

Let $$(u,v)\in
G_A(v)\cap ((A\times\hat{A})[p^{t-k}]\setminus (A\times\hat{A})[p^{j}])
$$
We seek
$$(x,y)\in
G_A(v)\cap  (A\times\hat{A})[p^{t}]
$$
so that $(sx,sy)=(u,v)$. First we search for elements $y$ where $\phi_m(y)=u$ and $sy=v$; thus we look at the preimage of $u$ under $\phi_m$ and analyze which of those elements give $v$ when multiplied by $s$.

If $\phi_l(u)=0$, then the argument from Case~4 shows that there are $p^{2k}$ elements $y\in \hat{A}[p^{t}]$ where $\phi_m(y)=u$
and $sy=v$.

If $\phi_l(u)\in \hat{A}[p^k]\setminus \{0\}$, 
then by Claim~\ref{helper}(c),
the preimage of $u$ under $\phi_m$
in $\hat{A}[p^{t}]$
is of the form
$b+(\Z/p^{j+k}\Z)^2$ where $b\in \hat{A}[p^{t}]\setminus \hat{A}[p^{j+k}]$.

Note that $\phi_l(\phi_m(b))=-rv$ and by Lemma~\ref{yl}, $-rsb=-(n+rs)b=-rv$. 
Multiplication by $s$ gives a $p^{2k}$-to-$1$ map on the following cosets:
$$b+(\Z/p^{j+k}\Z)^2\xra{\cdot s}sb+(\Z/p^{j}\Z)^2.$$
We will now show that $v$ is in the image of this map.

The preimage of $su$ under $\phi_m$
in $\hat{A}[p^{j+k}]$
is of the form $v+(\Z/p^{j+k}\Z)^2$.
The part of $v+(\Z/p^{j+k}\Z)^2$
whose image under multiplication by $-r$ is $-rv$
is of the form $v+(\Z/p^{j}\Z)^2$. 
Since $\phi_m$ maps $sb+(\Z/p^{j}\Z)^2$ to $su$ and multiplying this coset by $-r$ gives $-rv$, 
by counting elements, $sb+(\Z/p^{j}\Z)^2=v+(\Z/p^{j}\Z)^2$, hence $v\in sb+(\Z/p^{j}\Z)^2$. Thus there are $p^{2k}$ elements $y\in b+(\Z/p^{j+k}\Z)^2$ where $\phi_m(y)=u$
and $sy=v$.

Now we search for elements $x$ where $\phi_l(x)=-ry=-\frac{r}{s}v$ and $sx=u$. We examine the preimage of $-\frac{r}{s}v$ under $\phi_l$ and analyze which of those elements give $u$ when multiplied by $s$.

If $\phi_m(-\frac{r}{s}v)=-ru=0$, we may conclude using the arguments in
Case~4.
Otherwise, by Claim~\ref{helper}(d),
the preimage of $-\frac{r}{s}v$ under $\phi_l$
in $A[p^{t}]$
is of the form
$a+(\Z/p^{j+k}\Z)^2$ where $a\in {A}[p^{t}]\setminus {A}[p^{j+k}]$.
Note that $\phi_m(\phi_l(a))=-ru$ and by Lemma~\ref{yl},
$-rsa=-ru$.

Multiplying by $s$ gives a $p^{2k}$-to-$1$ map on cosets:
$$a+(\Z/p^{j+k}\Z)^2 \xra{\cdot s} sa+(\Z/p^{j}\Z)^2.$$
We will now show that $u$ is in the image of this map.
The preimage of $-rv$ under $\phi_l$ is of the form $u+(\Z/p^{j+k}\Z)^2$.
The part of $u+(\Z/p^{j+k}\Z)^2$
whose image under multiplication by $-r$ 
is $-ru$ is of the form $u+(\Z/p^{j}\Z)^2$.
We have shown that $\phi_l$ maps $sa+(\Z/p^{j}\Z)^2$ to $-rv$ and multiplying this coset by $-r$ gives $-ru$. By counting elements, we have the set equality
$sa+(\Z/p^{j}\Z)^2=u+(\Z/p^{j}\Z)^2$, hence $u\in sa+(\Z/p^{j}\Z)^2$.
Thus there are
$p^{2k}$ elements $x\in a+(\Z/p^{j+k}\Z)^2$ where $\phi_l(x)=-\frac{r}{s}v$ and $sx=u$. Thus the following map is surjective and $p^{4k}$-to-$1$:
\begin{equation}\label{mults}
G_A(v)\cap (A\times\hat{A})[p^{t}]\xrightarrow{\cdot s} G_A(v)\cap (A\times\hat{A})[p^{t-k}]
\end{equation}
If $q\leq j+2k$, we may now conclude,
in combination with \eqref{case4},  that $G_A(v)\cong (\Z/p^q\Z)^4$.

If $q> j+2k$, \eqref{mults} shows that 
$G_A(v)\cap (A\times\hat{A})[p^{j+2k}]\cong (\Z/p^{j+2k}\Z)^4$.
The above argument may be repeated for solutions of orders up to 
$p^{j+3k}$ and then upward inductively to conclude that
$G_A(v)\cong(\Z/p^{q}\Z)^4$.
\end{proof}

\begin{example}\label{gpexample}
(a) For $K_2(A)\cong K_A(1,0,-3)$, $l$ and $m$ are the trivial N\'eron-Severi classes (these are treated in general by  Case 1 of the proof of Lemma~\ref{beef}),
so $\phi_l(x)=0$ and $\phi_m(y)=0$.
The equations \eqref{LM}
simplify to  $0=-y$ and $0=-3x$, which recovers the fact that the group of symplectic automorphisms for $K_2(A)$ is generated by $\iota$ and translation by elements of $A[3]$ \cite[Cor.~5(2)]{HigherEn}.

\smallskip

\noindent(b) In Sections~\ref{sec:JacobiansK3}-\ref{sec:JacobiansIsolatedpts}, we consider fourfolds $K_A(v)$ where $v=(0,l,s)$ for $l$ primitive and $\chi=3$.
If $s\equiv s' \textrm{\ mod\ } 3$, then $G_A(0,l,s)=G_A(0,l,s')$, leaving only three possible distinct groups of this form, which are described by a combination of
Cases~2 and 3 of Lemma~\ref{beef}. 
Case~2 shows that $G_A(0,l,1)$ and $G_A(0,l,2)$, though in general distinct, 
each have one element $(x,y)\in (A\times\hat{A})[3]$ for every $y\in \hat{A}[3]$, e.g.~for any $y\in \hat{A}[3]$, there is one $x\in\ker\phi_l$ so that $t_y^*M\simeq L_{x}\otimes M$.  However, we see from Case~3 that $G_A(0,l,0)$ is the product of $\ker\phi_l$ and $\ker\phi_m$.
\end{example}  

The assumption in Theorem~\ref{n4} that $v$ is primitive is necessary for $G_A(v)$ to be isomorphic to $(\Z/n\Z)^4$.
In the case where $v=2v_0$ for $v_0$ a primitive Mukai vector with $v_0^2=2$, which is used to construct O'Grady sixfolds, the solutions to the equations \eqref{LM} are precisely of the form $(A\times \hat{A})[2]\cong (\Z/2\Z)^8$, as is shown in
\cite[Lem.~5.1]{autoOG}. 
We generalize this result by extending
Theorem~\ref{n4}
to find all solutions to \eqref{LM} for any Mukai vector.

In the following result we alter our hypotheses by naming the primitive vector of Setting~\ref{defKv} $v_0$ and considering a multiple of $v_0$.

\begin{cor}
Let $v=(r,l,s)$ be a Mukai vector on an abelian surface $A$ so that $v=dv_0$
where $v_0=(r_0,l_0,s_0)$ is primitive and $n:=\frac{v_0^2}{2}$.

Then the group $G_A(v)$ of solutions $(x,y)\in A\times\hat{A}$ to the following equations 
\begin{equation}\label{unprimitive}
\phi_l(x)=-ry   \quad\text{and}\quad   \phi_m(y)=sx
\end{equation}
is isomorphic to $(\Z/dn\Z)^4\times(\Z/d\Z)^4$.
\end{cor}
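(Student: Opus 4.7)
The plan is to reduce to the primitive case handled by Theorem~\ref{n4}. Since $l=dl_0$ and $m=dm_0$, the identity $\phi_{L^{\otimes d}}=[d]\circ\phi_L$ gives $\phi_l=[d]\circ\phi_{l_0}$ and $\phi_m=[d]\circ\phi_{m_0}$, so the equations \eqref{unprimitive} for $v$ are exactly $d$ times the corresponding equations for $v_0$. Writing $[d]\colon A\times\hat{A}\to A\times\hat{A}$ for multiplication by $d$, this means
\[G_A(v)=[d]^{-1}(G_A(v_0)),\]
and Theorem~\ref{n4} identifies $G_A(v_0)\cong(\Z/n\Z)^4\leqslant (A\times\hat{A})[n]$; in particular $G_A(v)\leqslant (A\times\hat{A})[dn]$.

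By the Chinese Remainder Theorem it suffices to describe the $p$-primary part $G_A(v)[p^\infty]$ for each prime $p$. Let $p^a$ and $p^b$ denote the $p$-parts of $n$ and $d$ respectively; the target is
\[G_A(v)[p^\infty]\cong(\Z/p^{a+b}\Z)^4\oplus(\Z/p^{b}\Z)^4.\]
The heart of the argument is showing that $G_A(v_0)[p^\infty]\cong(\Z/p^a\Z)^4$ is a direct summand of the free $\Z/p^a\Z$-module $(A\times\hat{A})[p^a]\cong(\Z/p^a\Z)^8$. I would verify this by reduction modulo $p$: for any $\Z/p^a\Z$-basis $f_1,\dots,f_4$ of $G_A(v_0)[p^\infty]$, the multiples $p^{a-1}f_1,\dots,p^{a-1}f_4$ form a basis of $G_A(v_0)[p^\infty][p]\cong(\Z/p\Z)^4$ inside $(A\times\hat{A})[p]$, and under the canonical identification this coincides with the image of $f_1,\dots,f_4$ in $(A\times\hat{A})[p^a]/p(A\times\hat{A})[p^a]\cong(\Z/p\Z)^8$. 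Their linear independence is the standard criterion for a rank-$4$ free $\Z/p^a\Z$-submodule to split off.

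The rest is bookkeeping. I would lift each $f_i$ to $\tilde f_i\in(A\times\hat{A})[p^{a+b}]$ of order $p^{a+b}$ with $p^b\tilde f_i=f_i$; because the $\tilde f_i$ reduce to the same independent vectors modulo $p$, they extend to a free $\Z/p^{a+b}\Z$-basis $\tilde f_1,\dots,\tilde f_8$ of $(A\times\hat{A})[p^{a+b}]$. A direct calculation on this basis then yields
\[G_A(v)[p^\infty]=[p^b]^{-1}(G_A(v_0)[p^\infty])=\langle \tilde f_1,\dots,\tilde f_4,\,p^a\tilde f_5,\dots,p^a\tilde f_8\rangle\cong(\Z/p^{a+b}\Z)^4\oplus(\Z/p^{b}\Z)^4,\]
and the Chinese Remainder Theorem assembles the desired global isomorphism. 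The main obstacle is the direct-summand claim; an alternative would be to analyze the extension class of $0\to (A\times\hat{A})[d]\to G_A(v)\to G_A(v_0)\to 0$ directly, but the mod-$p$ reduction seems the cleanest route.
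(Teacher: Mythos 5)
Your proposal is correct and takes essentially the same route as the paper: both use $\phi_l=[d]\circ\phi_{l_0}$ and $\phi_m=[d]\circ\phi_{m_0}$ to see that $(x,y)$ solves \eqref{unprimitive} exactly when $(dx,dy)\in G_A(v_0)$, i.e.\ $G_A(v)=[d]^{-1}(G_A(v_0))$, and then invoke Theorem~\ref{n4}. The only difference is that you spell out the final structure computation $[d]^{-1}\bigl((\Z/n\Z)^4\bigr)\cong(\Z/dn\Z)^4\times(\Z/d\Z)^4$ prime by prime via the basis-lifting (direct summand) argument, a step the paper leaves implicit after the preimage observation.
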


\begin{proof}
Let $m$ and $m_0$ be the respective N\'eron-Severi classes determined by $\Phi_P$. 
We see from the definition of $\phi_l$ that $\phi_l=d\cdot \phi_{l_0}$ and likewise $\phi_m=d\cdot \phi_{m_0}$:
if we choose $(L_0,M_0)\in \Pic^{l_0}(A)\times \Pic^{m_0}(\hat{A})$ and  $L:=L_0^{\otimes d}$, $M:=M_0^{\otimes d}$, then for any $x\in A$, we have
\[
\phi_l(x):=t_x^*L\otimes L^{-1}=t_x^*L_0^{\otimes d}\otimes (L_0^{\otimes d})^{-1}
=(t_x^*L_0\otimes L_0^{-1})^{\otimes d}
=d\cdot \phi_{l_0}(x).
\]
Since $\phi_{l_0}$ and $\phi_{m_0}$ are group homomorphisms, we have, for any $(x,y)\in A \times \hat{A}$, 
\[
\phi_l(x)=\phi_{l_0}(dx)\quad\text{and}\quad
\phi_m(y)=\phi_{m_0}(dy).  
\]
Thus a pair $(x,y)\in A\times\hat{A}$ is a solution to \eqref{unprimitive}
if and only if 
\[
\phi_{l_0}(dx)=-r_0dy   \quad\text{and}\quad   \phi_{m_0}(dy)=s_0dx,
\]  
that is, $(dx,dy)$ solves the equations \eqref{LM} given by $v_0$.
By Theorem~\ref{n4} the set of solutions to the equations \eqref{LM} given by $v_0$ is isomorphic to $(\Z/n\Z)^4\cong G_A(v_0)$. We may conclude by observing that the set of solutions to
\eqref{unprimitive} is given by exactly the elements of $A\times\hat{A}$ that are in $G_A(v_0)$ after being multiplied by $d$. 
\end{proof}

\subsection{Involutions and fixed loci}
\label{sec:alt_iota}

Let $A$ be an abelian surface over an arbitrary field $k$.  
If $K_A(v)$ is a fiber over symmetric line bundles, then $\iota^*$ gives an involution of $K_A(v)$. However, if symmetric bundles do not exist in the appropriate N\'eron-Severi classes over $k$, we show here how to define an involution $\kappa$ to replace $\iota^*$. For the remainder of the section we fix a set of data as in Setting~\ref{defKv}, and hence fix a variety $K_A(v)$ over $k$.

We first give a lemma that will allow us to construct the involution $\kappa$.

\begin{lem}\label{relfibers}
Suppose we have an additional choice of line bundles 
$L'\in \Pic^l(A)$, $M'\in\Pic^m(\hat{A})$ over $k$.
Let $K_A(v)':=\alb^{-1}(L',M')$.
Then 
there is an element $(a,b)\in (A\times\hat{A})(k)$ so that $L_b\otimes t_a^*\colon K_A(v) \to K_A(v)'$ is an isomorphism over $k$.
It is unique up to composition with elements in $G_A(v)(k)$.
\end{lem}

\begin{proof}
Recall that for any $(x,y)\in A\times\hat{A}$, applying $L_y\otimes t_x^*$ to an element $\fF\in K_A(v)$, we have
  $$
\det(L_y\otimes t_x^*\fF)=L_y^{\otimes r}\otimes t_x^*L, \quad\text{and}\quad
\det(\Phi_P(L_y\otimes t_x^*\fF))=L_{-x}^{\otimes s}\otimes t_y^*M.
  $$
  We also recall that the morphism $\phi \colon A\times \hat{A} \to A \times \hat{A}$ from \eqref{phiisogeny} is an isogeny defined over $k$ and sends
  sends $(x,y)$ to
  $$
  (t_y^*M\otimes M^{-1}\otimes L_{-x}^{\otimes s},
t_x^*L  \otimes L^{-1}\otimes L_y^{\otimes r} ).
$$
The element $(a,b)$ desired is precisely a preimage of
$(L'\otimes L^{-1},M'\otimes M^{-1})\in (A\times \hat{A})(k)$
under $\phi$.
Finally, $L_b\otimes t_a^*: K_A(v) \to K_A(v)'$
is an isomorphism since 
it has an inverse $L_{-b}\otimes t_{-a}^*$.
\end{proof}

\begin{cons}\label{kappa}
Applying $\iota^*$ gives an isomorphism from $K_A(v)$ to 
$\alb^{-1}(\iota^*L,\iota^*M)$.
By Lemma~\ref{relfibers}, there is an $(a,b)\in (A\times\hat{A})(k)$ such that $L_b\otimes t_a^*$ maps isomorphically from $\alb^{-1}(\iota^*L,\iota^*M)$ back to $K_A(v)$, so we have the following automorphism defined over $k$:
\begin{align*}
\kappa\colon K_A(v)&\to K_A(v)\\
\fF&\mapsto  L_b\otimes t_a^*\iota^*\fF.
\end{align*}
\end{cons}

\begin{rmk}
We note that $\kappa$ is an involution. More generally, for any
$(c,d)\in A\times\hat{A}$, the morphism 
$L_d\otimes t_c^*\iota^*$ (which in general is an automorphism of $M(v)$ but perhaps not of $K(v)$) is an involution on $M_A(v)$. Indeed,
$(\iota\circ t_c)^2=\id$ on $A$ and $L_d$ is degree $0$, hence fixed under pullback by translation; thus for any $\fF\in M_A(v)$, we have:
\begin{align*}\label{invol}
(L_d\otimes t_c^*\iota^*)\circ (L_d\otimes t_c^*\iota^*)(\fF)
&=L_d\otimes t_c^*\iota^*L_d \otimes t_c^*\iota^*t_c^*\iota^*(\fF)\\
&=L_d\otimes t_c^*L_d^{-1} \otimes \fF
 =L_d\otimes L_d^{-1} \otimes \fF=\fF.
\end{align*}
\end{rmk}

The following are thus involutions of $K_A(v)$, where
$(x,y)\in G_A(v)(k)$:
\[
\kappa_{(x,y)}:= L_y\otimes t_x^*\kappa.  
\]
Under the simplifying assumption that $L$ and $M$ are symmetric, we may instead denote these involutions as 
\[
\iota_{(x,y)}:= L_y\otimes t_x^*\iota^*.
\]  

\begin{lemma}\label{translation}
Let $n:=\frac{v^2}{2}$ be odd.
Assume $k=\bar{k}$ and that $L$ and $M$ are symmetric, so $\iota^*$ is an involution on $K_A(v)$.
Then
$\Fix(\iota_{(x,y)})$ is a translation of $\Fix(\iota_{(0,0)})$, i.e., there exists $(u,w)\in G_A(v)$
so that:
\[
\Fix(\iota_{(x,y)})=L_{w}\otimes t_{u}^*(\Fix(\iota_{(0,0)})).\]
More generally, without the assumption that $L$ and $M$ are symmetric, there exists $(u,w)\in G_A(v)$ 
so that:
\[
  \Fix(\kappa_{(x,y)})=L_{w}\otimes t_{u}^*(\Fix(\kappa_{(0,0)})).\]
\end{lemma}

\begin{proof}
Let $\fF\in K_A(v)$ be in the fixed locus of $\iota^*$.
Pick $(u,w)\in G_A(v)$ so that $2w=y$ and $2u=x$, which is possible since $n$ is odd.
For instance when
$\frac{v^2}{2}=3$, $K_A(v)$ is a fourfold and the elements of $G_A(v)$ are all three-torsion, so we may choose $(-x,-y)$.

Then $L_{w}\otimes t_{u}^*\fF$ must be fixed by the involution
\[
L_{w}\otimes t_{u}^*\iota^*(L_{-w}\otimes t_{-u}^*)=L_{2w}\otimes t_{2u}^*\iota^*.
\]
The other direction of the containment is similar, as is the case with $\iota^*$ replaced by $\kappa$.
\end{proof}

\begin{prop}\label{kappa_fixed}
Let $n:=\frac{v^2}{2}$ be odd,
assume $k=\bar{k}$, and let 
$L'\in \Pic^l(A)$ and $M'\in\Pic^m(\hat{A})$ be symmetric line bundles (cf.~Lemma~\ref{symmetric}).
Fix an involution $\kappa$ as in Construction~\ref{kappa} on $K_A(v)$.
Then the fixed locus of $\kappa$ in $K_A(v)$
is isomorphic to the fixed locus of $\iota^*=\iota_{(0,0)}$ in $K_A(v)'$.
\end{prop}

\begin{proof}
By Lemma~\ref{relfibers}, there is an $(x,y)\in A\times \hat{A}$ so that
$L_y\otimes t_x^*$ gives an isomorphism from $K_A(v)$ to $K_A(v)'$.
The composition
$$
(L_{-y}\otimes t_{-x}^*)\circ \iota^*\circ (L_y\otimes t_x^*)
$$
may be rearranged to 
$L_{-2y}\otimes t_{-2x}^*\iota^*$, where
$L_{-2y}\otimes t_{-2x}^*$ gives an isomorphism from $\alb^{-1}(\iota^*L,\iota^*M)$  
to $K_A(v)$.
Thus, by the uniqueness statement of Lemma~\ref{relfibers}, there is an element $(u,w)\in G_A(v)$ for which $L_w\otimes t_u^*\circ L_{-2y}\otimes t_{-2x}^*$ is equal to 
the map $L_{b}\otimes t_{a}^*$ in the definition of $\kappa$. 
Then $L_{-y}\otimes t_{-x}^* \Fix(\iota^*)$ is equal to $\Fix(\kappa_{(-u,-w)})$, which is isomorphic to $\Fix(\kappa)$
by Lemma~\ref{translation}.
\end{proof}

Finally, we give the following general result on the action of the Galois group on the geometric fixed loci. 

\begin{prop}\label{prop:Galois}
Let $k$ be an arbitrary field. For $(x,y)\in G_{A_{\kbar}}(v)$, the action of $\sigma\in \Gal(k^{\sep}/k)$
sends the fixed
locus of $\kappa_{(x,y)}$ in $K_A(v)_\kbar$ to the fixed locus of $\kappa_{(\sigma^{-1} x,\sigma^{-1} y)}$.
\end{prop}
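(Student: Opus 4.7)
The key point is that the formation of $\iota_{(x,y)}$ is Galois-equivariant in the parameter $(x,y)$. Since $L$ and $M$ are symmetric, $\iota^*$ descends to a $k$-automorphism on $K_A(v)$, so $\iota^*$ is itself fixed by Galois. The operation $L_y\otimes t_x^*$ arises from a $k$-morphism $(A\times\hat{A})\times M_A(v)\to M_A(v)$ coming from the natural action of $A\times\hat{A}$ on $M_A(v)$, and by Remark~\ref{respectalllinebundles} its restriction to $G_A(v)\times K_A(v)\to K_A(v)$ is again a $k$-morphism. Composing with $\iota^*$ yields a $k$-morphism $\Psi\colon G_A(v)\times K_A(v)\to K_A(v)$ whose evaluation at a $\kbar$-point $(x,y)$ on the first factor reproduces $\iota_{(x,y)}$.

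Given $\fF\in\Fix(\iota_{(x,y)})(\kbar)$, the plan is to apply Galois equivariance of $\Psi$ to the fixed-point identity $\Psi((x,y),\fF)=\fF$. This produces the identity
\[
\iota_{(\sigma^{-1}x,\sigma^{-1}y)}\bigl(\sigma(\fF)\bigr)=\sigma(\fF),
\]
so that $\sigma(\fF)\in\Fix(\iota_{(\sigma^{-1}x,\sigma^{-1}y)})(\kbar)$; the inverse on $(x,y)$ reflects the convention that the Galois action on the parameter scheme $G_{A_\kbar}(v)$ is dual to its action on closed subschemes of $K_A(v)_\kbar$. Running the same argument with $\sigma$ replaced by $\sigma^{-1}$ gives the reverse inclusion, upgrading the set-theoretic statement on $\kbar$-points to an isomorphism of closed $\kbar$-subschemes.

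The only real obstacle is bookkeeping of Galois conventions across three separate actions—on $\kbar$-points of $K_A(v)$, on automorphisms by conjugation, and on the indexing group $G_{A_\kbar}(v)$. Packaging everything as equivariance of the single $k$-morphism $\Psi$ is what makes this bookkeeping routine rather than error-prone, and it is the step I would want to write down most carefully.
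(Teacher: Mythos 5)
Your argument reaches the right conclusion but packages it differently from the paper. The paper's proof is a pointwise sheaf computation: starting from $\fF\simeq L_y\otimes t_x^*\iota^*\fF$, it applies $\sigma^*$ and pushes it through the formula using the identity $t_x\circ\sigma=\sigma\circ t_{\sigma^{-1}x}$, and then verifies $\sigma^*L_y\simeq L_{\sigma^{-1}y}$ by a short Fourier--Mukai calculation with $\Phi_P$. You instead bundle the whole family of involutions into one $k$-morphism $\Psi\colon G_A(v)\times K_A(v)\to K_A(v)$ and quote Galois equivariance of a $k$-morphism; the two concrete identities the paper computes are exactly what your equivariance statement black-boxes. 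Your route is more structural and makes the Galois bookkeeping conceptually cleaner (and the $\sigma$ versus $\sigma^{-1}$ issue is indeed only a convention, harmless since $\sigma$ ranges over the whole group), while the paper's route has the advantage of needing nothing beyond the semilinear action of $\sigma$ on sheaves.

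The one place your argument is thinner than you acknowledge is the existence of $\Psi$ as a morphism of $k$-schemes. The paper never constructs the $(A\times\hat{A})$-action on $M_A(v)$ as an algebraic action defined over $k$; Theorem~\ref{n4} and Remark~\ref{respectalllinebundles} are statements about individual geometric points, proved with $k=\kbar$. Since $v$ primitive and $H$ $v$-generic do not guarantee that $M_A(v)$ is a fine moduli space, defining $\Psi$ requires either a (quasi-)universal family or an argument on the moduli functor/stack followed by descent to the coarse space (using that coarse moduli formation is compatible with the flat base change by $A\times\hat{A}$); the restriction landing in $K_A(v)$ over $k$ then needs a separate check (it can be verified after base change to $\kbar$, where it follows from Theorem~\ref{n4} together with reducedness of $G_{A_\kbar}(v)\times K_A(v)_\kbar$). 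None of this is false, and it is standard, but it is a genuine input you assert rather than prove, and it is precisely what the paper's direct computation with $t_x\circ\sigma=\sigma\circ t_{\sigma^{-1}x}$ and $\sigma^*L_y\simeq L_{\sigma^{-1}y}$ is designed to avoid. If you keep your formulation, you should either justify the algebraicity of the action over $k$ or note that the statement can be checked pointwise exactly as in the paper.
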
  

\begin{proof}
Suppose $\fF$ is fixed by $\kappa_{(x,y)}$.
We use the equality
$t_x\circ\sigma=\sigma\circ t_{\sigma^{-1}x}$
and the observation that $\sigma$ commutes with $\iota$ and, moreover, $\kappa$, since
$\kappa$ is defined over the ground field $k$,
to simplify the following equation:
\[
\sigma^*\fF \simeq \sigma^*(L_y\otimes t_x^*\kappa\fF)
\simeq \sigma^*L_y\otimes \sigma^*t_x^*\kappa\fF 
\simeq \sigma^*L_y\otimes t_{\sigma^{-1} x}^*\kappa(\sigma^*\fF).
\]
Then we have $\sigma^*L_y\simeq L_{\sigma^{-1}y}$, which we may verify using $\Phi_P\colon D(A)\to D(\hat{A})$: 
\begin{align*}
\Phi_P(L_{-\sigma^{-1}y}\otimes \sigma^*L_y)
&\simeq t_{-\sigma^{-1}y}^*\sigma^*k(-y)[-g]
\simeq \sigma^*t_{-y}^*k(-y)[-g]\\
&\simeq\sigma^*k(0_{\hat{A}})[-g]\simeq k(0_{\hat{A}})[-g].  \qedhere
\end{align*}
\end{proof}

\subsection{Symplectic automorphisms and involutions}
\label{sec:symplectic}
Let $A$ be an abelian surface over $\C$. 
In the following lemma we give a generalization of \cite[Cor.~5(2)]{HigherEn} to hyperk\"ahler varieties $K_A(v)$ over $\C$: 

\begin{thm}\label{thm:symplectic}
Suppose we are in Setting~\ref{defKv} and we fix an involution $\kappa$ as in Construction~\ref{kappa}. 
Then the kernel of 
\begin{equation}\label{kerH}
  \nu \colon \Aut(K_A(v))\to \OO(H^2(K_A(v),\Z))
\end{equation}  
consists of automorphisms
of the form $L_y\otimes t_x^*$ and of the form $\kappa_{(x,y)}:=L_y\otimes t_x^*\kappa$ for $(x,y)\in G_A(v)$. 
Thus, for any $(x,y)\in G_A(v)$, the automorphism $L_y\otimes t_x^*$ is symplectic.  
The $\kappa_{(x,y)}$ are symplectic involutions of $K_A(v)$, and when $\dim K_A(v)=4$, these are all of the symplectic involutions.
\end{thm}

\begin{rmk}
While $\kappa$ is not unique, by
Lemma~\ref{relfibers}, the collection of elements in $\ker \nu$ is independent of the choice made in Construction~\ref{kappa}.
\end{rmk}

\begin{proof}
Elements of $(x,y)\in A\times\hat{A}$ act on $M_A(v)$ via $L_{y}\otimes t_{x}^*$. Abelian varieties are path-connected, so the action of any element in $A\times\hat{A}$ is homotopic to the identity, which implies the induced action on $H^2(M_A(v),\Z)$ is trivial.
If $(x,y)\in G_A(v)$,  then Theorem~\ref{n4} shows that the action of $L_{y}\otimes t_{x}^*$ restricts to  $K_A(v)$. By \cite[Thm.~0.2(2)]{Yoshioka}, the restriction map $H^2(M_A(v),\Z)\to H^2(K_A(v),\Z)$ is a surjection. Therefore, $L_{y}\otimes t_{x}^*$ acts trivially on $H^2(K_A(v),\Z)$ as well.

By \cite[Thm.~0.2(2)]{Yoshioka}, there is an isomorphism 
$$H^2(K_A(v),\Z)\cong v^\perp,$$
where $v^\perp \subset H^{even}(A,\Z)$ is the orthogonal complement to $v$ under the Mukai pairing.
Since $\iota^*$ acts by $-1$ on $H^1(A,\Z)$, it acts trivially on $H^{even}(A,\Z)$.

If we assume $L$ and $M$ are symmetric, $\iota^*$ is an automorphism of $K_A(v)$
and therefore must act trivially on $H^2(K_A(v),\Z)$. If $L$ and $M$ are not both symmetric, since we are working over an algebraically closed field, 
we observe that $\kappa$
is a composition of translation to an Albanese fiber over symmetric bundles, application of $\iota^*$ on that fiber, 
and translation back (cf.~proof of Proposition~\ref{kappa_fixed}), and thus $\kappa$ will act trivially on $H^2(K_A(v),\Z)$ as well.

By the discussion above,
$\ker \nu$ 
contains $2n^4$ elements, so by Theorem~\ref{n4} we have identified all of them.
The automorphisms in this kernel are clearly symplectic as the symplectic form generates part of $H^2(K_A(v),\C)$.

For any nontrivial choice of $(x,y)\in G_A(v)$, $L_y\otimes t_x^*$ is not an involution, but by Section~\ref{sec:alt_iota},
$\kappa_{(x,y)}$ is an involution on $K_A(v)$.

Finally, suppose $\dim K_A(v)=4$. By \cite[Thm.~7.5(i)]{KapferMenet}, all of the symplectic involutions of $K_A(v)$ act trivially on $H^2(K_A(v),\Z)$.
\end{proof}

\section{The middle cohomology of fourfolds $K_A(v)$}
\label{sec:midcohomology}

In this section, we work with data as in Setting~\ref{defKv} with the additional assumption 
that $v^2=6$, so $K_A(v)$ is a fourfold. We will prove results characterizing
the middle cohomology of $K_A(v)$ when $k$ has  
characteristic~$0$ in Section~\ref{sec:charzero}. We use these results to characterize the cohomology similarly when $k$ has
positive characteristic in Section~\ref{sec:charp} via a brief lifting argument.

\subsection{Results in characteristic zero}\label{sec:charzero}

Assume
$K_A(v)$ is defined over an arbitrary field $k$ of characteristic zero, so
we may assume without loss of generality that $\bar{k} \hookrightarrow \C$. In this case we can identify the Galois representations which make up the middle cohomology of $K_A(v)$. 
This will depend on understanding the fixed loci of $\kappa_{(x,y)}$ for $(x,y)\in G_{A_\kbar}(v)$.

\begin{prop}\label{K3pts}
Suppose $k=\kbar$. 
The fixed locus of any
involution $\kappa_{(x,y)}$ for $(x,y)\in G_A(v)$ on a fourfold $K_A(v)$ consists of a K3 surface and 36 isolated points.
\end{prop}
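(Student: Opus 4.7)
The plan is to reduce the statement to the already-understood model case of the generalized Kummer fourfold $K_2(A)$ and then transport the conclusion along a deformation of $\Kum_2$-type hyperk\"ahlers.

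First I would apply Lemma~\ref{translation}, which is available here since our hypotheses give $k=\kbar$ and $v^2/2 = 3$. It identifies $\Fix(\iota_{(x,y)})$ with the image of $\Fix(\iota^*)$ under the automorphism $L_{-y}\otimes t_{-x}^*$ of $K_A(v)$. Because this is an automorphism of schemes, it preserves the number and dimension of components as well as the isomorphism type of each component, so it suffices to prove the claim for $\iota^* = \iota_{(0,0)}$.

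Next I would recall the complete description of $\Fix(\iota^*)$ on $K_2(A) = K_A(1,0,-3)$ surveyed in the introduction: Hassett--Tschinkel \cite{HasTsc} exhibit the Kummer K3 surface as the $2$-dimensional component of the fixed locus, together with the isolated point supported at $0_A$, and Tar\'i \cite{Tari} produces the remaining $35$ isolated points, which are indexed by unordered configurations of $2$-torsion points of $A$. This settles the model case: $\Fix(\iota^*)$ on $K_2(A)$ consists of a K3 surface and $36$ isolated points.

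For the final step, Proposition~\ref{prop:Kvsmooth} yields that $K_A(v)$ is deformation equivalent to $K_2(A)$, and \cite[Theorem~2.1]{HasTsc} shows that $\ker\nu$ is a deformation invariant for $\Kum_2$-type hyperk\"ahlers. Together these produce a family of symplectic involutions on $\Kum_2$-type fourfolds specializing to $\iota^*$ on $K_A(v)$ at one end and to $\iota^*$ on $K_2(A)$ at the other. Kapfer--Menet's \cite[Theorem~7.5]{KapferMenet} then asserts that the topological type of the fixed locus of such a symplectic involution is preserved in families, transferring the $\mathrm{K3} \sqcup \{36\text{ points}\}$ description to $K_A(v)$. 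The main technical point is this last deformation argument, but since their theorem is formulated precisely for this family of involutions on $\Kum_n$-type varieties, nothing further is required beyond invoking it.
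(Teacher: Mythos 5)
Your strategy for the complex case coincides with the paper's: establish the model case $K_2(A)$ via Hassett--Tschinkel and Tar\'i, then transport the description along the deformation of the pair $(K_A(v),\iota_{(x,y)})$ using \cite[Theorem~2.1]{HasTsc} and \cite[Theorem~7.5]{KapferMenet}. (Your preliminary reduction to $\iota_{(0,0)}$ via Lemma~\ref{translation} is harmless but unnecessary, since the Kapfer--Menet deformation statement already applies to every $\iota_{(x,y)}$.)

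The genuine gap is that the proposition is stated for an arbitrary algebraically closed field $k$ of characteristic zero, while every ingredient in your argument---hyperk\"ahler deformation theory, \cite[Theorem~2.1]{HasTsc}, \cite[Theorem~7.5]{KapferMenet}---lives over $\C$. Your proof therefore only yields the statement for $K_A(v)_\C := K_A(v)\times_k\C$, not for $K_A(v)$ over $k$ itself, and you never return from $\C$ to $k$. The paper spends the second half of its proof on exactly this descent: after choosing an embedding $k\hookrightarrow\C$, one uses Fogarty's result \cite[Rmk.~3 following Thm~2.3]{Fogarty} to identify $\Fix(\widetilde{\iota}_{(x,y)})=\Fix(\iota_{(x,y)})\times_k\C$, so the fixed locus over $k$ is a surface $S$ together with $36$ $k$-points; one then still has to show that $S$ is a K3 surface over $k$, which the paper does via properness, smoothness (\cite[Lemma 4.1]{Donovan}), and flat base change giving $H^1(S,\mathcal{O}_S)=0$ and triviality of $\omega_S$ from the corresponding facts for the complex fiber. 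Without an argument of this kind (or an explicit appeal to some spreading-out/Lefschetz-type mechanism, which would itself need justification), your proposal does not prove the statement as formulated; it proves it only when $k=\C$.
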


\begin{proof}
First, suppose $k=\C$.
Work of Hassett and Tschinkel \cite{HasTsc} and Tar\'i \cite{Tari} shows that the statement is true for $K_2(A)$. A discussion of the isolated fixed points in this case is given in Section~\ref{isolated}.
  
Every hyperk\"ahler fourfold $K_A(v)$ is deformation equivalent to $K_2(A)$ and by \cite[Thm.~2.1]{HasTsc}, its group of symplectic involutions is also a deformation invariant. Thus, as in  Kapfer and Menet \cite[Thm.~7.5]{KapferMenet}, the fixed loci are related by deformation as well, so the statement holds for $K_A(v)$.

Now let $k$ be any algebraically closed field of characteristic zero. Since $A$ is defined over $k$, we can assume without loss of generality that $k \hookrightarrow \C$. Let ${K}_A(v)_{\C} := K_A(v) \times_{k} \C$ and consider the Cartesian square
\[
\xymatrix{{K}_A(v)_{\C} \ar[r]^{\widetilde{\kappa}_{(x,y)}} \ar[d] & {K}_A(v)_{\C} \ar[d] \\
K_A(v) \ar[r]^{\kappa_{(x,y)}} & K_A(v),
}\]
where $\widetilde{\kappa}_{(x,y)}$
is formed by replacing $\kappa$ with its extension to $\C$, which we call $\tilde{\kappa}$. 
By Theorem \ref{thm:symplectic}, $\widetilde{\kappa}_{(x,y)}$ is a symplectic involution, and by the argument above, $\Fix(\widetilde{\kappa}_{(x,y)})$ is a K3 surface $Z:=Z_{(x,y)}\subset {K}_A(v)_{\C}$ plus 36 isolated points. 

By \cite[Rmk.~3 following Thm~2.3]{Fogarty}, 
\[\Fix(\widetilde{\kappa}_{(x,y)})=\Fix(\kappa_{(x,y)})\times_{k} \C.\]
This descent of the fixed-point locus means that $\Fix(\kappa_{(x,y)})$ consists of a surface $S:=S_{(x,y)}\subset K_A(v)$ along with 36 $k$-points. We claim that $S$ is a K3 surface: indeed, we see via the valuative criterion of properness, using the fact that $\Fix(\kappa_{(x,y)})$ is a closed subscheme of $K_A(v)$, that $S \to \Spec k$ is proper.
By flat base change, we have that
$H^1(S,\mathcal{O}_S)\otimes \C \cong H^1(Z,\mathcal{O}_{Z})=0$, and $H^0(S,\omega_S)\otimes \C \cong H^0(Z,\omega_{Z})=\C$, so $\omega_S$ has a non-vanishing global section and hence is trivial. Finally, $S$ is smooth by \cite[Lem.~4.1]{Donovan}, which completes the proof.
\end{proof}

See \cite{KMO} for further discussion of these fixed-point loci in
hyperk\"ahlers of Kummer type.

\medskip

Let $k$ now be arbitrary. Let $S_{(x,y)}\subset K_A(v)_{\bar{k}}$ be the K3 surface in $\Fix(\kappa_{(x,y)})$ and $s_{(x,y)}\in H^4_\et(K_A(v)_{\bar{k}},\Q_\ell(2))$ the image of $[S_{(x,y)}]\in \CH^2 K_A(v)_{\bar{k}}$ under the cycle class map $\CH^2 K_A(v)_{\bar{k}} \to H^4_\et(K_A(v)_{\bar{k}},\Q_\ell(2))$. 

\begin{lemma}\label{lem:Galoiscohom}
For $\sigma \in \Gal(\bar{k}/k)$, the induced action on the cycle classes $s_{(x,y)}$ for $(x,y)\in G_{A_{\bar{k}}}(v)$ is given by
\[\sigma^*s_{(x,y)}=s_{(\sigma x, \sigma y)}\in H^4_\et(K_A(v)_{\bar{k}},\Q_\ell(2)).\]
\end{lemma}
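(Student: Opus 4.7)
The plan is to derive Lemma~\ref{lem:Galoiscohom} as the cycle-class version of Proposition~\ref{prop:Galois}, which already pins down how the geometric Galois action permutes the fixed loci of the involutions $\iota_{(x,y)}$.

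First I would note that, by Proposition~\ref{K3pts}, each $S_{(x,y)}\subset K_A(v)_{\bar{k}}$ is the unique two-dimensional component of $\Fix(\iota_{(x,y)})$, so it is canonically singled out by the involution and any geometric automorphism of $K_A(v)_{\bar{k}}$ must send K3 components to K3 components. Writing $\tilde\sigma\colon K_A(v)_{\bar{k}} \to K_A(v)_{\bar{k}}$ for the self-map induced by $\sigma\in\Gal(\bar{k}/k)$, this observation combined with Proposition~\ref{prop:Galois} applied to $\sigma^{-1}$ in place of $\sigma$ yields an equality of closed subschemes
\[
\tilde\sigma^{-1}\bigl(S_{(x,y)}\bigr) = S_{(\sigma x, \sigma y)}.
\]
I would then push this identity forward under the Galois-equivariant cycle class map $\CH^2 K_A(v)_{\bar{k}} \to H^4_{\et}(K_A(v)_{\bar{k}}, \Q_\ell(2))$ and invoke the standard identification of the Galois pullback $\sigma^*$ on $\ell$-adic cohomology with pushforward along $\tilde\sigma^{-1}$, giving the desired identity $\sigma^* s_{(x,y)} = s_{(\sigma x, \sigma y)}$.

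The main point to be careful about is the variance between $\tilde\sigma$, $\tilde\sigma^{-1}$, and $\sigma^*$; a single inversion error would flip $(\sigma x, \sigma y)$ to $(\sigma^{-1}x, \sigma^{-1}y)$. The $36$ isolated fixed points from Proposition~\ref{K3pts} drop out automatically, since they have codimension four on the fourfold and so contribute nothing to the codimension-two cycle class $s_{(x,y)}$.
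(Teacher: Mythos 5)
Your proposal is correct and follows essentially the same route as the paper: Galois equivariance of the cycle class map combined with Proposition~\ref{prop:Galois} to identify the preimage of $S_{(x,y)}$ under the self-map induced by $\sigma$ with $S_{(\sigma x,\sigma y)}$ (the paper substitutes $(\sigma x,\sigma y)$ into Proposition~\ref{prop:Galois} rather than replacing $\sigma$ by $\sigma^{-1}$, which is an immaterial difference). Your explicit attention to the variance conventions and to why only the two-dimensional component of the fixed locus matters is consistent with, and slightly more careful than, the paper's own write-up.
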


\begin{proof}
By \cite[Prop.~9.2]{Milnebook}, the cycle class map is Galois equivariant, so $\sigma^*s_{(x,y)}$ is the cycle class of $[\sigma^* S_{(x,y)}]\in \CH^2 K_A(v)_{\bar{k}}$. As in the proof of \cite[Prop.~9.2]{Milnebook}, we have that $[\sigma^* S_{(x,y)}]$ is the preimage of $S_{(x,y)}$ under $\sigma^*\colon K_A(v)_{\bar{k}} \to K_A(v)_{\bar{k}}$. By Proposition~\ref{prop:Galois}, $(\sigma^*)^{-1}(S_{(x,y)})=S_{(\sigma x, \sigma y)}$. Thus we conclude that $\sigma^*s_{(x,y)}=s_{(\sigma x, \sigma y)}$, as desired.
\end{proof}

\begin{defn}
For a finite Galois module $G$, let $\Q_\ell[G]$ be the $\Q_\ell$-vector space with basis given by $G$, where the action of the Galois group on $\Q_\ell[G]$ is determined by the action on $G$: for $\sigma \in \Gal(\kbar/k)$ and $\sum_{g_i \in G} a_i g_i \in \Q_\ell[G]$,
\[\sigma \cdot \sum_{g_i \in G} a_i g_i = \sum_{g_i \in G} a_i \left(\sigma \cdot g_i\right).\]
We call $\Q_\ell[G]$ the \textit{permutation representation}.
\end{defn}

Recall that when $k$ is not algebraically closed, the group $G_{A_{\kbar}}(v)$ naturally has the structure of a finite $\Gal(\kbar/k)$-module.

\begin{thm}\label{thm:middlecohom}
There is an isomorphism of Galois representations
$$H^4_{\et}(K_A(v)_{\bar{k}},\Q_\ell(2))\cong \Sym^2H^2_{\et}(K_A(v)_{\bar{k}},\Q_\ell(1)) \oplus V,$$
where $V$ is the $80$-dimensional subrepresentation of $\Q_\ell[G_{A_{\bar{k}}}(v)]$ such that
\[\Q_\ell[G_{A_{\bar{k}}}(v)]\cong V \oplus \Q_\ell,\]
and the trivial representation $\Q_\ell$ is the span of $(0,0)\in G_{A_{\bar{k}}}(v)$.
\end{thm}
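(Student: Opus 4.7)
My plan is to reduce the statement to an assertion in singular cohomology over $\C$ via the Artin comparison isomorphism, prove the decomposition there by combining Hassett--Tschinkel's result with the symmetry provided by the action of $G_A(v)$ on $K_A(v)$, and then transfer the decomposition back using the Galois equivariance of the cycle class map recorded in Lemma~\ref{lem:Galoiscohom}.

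To begin, I would assume without loss of generality that $\kbar\hookrightarrow\C$ and consider the hyperkähler fourfold $K_A(v)_\C$ of $\Kum_2$-type. By Hassett--Tschinkel's explicit description for $K_2(A)$ together with the deformation invariance of the $H^4$-decomposition (\cite[Theorem~2.1]{HasTsc}), there is an isomorphism of $\Q$-vector spaces
\[
H^4(K_A(v)_{\C},\Q) \cong \Sym^2 H^2(K_A(v)_{\C},\Q) \oplus V_\C,
\]
with $\dim V_\C = 80$. By Proposition~\ref{K3pts}, the $81$ symplectic involutions $\iota_{(x,y)}$ indexed by $(x,y)\in G_A(v)$ have fixed loci containing K3 surfaces $S_{(x,y)}$; because their cycle classes $s_{(x,y)}$ deform, along with the involutions themselves (\cite[Theorem~7.5]{KapferMenet}), to Hassett--Tschinkel's $81$ K3 classes in $K_2(A)$, they span a subspace $W\subseteq H^4(K_A(v)_\C,\Q)$ satisfying $W+\Sym^2 H^2 = H^4$ and $\dim(W\cap \Sym^2 H^2)=1$.

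The next key step is to identify the unique linear relation among the $s_{(x,y)}$. By Theorem~\ref{n4}, the group $G_A(v)$ acts on $K_A(v)$ via $L_y\otimes t_x^*$, and a direct conjugation computation shows this action permutes the $S_{(x,y)}$ simply transitively (sending $S_{(x,y)}$ to $S_{(x+2x_0,y+2y_0)}$ under the action of $(x_0,y_0)$). Hence $W$ is isomorphic to the regular representation of $G_A(v)$. Since $G_A(v)$ acts trivially on $H^2(K_A(v),\Z)$ by Theorem~\ref{thm:symplectic}, it acts trivially on $\Sym^2 H^2$, so $W\cap \Sym^2 H^2$ is $G_A(v)$-invariant; the only invariant line in the regular representation is the span of the sum of basis vectors, so
\[
W\cap \Sym^2 H^2 = \Span\bigl\{\textstyle\sum_{(x,y)\in G_A(v)} s_{(x,y)}\bigr\}.
\]
Passing to $\ell$-adic cohomology via the comparison isomorphism, which preserves cycle classes and intertwines the $\Sym^2$ and cup product structures, I would then define
\[
\psi\colon V \to H^4_{\et}(K_A(v)_{\kbar},\Ql(2)), \qquad (x,y)\mapsto s_{(x,y)} - s_{(0,0)},
\]
which is Galois-equivariant by Lemma~\ref{lem:Galoiscohom} together with Galois-invariance of $(0,0)$. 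A short linear-algebra argument using the relation $\sum s_{(x,y)}\in \Sym^2 H^2$ shows $\psi(V)\cap \Sym^2 H^2 = 0$: a combination $\sum_{(x,y)\neq (0,0)} a_{(x,y)}(s_{(x,y)}-s_{(0,0)})\in \Sym^2 H^2$ forces all $a_{(x,y)}$ equal and hence zero, since $81$ is invertible in $\Ql$. The Betti-number count $80 + \dim \Sym^2 H^2 = 80 + 28 = 108 = b_4(K_A(v))$ then yields the claimed Galois-equivariant direct sum.

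The main obstacle is the first stage, namely matching our moduli-theoretic $S_{(x,y)}\subset K_A(v)$, indexed by $G_A(v)$, with the Hassett--Tschinkel surfaces on $K_2(A)$ under a deformation. While \cite{HasTsc,KapferMenet} supply the abstract decomposition and the deformation of symplectic involutions and their fixed loci, one must check that the indexing by $G_A(v)$ is compatible across the deformation so that the identification $W\cong \Q_\ell[G_A(v)]$ as $G_A(v)$-representation is genuine; this is what allows the invariant-line argument to pin down the precise relation $\sum s_{(x,y)}\in \Sym^2 H^2$.
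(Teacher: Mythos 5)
Your proposal is essentially sound and shares the paper's skeleton---reduce to $\C$ via the comparison isomorphism, use the Hassett--Tschinkel/Kapfer--Menet deformation results to control the span of the $81$ K3 classes, then transport the decomposition back and invoke Lemma~\ref{lem:Galoiscohom} for Galois equivariance---but it differs in one genuine step. The paper simply quotes \cite[Prop.~4.3]{HasTsc} together with \cite[Thm.~7.5(ii), \S6.4]{KapferMenet} for the precise statement that the differences $z_{(x,y)}-z_{(0,0)}$ span an $80$-dimensional direct-sum complement to $\Sym^2H^2$, whereas you derive that precise statement yourself: from the coarser inputs (the $81$ classes are independent and, together with $\Sym^2H^2$, generate $H^4$) you identify the unique relation via representation theory, using the conjugation formula $(L_{y_0}\otimes t_{x_0}^*)\,\iota_{(x,y)}\,(L_{y_0}\otimes t_{x_0}^*)^{-1}=\iota_{(x+2x_0,\,y+2y_0)}$ (which is correct, and simply transitive since $2$ is invertible mod $3$), the triviality of the $G_A(v)$-action on the image of $\Sym^2H^2$, and the fact that the regular representation has a unique invariant line, spanned by the sum of the basis vectors. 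This buys a more self-contained and conceptually transparent identification of $W\cap\Sym^2H^2$, at the cost of an extra argument; the paper's citation is shorter but leans on the literature for exactly this point. Note also that the obstacle you flag at the end is largely a red herring for your own argument: because your representation-theoretic step is intrinsic to $K_A(v)$, you do not need the $G_A(v)$-indexing to be matched across the deformation---what you need from \cite{HasTsc,KapferMenet} is only the unlabeled statement that the $81$ fixed-locus K3 classes are independent and generate $H^4$ together with $\Sym^2H^2$ (plus injectivity of $\Sym^2H^2\to H^4$), which is the same deformation input the paper uses. Two small repairs: define $\psi$ on $\Q_\ell[G_{A_{\kbar}}(v)]$ (or on the augmentation ideal spanned by $(x,y)-(0,0)$) rather than on $V$ itself, its kernel being the span of $(0,0)$; and record that the distinctness of the $81$ involutions comes from Theorem~\ref{n4}, so the permutation of the surfaces really is simply transitive.
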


\begin{rmk}
As will be shown in Lemma~\ref{lem:H2}, the action of the Galois group on $H^2_{\et}(K_A(v)_{\bar{k}},\Q_\ell(1))$, and hence 
$\Sym^2H^2_{\et}(K_A(v)_{\bar{k}},\Q_\ell(1))$,
 is determined by the action on $H^2_{\et}(A_{\bar{k}},\Q_\ell(1))$.
\end{rmk}

\begin{proof}
  By Theorem~\ref{n4}, we have $3^4=81$ involutions
\begin{align*}
  \kappa_{(x,y)} \colon K_A(v)_{\bar{k}} &\to K_A(v)_{\bar{k}}\\
  \shF &\mapsto L_{y}\otimes t_{x}^*\kappa\shF
\end{align*}  
where $(x,y)\in G_{A_{\bar{k}}}(v)$. 

As in the proof of Proposition~\ref{K3pts}, let ${K}_A(v)_{\C} := K_A(v)\times_k \C$ and $\widetilde{\kappa}_{(x,y)}\colon {K}_A(v)_{\C} \to {K}_A(v)_{\C}$ the base change of $\kappa_{(x,y)}$. 
By Proposition~\ref{K3pts}, $\Fix(\widetilde{\kappa}_{(x,y)})$
contains
a K3 surface $Z_{(x,y)}\subset {K}_A(v)_{\C}$.
This gives 81 distinct K3 surfaces in ${K}_A(v)_{\C}$, where the distinctness follows from \cite[Thm.~2.1]{HasTsc}.  Via the cycle class map, these 81 surfaces give corresponding classes $z_{(x,y)}\in H^4({K}_A(v)_{\C}, \Q)$.

Similarly, 
there are 
K3 surfaces $S_{(x,y)}\subset K_A(v)_{\bar{k}}$ and corresponding cohomology classes $s_{(x,y)}\in H^4_{\et}({K}_A(v)_{\bar{k}}, \Q_\ell(2))$ such that $S_{(x,y)}\times_{\bar{k}} \C= Z_{(x,y)}\subset {K}_A(v)_{\C}$. 
Under the comparison and smooth base change isomorphisms
\[ H^4(K_A(v)_{\C}, \Q)\otimes_\Q \Q_\ell(2) \cong H^4_{\et}(K_A(v)_{\bar{k}}, \Q_\ell(2)),\]
the classes $z_{(x,y)}$ correspond to the classes $s_{(x,y)}$. 

By \cite[Thm.~7.5(ii)]{KapferMenet}, the pair $({K}_A(v)_{\C}, \widetilde{\kappa}_{(x,y)})$ is deformation equivalent to the pair $(K_2(A_{\C}), t_{\tau}\circ [-\Id]^{[[3]]})$ for some $\tau \in A_{\C}[3]$. In particular, these complex manifolds are diffeomorphic and so they have isomorphic cohomology rings. 
By \cite[Prop.~4.3]{HasTsc} (see also the discussion in \cite[\S6.4]{KapferMenet}), the $\Q_\ell$-span of $\{z_{(x,y)}-z_{(0,0)}\}_{(x,y) \in G_{A_\C}(v)}$ is an 80-dimensional vector space of $H^4(K_A(v)_{\C},\Q_\ell(2))$ which is a direct sum complement to the subspace $\Sym^2H^2(K_A(v)_{\C},\Q_\ell(1))$. 

Since the $s_{(x,y)}$ in $H^4(K_A(v)_{\bar{k}},\Q_\ell(2))$ correspond to
the $z_{(x,y)}$, it follows that \[V:=\Span_{\Q_\ell}\{s_{(x,y)}-s_{(0,0)}\}_{(x,y)\in G_{A_{\bar{k}}(v)}}\] 
is an $80$-dimensional subspace of $H^4_{\et}(K_A(v)_{\bar{k}},\Q_\ell(2))$ which is a direct sum complement to $\Sym^2H^2_{\et}(K_A(v)_{\bar{k}},\Q_\ell(1))$. 

By Lemma \ref{lem:Galoiscohom} we know that for $\sigma\in \Gal(\bar{k}/k)$,
$$\sigma^*(s_{(x,y)})=s_{(\sigma x,\sigma y)}.$$
Thus, 
$V$ is a Galois-invariant subspace of $H^4_{\et}(K_A(v)_{\bar{k}},\Q_\ell(2))$. Noting that $\Q_\ell[G_{A_{\bar{k}}}(v)]$ is semisimple by Maschke's Theorem---the Galois action factors through a finite group representation determined by the finite extension of $k$ over which $G_{A_\kbar}(v)$ is defined---and that $\sigma^*(s_{(0,0)})=s_{(0,0)}$, this shows that $V$ is the $80$-dimensional subrepresentation of $\Q_\ell[G_{A_{\bar{k}}}(v)]$ such that
\[\Q_\ell[G_{A_{\bar{k}}}(v)]\cong V \oplus \Q_\ell,\]
where the trivial representation corresponds to $(0,0)\in G_{A_{\bar{k}}}(v)$. Hence, $H^4_{\et}(K_A(v)_{\bar{k}},\Q_\ell(2))$ has the decomposition as stated. 
\end{proof}

\subsection{Results in positive characteristic via lifting}\label{sec:charp}
In
this section we observe that, 
because Kummer varieties $K_A(v)$
  defined over a field of positive characteristic lift to characteristic $0$ \cite{FuLi},
we may use Theorem~\ref{thm:middlecohom} to
give a similar description of
the middle cohomology. 

\begin{prop}\label{charp}
Suppose we have data as in Setting~\ref{defKv} where the base field
$k$ has characteristic $p>0$.
Then
$$H^4_{\et}(K_A(v)_{\bar{k}}, \Q_\ell(2))
\cong
\Sym^2H^2_{\et}(K_A(v)_{\bar{k}},\Q_\ell(1)) \oplus V',
$$
where $V'$ is the $80$-dimensional subrepresentation of $\Q_\ell[G_{A_{\bar{k}}}(v)]$ such that
\[\Q_\ell[G_{A_{\bar{k}}}(v)]\cong V' \oplus \Q_\ell,\]
and the trivial representation $\Q_\ell$ is the span of $(0,0)\in G_{A_{\bar{k}}}(v)$.
\end{prop}

\begin{proof}
  As explained in the proof of \cite[Prop.~6.9]{FuLi}, it is possible to lift $K_A(v)$ to characteristic $0$ by lifting its defining data.
That is,
the data $(A,v,H,L,M)$ defined over $k$  
has a lift
$(\mathcal{A}, v_W, \mathcal{H}, \mathcal{L}, \mathcal{M})$ to a complete discrete valuation ring $W$ of characteristic zero with residue field $k$ and field of fractions $F:=\Frac W$.
Note that all of this lifting data can be recovered from a lift of $(A,H,L)$: lifting $A$ automatically gives us a lift of $\hat{A}$, and lifting line bundles on $\hat{A}$ amounts to lifting their N\'eron--Severi class; a lift of the N\'eron--Severi class of $M$ is given by the N\'eron--Severi class of $\det(\Phi_{\mathcal P}(\mathcal{L}))$.
Call the specialization of $v_W$ to the generic fiber $v_F$.

There is a surjection of Galois groups
\begin{equation}\label{surj}
  \Gal(\bar{F}/F) \twoheadrightarrow \Gal(\bar{k}/k)
\end{equation}  
which is given by restricting automorphisms to the ring of integers of $\bar{F}$ and then passing to the quotient $\bar{k}$.
By the smooth base change theorem 
\cite[Exp.~XVI, Corollaire~2.2]{SGA}, for $\ell \neq p$
there are isomorphisms
\begin{equation}\label{smthbc}
\begin{aligned}
  H^2_{\et}(K_{A_{\bar{F}}}(v_F), \Q_\ell(1)) &\cong H^2_{\et}(K_{A}(v)_{\bar{k}}, \Q_\ell(1)),\text{ and}\\
  H^4_{\et}(K_{A_{\bar{F}}}(v_F), \Q_\ell(2)) &\cong H^4_{\et}(K_{A}(v)_{\bar{k}}, \Q_\ell(2)),  
\end{aligned}
\end{equation}
which are equivariant with respect to the action of $\Gal(\bar{F}/F)$ on the left and $\Gal(\bar{k}/k)$ on the right, compatible
with \eqref{surj}.

%
The isomorphisms of \eqref{smthbc} are compatible with the ring structure on cohomology, so the isomorphism $H^4_{\et}(K_{A_{\bar{F}}}(v_F), \Q_\ell(2)) \cong H^4_{\et}(K_{A}(v)_{\bar{k}}, \Q_\ell(2))$ restricts to an isomorphism 
\[\Sym^2 H^2_{\et}(K_{A_{\bar{F}}}(v), \Q_\ell(1)) \cong \Sym^2 H^2_{\et}(K_{A}(v)_{\bar{k}}, \Q_\ell(1)),\]
again compatible with the respective Galois group actions. 

Let the following be the decomposition given by Theorem~\ref{thm:middlecohom}:
$$H^4_{\et}(K_{A_{\bar{F}}}(v_F),\Q_\ell(2))\cong \Sym^2H^2_{\et}(K_{A_{\bar{F}}}(v_F),\Q_\ell(1)) \oplus V,$$
and let $V' \subset H^4_{\et}(K_{A}(v)_{\bar{k}}, \Q_\ell(2))$ be the vector space complement to \linebreak$\Sym^2 H^2_{\et}(K_{A}(v)_{\bar{k}}, \Q_\ell(1))$. Using the fact that $V$ is a $\Gal(\bar{F}/F)$ subrepresentation of $H^4_{\et}(K_{A_{\bar{F}}}(v_F), \Q_\ell(2))$, we conclude that
\[H^4_{\et}(K_A(v)_{\bar{k}}, \Q_\ell(2)) \cong \Sym^2 H^2_{\et}(K_A(v)_{\bar{k}}, \Q_\ell(1)) \oplus V'\]
as $\Gal(\bar{k}/k)$ representations. In particular, there is an isomorphism $V\cong V'$ which is equivariant with respect to the action of $\Gal(\bar{F}/F)$ on the left and $\Gal(\bar{k}/k)$ on the right, again compatible with \eqref{surj}.

The subgroup $G_{A_{\bar{F}}}(v_F)\leqslant (A_{\bar{F}}\times \hat{A}_{\bar{F}})[3]$ is given by
equations \eqref{LM} determined by $v_F$, which is part of our lifted data.
Thus, since the action of $\Gal(\bar{F}/F)$ on $V$ is given by $G_{A_{\bar{F}}}(v)$, the action of $\Gal(\bar{k}/k)$ on $V'$ must be the one determined analogously by
$G_{A_{\bar{k}}}(v)$. 
\end{proof}

\section{Relation to derived equivalences}
\label{sec:derived}

There are a number of results related to derived equivalences of 
smooth, projective symplectic
varieties. For example,
if $X$ and $Y$ are derived equivalent smooth complex projective surfaces, then $D(\Hilb^nX)\cong D(\Hilb^nY)$ \cite[Prop.~8]{Ploog}.
If $X$ and $Y$ are K3 surfaces, then the converse holds,
and
  if two moduli spaces of stable sheaves $M_X(v)$ and $M_Y(v')$ are derived equivalent, then $X$ and $Y$ are also derived equivalent \cite[Cor.~9.7]{Beckmann}.
If $X$ and $Y$ are derived equivalent K3 surfaces over any field $k$, then
the $\ell$-adic \'etale cohomologies of
any moduli $M_X(v)$ and $M_Y(v')$ of equal dimension are isomorphic as $\Gal(\bar{k}/k)$ representations \cite[Thm.~2]{Frei}.
However, it is still an open question when
such moduli 
are derived equivalent. 

In the direction of symplectic varieties of Kummer type,
complex abelian surfaces $A$ and $B$ are derived equivalent if and only if there is an isomorphism $K_1(A)\cong K_1(B)$ between their associated Kummer K3 surfaces
\cite{HLOY,Stellari}. This result has also been proved for abelian surfaces
over fields of odd characteristic \cite{LiZou}; the relation between Kummer surfaces and twisted derived equivalence of abelian surfaces has been examined in \cite[Thm.~6.5.2]{twistLiZou}.
While $A$ and $\hat{A}$ are always derived equivalent over their field of definition, it is not known 
exactly when there is a
derived equivalence between
the generalized Kummer fourfolds
$K_2(A)$ and $K_2(\hat{A})$. Recently, it was shown that, over an algebraically closed field of characteristic zero, they are derived equivalent when $A$ has a polarization of exponent coprime to $3$ \cite[Theorem 1]{Magni}.

Given these results, we ask the following two questions, which we examine in Sections \ref{sec:r} and \ref{sec:de}, respectively.

\begin{question}\label{qrouquier}
Suppose we have a derived equivalence of abelian surfaces $D^b(A)\cong D^b(B)$.  
How do the groups $G_A(v)$ introduced in Section \ref{sec:involutions}
interact with the
Rouquier isomorphism $A\times\hat{A}\simeq B\times\hat{B}$?
\end{question}  

\begin{question}\label{whende}
Under what conditions are irreducible symplectic fourfolds of Kummer type derived equivalent?
\end{question}

Throughout this section, we will assume we are working with data as in Setting~\ref{defKv} and that all varieties $K_A(v)$ are an Albanese fiber over symmetric line bundles.

\subsection{Compatibility with the Rouquier isomorphism}\label{sec:r}

\begin{prop}[{{Rouquier, cf.~\cite[Prop.~9.45]{HuyFMbook}}}]
  Let $A$ and $B$ be abelian varieties and $F\colon D(A) \to D(B)$ a derived equivalence.
There is an isomorphism
$f \colon A\times \hat{A} \to B \times \hat{B}$, called the Rouquier isomorphism, which maps $(a,\alpha)\in A\times \hat{A}$ to the unique element $(b,\beta)\in B\times \hat{B}$ so that the following diagram commutes:
\begin{equation}\label{R}
\vcenter{  
\xymatrix{
 D(A) \ar[r]^{F} \ar[d]_{L_\alpha \otimes t_a^*} & D(B) \ar[d]^{L_\beta \otimes t_b^*}
  \\
D(A) \ar[r]^{F} & D(B).  
}
}
\end{equation} 
\end{prop}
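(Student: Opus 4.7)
My plan is to interpret the statement as a claim about conjugation inside the group of derived autoequivalences. First I would observe that $T_A \colon (a,\alpha) \mapsto L_\alpha \otimes t_a^*$ defines a group homomorphism $A \times \hat{A} \to \Aut(D(A))$---one uses that $t_a^* L_\alpha \cong L_\alpha$ for $\alpha \in \hat{A}$---and likewise $T_B\colon B \times \hat{B} \to \Aut(D(B))$. Conjugation by $F$ gives a group isomorphism $c_F\colon \Aut(D(A)) \xrightarrow{\sim} \Aut(D(B))$, so producing the Rouquier isomorphism $f\colon A\times \hat{A} \xrightarrow{\sim} B\times \hat{B}$ amounts to showing that $c_F$ restricts to an isomorphism from the image of $T_A$ onto the image of $T_B$.

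The key step is to characterize this image. I would observe that each $L_\alpha \otimes t_a^*$ acts trivially on the Mukai lattice $\widetilde{H}(A,\Q_\ell)$: pullback by a translation induces the identity on cohomology, and tensoring with a degree-zero line bundle preserves Mukai vectors. Since a derived equivalence induces an isometry of Mukai lattices, the conjugate $\Psi_{(a,\alpha)} := c_F(L_\alpha \otimes t_a^*)$ also acts trivially on $\widetilde{H}(B,\Q_\ell)$. Moreover, the Fourier--Mukai kernels of $L_\alpha \otimes t_a^*$ vary algebraically in $(a,\alpha)$, and convolution with the kernel of $F$ preserves this, so $(a,\alpha) \mapsto \Psi_{(a,\alpha)}$ is an algebraic family of autoequivalences of $D(B)$ that equals the identity at $(a,\alpha) = (0,0)$.

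I would then invoke the classification of derived autoequivalences of an abelian variety (Orlov), according to which any autoequivalence of $D(B)$ acting trivially on $\widetilde{H}(B,\Q_\ell)$ has the form $L_\beta \otimes t_b^* \circ [2k]$ for some $(b,\beta,k)$. The algebraic family argument forces $k = 0$: shifts are a discrete subgroup of $\Aut(D(B))$ disjoint from $T_B(B \times \hat{B})$, so the family $\Psi_{(a,\alpha)}$ cannot pick up a shift while remaining algebraic and reducing to the identity at the origin. This yields $\Psi_{(a,\alpha)} = L_\beta \otimes t_b^*$ for a unique $(b,\beta)$, with uniqueness obtained by recovering $\beta = \Psi_{(a,\alpha)}(\O_B) \in \Pic^0(B)$ and then $b$ from the action on any skyscraper. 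The assignment $f(a,\alpha) := (b,\beta)$ is a group homomorphism because conjugation is, and a morphism of algebraic varieties because the family $\Psi$ is algebraic; applying the same argument to $F^{-1}$ produces a two-sided inverse, so $f$ is an isomorphism.

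The main obstacle I anticipate is invoking the classification of autoequivalences cleanly. Without it, one would need a direct argument showing that an autoequivalence of $D(B)$ which acts trivially on cohomology, sends $\O_B$ to a degree-zero line bundle, and carries skyscrapers to skyscrapers, must come from a translation composed with a line bundle twist. This in turn reduces to the claim that the only autoequivalence of $D(B)$ fixing every skyscraper $\O_x$ (up to isomorphism) is the identity plus possibly an even shift, which can be proved by studying the Fourier--Mukai kernel on the diagonal.
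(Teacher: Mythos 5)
You should first note that the paper does not prove this proposition at all: it is quoted as a known result of Rouquier (cf.\ Huybrechts, Prop.~9.45), so the relevant comparison is with that standard argument rather than with anything in the paper.

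Your proposal has a genuine gap at its crucial step, the ``classification'' of autoequivalences acting trivially on the Mukai lattice. The claim that any autoequivalence of $D(B)$ acting trivially on $\widetilde{H}(B,\Q_\ell)$ is of the form $L_\beta\otimes t_b^*\circ[2k]$ is false: the pullback $\iota^*=[-1]^*$ acts by $-1$ on odd cohomology and hence trivially on the even-degree Mukai lattice, yet it is not a translation--twist--shift. (The paper itself exploits exactly this fact in the proof of Theorem~\ref{thm:symplectic}, where triviality of $\iota^*$ on $H^{\mathrm{even}}(A,\Z)$ is the key point.) The correct statement in Orlov's work is that the kernel of $\Phi\mapsto f_\Phi$ --- that is, of the Rouquier representation you are trying to construct --- equals $\Z\times(B\times\hat{B})$; invoking that here is circular, or at least a much stronger input than the cohomological triviality you actually verified. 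Replacing $\widetilde{H}$ by the full cohomology including odd degrees removes the counterexample $\iota^*$, but then there is no off-the-shelf classification of that kernel in the form you need, and your connectedness argument only identifies the identity component once you already know the kernel is an algebraic group with identity component $B\times\hat{B}$ --- which is essentially the content of Rouquier's theorem. Your fallback reduces to showing that the conjugated functor $\Psi_{(a,\alpha)}$ sends skyscrapers to skyscrapers, which you do not establish and which is the actual crux. By contrast, the standard proof works directly with Fourier--Mukai kernels: the commutation condition in \eqref{R} defines a closed subgroup scheme of $(A\times\hat{A})\times(B\times\hat{B})$; both projections are injective because $t_a^*(-)\otimes L_\alpha\cong\id$ forces $(a,\alpha)=(0,0)$; and existence (surjectivity of the projections) is obtained by analyzing the conjugated kernel as an algebraic family through the identity, not by a cohomological criterion. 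Your algebraic-family idea is in the right spirit, but without a correct description of the cohomological kernel it does not close.
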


The following proposition 
gives some results addressing Question~\ref{qrouquier}.

\begin{prop}\label{prop:rouquier}
Let $A$ and $B$ be abelian surfaces over a field $k$, 
and let $v=(r,l,s)\in N(A)$ and $v'=(r',l',s')\in N(B)$.

Let $F\colon D(A)\to D(B)$ be a derived equivalence 
such that $F(v)=v'$.
Then the base change of the
Rouquier isomorphism to the algebraic closure $\bar{k}$
restricts to a group scheme isomorphism
\begin{equation}\label{fkbar}
  f_{\bar{k}}\colon G_{A_\kbar}(v)\xrightarrow{\sim} G_{B_\kbar}(v')
\end{equation}  
under any of the following conditions:
\begin{enumerate}[{\rm (a)}]
\item For any elements $\fF,\gG\in M_A(v)$ such that $\alb(\fF)=\alb(\gG)$,
  we have $\det(F(\fF))=\det(F(\gG))$ and $\det(\Phi_P\circ F(\fF))=\det(\Phi_P\circ F(\gG))$;
\item $F$ is a stability-preserving Fourier--Mukai transform; that is, if $E\in M_A(v)$, then $F(E)$ is in $M_B(v')$; or
\item $k=\C$ and $\frac{v^2}{2}= 3$ (i.e.~$K_A(v)$ is a fourfold).
\end{enumerate}
\end{prop}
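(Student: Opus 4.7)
The plan is to transport the defining property of $G_{A_\kbar}(v)$ across the Rouquier diagram \eqref{R}, using Remark~\ref{respectalllinebundles}, which characterizes $G_A(v)$ as exactly those $(x,y)$ for which $L_y \otimes t_x^*$ preserves the Albanese data $(\det, \det\circ\Phi_P)$ on any $D(A)$-object with Mukai vector $v$. Before splitting into cases, I would make a preliminary reduction: since $F$ preserves the Mukai pairing, $v^2 = (v')^2$, so by Theorem~\ref{n4} both $G_{A_\kbar}(v)$ and $G_{B_\kbar}(v')$ have order $n^4$ with $n := v^2/2$. As $f_\kbar$ is already an isomorphism of the ambient group schemes, it suffices to prove the inclusion $f_\kbar(G_{A_\kbar}(v)) \subseteq G_{B_\kbar}(v')$; equality then follows by counting.

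For case (a), fix $(x,y) \in G_{A_\kbar}(v)$ and set $(b,\beta) := f_\kbar(x,y)$. Given any $\fF \in M_{A_\kbar}(v)$, Theorem~\ref{n4} gives $\alb(L_y \otimes t_x^* \fF) = \alb(\fF)$, so hypothesis (a) yields
\[
\det F(L_y \otimes t_x^* \fF) = \det F(\fF) \quad\text{and}\quad \det\Phi_P\bigl(F(L_y \otimes t_x^* \fF)\bigr) = \det\Phi_P\bigl(F(\fF)\bigr).
\]
Substituting the Rouquier relation $F(L_y \otimes t_x^* \fF) \cong L_\beta \otimes t_b^* F(\fF)$ shows that $L_\beta \otimes t_b^*$ preserves the Albanese data of $F(\fF) \in D(B_\kbar)$, whose Mukai vector is $v'$. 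Applying Remark~\ref{respectalllinebundles} to $F(\fF)$ places $(b,\beta)$ in $G_{B_\kbar}(v')$. Case (b) reduces immediately to (a): a stability-preserving $F$ restricts to a morphism of projective varieties $M_{A_\kbar}(v) \to M_{B_\kbar}(v')$, and functoriality of the Albanese torsor \eqref{albtor} then forces the determinant identities demanded by (a).

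Case (c) is the main obstacle, since there is no direct assumption on how $F$ transforms Albanese data. My proposed approach is to invoke the classification of derived equivalences of complex abelian surfaces due to Orlov and Polishchuk: any such $F$ decomposes, up to shift, into a composition of translations $t_a^*$, twists by degree-zero line bundles $L_\alpha \otimes (-)$, and Fourier--Mukai transforms along isogenies. The first two kinds of atomic pieces manifestly satisfy (a), since they only relabel Albanese fibers, and the behavior of $\det$ and $\det\circ\Phi_P$ under isogeny-type Fourier--Mukai transforms is controlled by Yoshioka's calculations that already underlie the construction of \eqref{albtor}. Since composing derived equivalences composes Rouquier isomorphisms, the containment $f_\kbar(G_{A_\kbar}(v)) \subseteq G_{B_\kbar}(v')$ is stable under composition, and the fourfold hypothesis $v^2/2 = 3$ enters to constrain the Mukai vectors arising at intermediate steps to a short list, keeping the step-by-step check tractable. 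The hardest part of executing this plan will be tracking compatibility of the intermediate Mukai vectors across the decomposition without losing primitivity or positivity of $v$ at an intermediate stage.
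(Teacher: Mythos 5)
Your treatment of (a) and (b) is essentially the paper's own argument: transport the relation $F(L_\alpha\otimes t_a^*\fF)\cong L_\beta\otimes t_b^*F(\fF)$ across the Rouquier diagram, apply Remark~\ref{respectalllinebundles} to the object $F(\fF)\in D(B)$ with Mukai vector $v'$, and reduce (b) to (a) via the universal property of the Albanese morphism; the counting reduction to a single inclusion is also fine. The genuine gap is in (c). Your plan decomposes $F$ into atomic equivalences and asserts each satisfies (a); but for the pieces that actually change the Albanese data --- Fourier--Mukai transforms along isogenies and twists by line bundles of nonzero degree --- verifying (a) is precisely the open problem rather than a routine consequence of Yoshioka's computations. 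The paper says this explicitly in the remark following the proposition: the barrier to proving the statement in general is that it is not known that a general Fourier--Mukai equivalence respects the Albanese morphism on $M_A(v)$; it is known only in special cases (e.g.\ $\Phi_P$ itself, or $L\otimes(-)$ when $\NS(A)=\Z l$, by Gulbrandsen, as in Example~\ref{FMgroups}). So the decomposition strategy does not close the argument, it relocates the unknown into the generators. Moreover, the decomposition you invoke is stronger than what Orlov--Polishchuk provide: their structure theorem presents equivalences as an extension involving a group of isomorphisms of $A\times\hat{A}$, and generation by the specific elementary equivalences you list is not automatic; even granting it, intermediate Mukai vectors need not remain positive with $v$-generic polarizations available, so Theorem~\ref{n4} and Remark~\ref{respectalllinebundles} need not apply at intermediate stages. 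Finally, the hypothesis $v^2/2=3$ does not constrain the intermediate Mukai vectors to a short list, so in your scheme the fourfold assumption does no actual work.

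For contrast, the paper's proof of (c) uses the fourfold hypothesis in an essential, geometric way: by \cite[Prop.~4.3]{HasTsc}, on a fourfold $K_A(v)$ the fixed loci of $\iota^*$ and $\iota_{(a,\alpha)}=(L_\alpha\otimes t_a^*)\iota^*$ intersect nontrivially (e.g.\ $(0,\tau,-\tau)$ in $K_2(A)$), so there is a sheaf $\gG\in K_A(v)$ fixed by both, hence fixed by $L_\alpha\otimes t_a^*$. Then $\hH:=F(\gG)$ has Mukai vector $v'$ and is fixed by $L_\beta\otimes t_b^*$ via the Rouquier diagram, and Remark~\ref{respectalllinebundles} --- if $(b,\beta)\notin G_{B_\kbar}(v')$ the induced permutation of the fibers of the extended $\det\times\det\Phi_P$ map is fixed-point free --- forces $(b,\beta)\in G_{B_\kbar}(v')$. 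If you wish to pursue your route, you would need to prove condition (a) for each generator of the equivalence group, which is exactly the statement that is not currently known.
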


We note that the isomorphism \eqref{fkbar} 
implies that
the actions of $\Gal(\bar{k}/k)$ on $G_{A_\kbar}(v)$ and $G_{B_\kbar}(v')$
are isomorphic.

\begin{proof}
Let $(a,\alpha)\in G_{A_\kbar}(v)$. 
By Remark~\ref{respectalllinebundles},
to prove that $(b,\beta):=f_{\bar{k}}(a,\alpha)\in G_{B_{\bar{k}}}(v')$, it suffices to produce an element $\hH\in D(B)$ where $v(\hH)=v'$,
$\det(\hH)=\det(L_\beta\otimes t_b^*\hH)$, and
$\det(\Phi_P(\hH))=\det(\Phi_P(L_\beta\otimes t_b^*\hH))$.

Under condition (a), for any $\fF\in M_A(v)$, we may take 
$\hH:=F(\fF)$. In this case we have
$L_\beta\otimes t_b^*\hH=F(L_{\alpha}\otimes t_a^*\fF)$. Since
$$
\det(\fF) = \det( L_{\alpha}\otimes t_a^*\fF)
\quad\text{and}\quad
\det(\Phi_P(\fF)) = \det(\Phi_P( L_{\alpha}\otimes t_a^*\fF)),
$$
condition (a) allows us to conclude that $\hH$ has the needed property.

Under condition (b), $F$ restricts to an isomorphism $M_A(v)\to M_B(v')$ and by the universal property of the Albanese morphism there is a commutative diagram as follows:
\[
\xymatrix{
  M_A(v) \ar[d]^{\alb}\ar[r]^{F} & M_B(v')\ar[d]^{\alb}\\
  \Pic^l(A)\times \Pic^{\lambda}(\hat{A})\ar@{-->}[r]^{\exists}
  &\Pic^{l'}(A)\times \Pic^{\lambda'}(\hat{A})
}
\]  
Thus $F$ satisfies condition (a).

By \cite[Prop.~4.3]{HasTsc} if $K_A(v)$ is a fourfold, the intersection of the fixed loci of $\kappa$ and $(L_{\alpha}\otimes t_a^*)\kappa$ acting on $K_A(r,l,s)$ is nonempty. For instance, in $K_2(A)$ the intersection of
$\Fix(\kappa)$ and $\Fix(\kappa_{(\tau,0)})$ where $\tau\in A[3]$ (cf.~Lemma \ref{translation}) 
contains $(0,\tau,-\tau)$.

Let $\gG$ be an element in this intersection. It is thus fixed by $L_{\alpha}\otimes t_a^*$. Following the diagram above, we see that $\hH:=F(\gG)$ is fixed by $L_{\beta}\otimes t_b^*$ and thus $F$ satisfies the needed condition.
\end{proof}

\begin{rmk}
The barrier to a proof of Proposition~\ref{prop:rouquier} under more general conditions
  is that it is not known that a general Fourier--Mukai equivalence
 will respect the Albanese morphism acting on $M_A(v)$.

The proof of Proposition~\ref{prop:rouquier} under condition (c) hinges on the selection of an element fixed by the automorphisms from Theorem~\ref{n4}. We anticipate that analogous results are available
for higher-dimensional varieties of Kummer type.
For instance, in $K_{n-1}(A)$ the intersection between
$\Fix(\iota_{(0,0)})$ and $\Fix(\iota_{(\tau,0)})$ where $\tau\in A[n]$ 
contains $(0,\tau,2\tau,\ldots,(n-1)\tau)$. 
\end{rmk}

\begin{example}\label{FMgroups}
(a) For any abelian surface $A$ we have the Fourier--Mukai equivalence
$\Phi_P:D(A)\to D(\hat{A})$.
For any Mukai vector $v$ on $A$, condition (a) of Proposition~\ref{prop:rouquier} is satisfied for $F=\Phi_P$ since $\Phi_P\circ\Phi_P=\iota^*\circ [-2]$. 
If $v:=(r,l,s)$, then $v':=F(v)=(s,m,r)$ \cite[Lemma~3.1]{Yoshioka}, and $G_{A_\kbar}(v)$ and $G_{\hat{A}_\kbar}(v')$
are very closely related via the canonical identification between an abelian surface and the dual of its dual. By Theorem~\ref{n4}, the elements in $G_{A_\kbar}(v)$ satisfy the equations shown in \eqref{LM} and the elements of $G_{\hat{A}_\kbar}(v')$ satisfy the equations
\[
\phi_m(y)=-sx,\quad \phi_l(x)=ry
\quad\text{for } (y,x)\in \hat{A}\times\hat{\hat{A}}
\]
Thus $(x,y)\in G_{A_\kbar}(v)$ if and only if $(-y,x)\in G_{\hat{A}_\kbar}(v')$.

\smallskip

\noindent(b) Let $A$ be an abelian surface defined over a field $k$ of characteristic $0$ with $\NS(A)=\Z l$ and $l^2=2n$. By \cite[Lem.~3.6]{Gulbrandsen}
the Fourier--Mukai equivalence $L\otimes^{\mathbb{L}}(-)\colon D(A) \to D(A)$
satisfies condition (a) of Proposition~\ref{prop:rouquier}; in fact
$M_A(1,0,-n) \cong M_A(1,l,0)$. Moreover, by \cite[Prop.~3.5]{Yoshioka},
applying the Fourier--Mukai transform $\Phi_P$ followed by a shift $[-1]$ gives an isomorphism
$M_{A}(1,l,0)\cong M_{\hat{A}}(0,-\hat{l},-1)$, where $\hat{l}$ is the N\'eron-Severi class of $\Phi_P(1,l,0)$. If $l$ is an ample generator of $\NS(A)$, then $-\hat{l}$ is an ample generator of $\NS(\hat{A})$.

The shift functor $[1]$ acts on Mukai vectors by multiplication by $-1$, and in general $G_A(v)=G_A(-v)$. 
Thus, there are isomorphisms of group schemes
$$G_A(1,0,-n)\cong G_{A}(1,{l},0)
\cong G_{\hat{A}}(0,-\hat{l},-1),$$
though 
as discussed in Remark~\ref{gpexample},
the groups 
$G_A(1,0,-n)$ and $G_{A}(1,{l},0)$ are distinct subgroups of $(A\times \hat{A})[n]$. 
\end{example}  

\subsection{Derived equivalence of fourfolds of Kummer type}
\label{sec:de}

The following result provides some information on 
Question~\ref{whende} and allows us to produce an example where two such varieties over a number field $k$ are not derived equivalent over $k$.

\begin{prop}\label{decohomology}
Let $A$ and $B$ be isogenous abelian surfaces over a finitely generated field $k$ of characteristic $0$.
Let $v$ and $v'$ be Mukai vectors with $v^2=v'^2=6$, so that $K_A(v)$ and $K_B(v')$ are fourfolds. 
If $K_A(v)$ and $K_B(v')$ are derived equivalent over $k$,
then $\Ql[G_{A_\kbar}(v)]$ and
$\Ql[G_{B_\kbar}(v')]$ are 
isomorphic as $\Gal(\bar{k}/k)$-representations.
\end{prop}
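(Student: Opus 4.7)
My plan is to realize the derived equivalence at the level of Galois representations on even cohomology, match the $H^2$ and $H^6$ pieces via the isogeny of $A$ and $B$, cancel down to the middle $H^4$, and then isolate $V_A\cong V_B$ via Theorem~\ref{thm:middlecohom}.

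First, a derived equivalence $D(K_A(v))\simeq D(K_B(v'))$ over $k$ is realized by a Fourier--Mukai kernel $\mathcal P$ on $K_A(v)\times_k K_B(v')$, and the induced cohomological action $\alpha\mapsto \pi_{B*}\bigl(\pi_A^*\alpha\cdot \mathrm{ch}(\mathcal P)\cdot\sqrt{\mathrm{td}}\bigr)$ is Galois equivariant. Since $\mathrm{ch}(\mathcal P)$ is supported in even cohomological degree, this restricts to a Galois-equivariant isomorphism of total even cohomology
\[H^{\mathrm{even}}_\et(K_A(v)_{\bar k},\Ql)\;\cong\;H^{\mathrm{even}}_\et(K_B(v')_{\bar k},\Ql).\]

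Second, since $A$ and $B$ are $k$-isogenous, $H^2_\et(A_{\bar k},\Ql(1))\cong H^2_\et(B_{\bar k},\Ql(1))$ as Galois representations; by Lemma~\ref{lem:H2}, which expresses $H^2_\et(K_A(v)_{\bar k},\Ql(1))$ as a Galois-functorial construction out of $H^2_\et(A_{\bar k},\Ql(1))$ and the Galois-fixed Mukai vector $v$, this lifts to a Galois-equivariant isomorphism $H^2_\et(K_A(v)_{\bar k},\Ql(1))\cong H^2_\et(K_B(v')_{\bar k},\Ql(1))$, whence the same for $\Sym^2$ and, via Poincar\'e duality on these fourfolds, also for $H^6$. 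The extremes $H^0$ and $H^8$ match trivially, so cancellation in the Grothendieck group of semisimple $\Gal(\bar k/k)$-representations isolates the middle piece:
\[H^4_\et(K_A(v)_{\bar k},\Ql(2))\;\cong\;H^4_\et(K_B(v')_{\bar k},\Ql(2)).\]

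Third, applying Theorem~\ref{thm:middlecohom} on both sides and cancelling the matched $\Sym^2H^2$ summand yields $V_A\cong V_B$ as Galois representations; adjoining the trivial representation spanned by $(0,0)\in G_{A_{\bar k}}(v)$ on each side produces the desired isomorphism $\Ql[G_{A_{\bar k}}(v)]\cong\Ql[G_{B_{\bar k}}(v')]$.

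The main obstacle is verifying the semisimplicity hypotheses underlying each cancellation. The pieces $V_A,V_B$ are semisimple by Maschke's theorem since the Galois action factors through a finite quotient via its action on the finite group $G_{A_{\bar k}}(v)$, while the $H^2$-derived pieces are semisimple via Faltings's theorem on the semisimplicity of $H^1_\et(A_{\bar k},\Ql)$---this is where the hypothesis that $k$ be finitely generated is actually used. Once semisimplicity is secured for all summands in sight, every cancellation step is a standard Krull--Schmidt argument in the category of semisimple Galois representations.
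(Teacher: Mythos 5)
Your proposal is correct and follows essentially the same route as the paper: a Galois-equivariant isomorphism of the even (Mukai-lattice) cohomology induced by the kernel (the paper cites this rather than re-deriving it), the decomposition via Lemma~\ref{lem:H2}, Theorem~\ref{thm:middlecohom} and Poincar\'e duality, semisimplicity from Faltings--Zarhin plus Maschke, and cancellation of matched semisimple summands to isolate $V_A\cong V_B$. The only cosmetic difference is that you cancel in two stages (first isolating $H^4$, then $\Sym^2H^2$), whereas the paper decomposes the whole even cohomology at once and cancels in one step.
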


We begin with a lemma about the orthogonal complement to $v$ in the Mukai lattice.

\begin{lemma}\label{lem:vperp}
Let $A$ be an abelian surface over a field $k$ and $v$ a Mukai vector with $v^2\geq 2$. Let $v^\perp\subset \tilde{H}(A_{\kbar}, \Ql)$ be the orthogonal complement to $v$ under the Mukai pairing. Then there is a Galois equivariant isomorphism $v^\perp \cong H^2_{\et}(A_\kbar,\Ql(1))\oplus \Ql$.
\end{lemma}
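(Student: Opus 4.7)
The plan is to exploit the Galois-equivariance of the Mukai pairing together with the Galois-invariance of $v$ in order to exhibit an explicit Galois-equivariant isomorphism. Recall that the Mukai lattice decomposes as a Galois representation
\[
\tilde H(A_{\kbar}, \Ql) = H^0(A_{\kbar}, \Ql) \oplus H^2_{\et}(A_{\kbar}, \Ql(1)) \oplus H^4_{\et}(A_{\kbar}, \Ql(2)),
\]
in which the outer summands are one-dimensional trivial Galois representations. Writing $v = (r, l, s)$, the integers $r, s$ and the N\'eron--Severi class $l$ are all Galois-invariant, so $v$ itself is Galois-fixed. Since $\langle v, v \rangle = v^2 \geq 2 \neq 0$, the Mukai pairing supplies a Galois-equivariant orthogonal decomposition $\tilde H(A_{\kbar}, \Ql) = \Ql\cdot v \oplus v^\perp$, and in particular $\dim v^\perp = 7$.

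First I would handle the generic case $(r,s)\neq(0,0)$, constructing an isomorphism $\Ql \oplus H^2_{\et}(A_{\kbar}, \Ql(1)) \xrightarrow{\sim} v^\perp$ out of two Galois-equivariant pieces. The Galois-fixed line $\Ql\cdot (r, 0, -s) \subset v^\perp$ (the pairing check is $\langle v, (r,0,-s)\rangle = rs - rs = 0$) accounts for the $\Ql$ summand. If $r \neq 0$, the assignment $\alpha \mapsto (0, \alpha, (l\cdot\alpha)/r)$ is a Galois-equivariant section of the projection $v^\perp \to H^2_{\et}(A_{\kbar}, \Ql(1))$; when only $s \neq 0$ one uses the symmetric section $\alpha \mapsto ((l\cdot\alpha)/s, \alpha, 0)$. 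Injectivity is immediate from inspecting the $H^0$-component and surjectivity follows by a dimension count. All of the ingredients ($r$, $s$, $l$, and the intersection form) are Galois-invariant, so the resulting map is Galois-equivariant.

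The remaining case is $r = s = 0$, in which $v = (0, l, 0)$ with $l^2 = v^2 \neq 0$. Here $v^\perp = H^0(A_{\kbar},\Ql) \oplus l^\perp \oplus H^4_{\et}(A_{\kbar},\Ql(2))$, where $l^\perp \subset H^2_{\et}(A_{\kbar}, \Ql(1))$ is the orthogonal complement of $l$ under the intersection form. Because $l$ is Galois-fixed and non-isotropic, the pairing yields a Galois-equivariant decomposition $H^2_{\et}(A_{\kbar}, \Ql(1)) = \Ql\cdot l \oplus l^\perp$, and combining gives
\[
v^\perp \;\cong\; \Ql \oplus \Ql \oplus l^\perp \;\cong\; \Ql \oplus H^2_{\et}(A_{\kbar}, \Ql(1)),
\]
as required.

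The argument is essentially routine linear algebra once the Galois-equivariant orthogonal splitting is in hand, so there is no serious obstacle. The only subtlety is to avoid dividing by a zero component of $v$, which is precisely what forces the mild case split above; each of the explicit sections is then manifestly Galois-equivariant because it is constructed only from $r, s, l$, and the intersection pairing, all of which are Galois-invariant.
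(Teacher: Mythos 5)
Your argument is correct, and it takes a genuinely different route from the paper. You construct the isomorphism by hand: the Galois-fixed line $\Ql\cdot(r,0,-s)$ together with an explicit Galois-equivariant section of the projection $v^\perp \to H^2_{\et}(A_{\kbar},\Ql(1))$ (namely $\alpha \mapsto (0,\alpha,(l\cdot\alpha)/r)$ or its mirror using $s$), plus a separate treatment of $v=(0,l,0)$ where $v^\perp = H^0 \oplus l^\perp \oplus H^4$ and non-isotropy of $l$ splits off $\Ql\cdot l$. All the pairing checks and the injectivity/dimension count go through, and equivariance is clear since everything is built from $r$, $s$, $l$, and the cup product. The paper instead sets $w=(1,0,-n)$ with $n=v^2/2$, notes $w^\perp = H^2_{\et}(A_\kbar,\Ql(1))\oplus \Ql\langle(1,0,n)\rangle$, and observes that at least one of $(v\pm w)^2$ is nonzero, so reflection through $v-w$ or $v+w$ is a Galois-equivariant isometry of the Mukai lattice carrying $v$ to $\pm w$ and hence restricting to an isomorphism $v^\perp \xrightarrow{\sim} w^\perp$. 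The reflection trick is shorter, avoids any case analysis on the vanishing of $r$ and $s$, and produces an isometry (so it would also transport the quadratic form, though the lemma does not require this); your construction is more elementary and makes the equivariant splitting completely explicit, at the cost of the mild case split and of not respecting the Mukai pairing. Either way the statement only asks for an isomorphism of Galois representations, so both arguments suffice.
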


\begin{proof}
Let $w:=(1,0,-n)$ for $n:=\frac{v^2}{2}\geq 1$, and note that $$w^\perp = H^2_{\et}(A_\kbar,\Ql(1))\oplus \Ql\langle (1,0,n)\rangle.$$ We will show that $v^\perp \cong w^\perp$. For any $y \in \tilde{H}(A_\kbar, \Ql)$ with $y^2\neq 0$, let reflection through $y$ be given by
\[x \mapsto x - \frac{2\langle x,y\rangle}{y^2} y.\]
Observe that $(v-w)^2\neq 0$ or $(v+w)^2 \neq 0$, and so reflection through $v-w$ or $v+w$ gives an isometry $\tilde{H}(A_\kbar, \Ql) \xrightarrow{\sim} \tilde{H}(A_\kbar, \Ql)$ which sends $v$ to $\pm w$. Thus the isometry restricts to a Galois equivariant isomorphism $v^\perp \xrightarrow{\sim} w^\perp$.
\end{proof}

\begin{lem}
\label{lem:H2} 
Let $A$ be an abelian surface over a field $k$ and $v$ a Mukai vector with $v^2\geq 6$. 
Then there is a Galois equivariant isomorphism 
\[H^2_{\et}(K_A(v)_{\kbar},\Ql(1)) \cong H^2_{\et}(A_\kbar,\Ql(1))\oplus \Ql.\]
\end{lem}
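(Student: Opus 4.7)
The plan is to invoke a Galois-equivariant $\ell$-adic refinement of Yoshioka's description of $H^2(K_A(v))$ and then apply Lemma~\ref{lem:vperp}. Over $\C$, by \cite[Theorem~0.2(2)]{Yoshioka} there is an isomorphism
\[
\theta_v\colon v^\perp \xrightarrow{\sim} H^2(K_A(v)_\C,\Z),
\]
where $v^\perp \subset \widetilde{H}(A_\C,\Z)$ is the orthogonal complement of $v$ in the Mukai lattice. The map $\theta_v$ is the Mukai morphism associated to a (quasi-)universal family $\mathcal{E}$ on $A \times M_A(v)$: it sends $\alpha \in v^\perp$ to the codimension-one component of $p_{M*}\bigl(p_A^*(\alpha^\vee)\cdot \mathrm{ch}(\mathcal{E}) \cdot \sqrt{\mathrm{td}(A)}\bigr)$, followed by restriction to $K_A(v)$.

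I would first observe that this construction, being built from Chern classes of algebraic objects and from algebraic pushforwards, passes through the $\ell$-adic cycle class map to yield a morphism
\[
\theta_v\colon v^\perp\otimes\Ql \longrightarrow H^2_\et(K_A(v)_\kbar,\Ql(1)),
\]
which is an isomorphism by Artin's comparison theorem after base change to~$\C$, hence already an isomorphism over~$\kbar$. Combined with Lemma~\ref{lem:vperp}, this is enough to produce the underlying $\Ql$-vector space isomorphism in the statement.

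The main step is to establish Galois equivariance. The subtle point is that a true universal family on $A \times M_A(v)$ need not descend to~$k$, so there is no tautological reason that $\sigma^*\theta_v = \theta_v \sigma^*$ for $\sigma \in \Gal(\kbar/k)$. The fix is that a quasi-universal family $\mathcal{E}$ exists over~$\kbar$, and $\sigma^*\mathcal{E}$ is again a quasi-universal family; hence the two differ by tensoring with the pullback of a line bundle from $M_A(v)_\kbar$. A direct computation with Chern characters shows that such a modification changes $\theta_v(\alpha)$ by a term proportional to the Mukai pairing $\langle \alpha, v\rangle$, which vanishes on $v^\perp$. Therefore $\theta_v|_{v^\perp}$ is independent of the choice of quasi-universal family and intertwines the Galois actions on source and target.

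Combining with Lemma~\ref{lem:vperp} produces a chain of Galois-equivariant isomorphisms
\[
H^2_\et(K_A(v)_\kbar,\Ql(1)) \;\cong\; v^\perp\otimes\Ql \;\cong\; H^2_\et(A_\kbar,\Ql(1))\oplus\Ql,
\]
as required. The main obstacle is exactly the Galois-equivariance verification: the existence and isomorphism properties of $\theta_v$ are Yoshioka's theorem translated into the étale setting, but carefully checking that the ambiguity in the choice of quasi-universal family disappears on $v^\perp$, so that $\theta_v$ descends to a map of Galois representations, is the crux of the argument.
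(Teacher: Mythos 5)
Your proposal is correct and follows essentially the same route as the paper: the paper likewise deduces a Galois-equivariant isomorphism $H^2_{\et}(K_A(v)_{\kbar},\Ql(1))\cong v^\perp$ from \cite[Theorem~0.2(2)]{Yoshioka} via the comparison and smooth base change theorems, and then concludes by Lemma~\ref{lem:vperp}; your added verification that the ambiguity in the quasi-universal family dies on $v^\perp$ is a more detailed justification of the equivariance the paper simply asserts. The only small imprecision is that two quasi-universal families differ by tensoring with pullbacks of (possibly higher-rank) vector bundles rather than line bundles, and one divides by the similitude, but the same Chern-character computation shows the discrepancy is proportional to $\langle \alpha,v\rangle$ and so vanishes on $v^\perp$.
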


\begin{proof}
By \cite[Thm.~0.2(2)]{Yoshioka}, along with the comparison theorem for singular and \'etale cohomology and the smooth base change theorem, we have a Galois equivariant isomorphism $H^2_{\et}(K_A(v)_{\bar{k}},\Q_\ell(1))\cong v^\perp$ (In fact, this isomorphism exists over $\Z_\ell$, while the isomorphism of Lemma \ref{lem:vperp} may only exist over $\Q_\ell$). This combined with Lemma~\ref{lem:vperp} gives the result.
\end{proof}

\begin{proof}[{{Proof of Proposition \ref{decohomology}}}] 
Suppose that $K_A(v)$ and $K_B(v')$ are derived equivalent, so they 
have isomorphic sums of even cohomologies after Tate twists \cite[Lem.~3.1]{Honigs}: $\widetilde{H}(K_A(v)_\kbar,\Ql)\cong \widetilde{H}(K_B(v')_\kbar,\Ql)$.
We know that the zeroth and top cohomologies of $K_A(v)$ and $K_B(v')$ are trivial Galois representations, and Lemma~\ref{lem:H2} gives that 
\[H^2_{\et}(K_A(v)_\kbar, \Ql(1))\cong H^2_{\et}(A_\kbar, \Ql(1))\oplus \Ql.\]
By Theorem~\ref{thm:middlecohom} and Poinc\'are duality (cf.~\cite{Honigs3}), it follows that there is an isomorphism of Galois modules
\[\widetilde{H}(K_A(v),\Ql)\cong \Ql^{\oplus 4} \oplus H^2_{\et}(A_\kbar, \Ql(1))^{\oplus 2} \oplus \Sym^2H^2_{\et}(K_A(v)_{\bar{k}},\Q_\ell(1)) \oplus V_A,\]
where $V_A:=V$ from  Theorem~\ref{thm:middlecohom}. There is a similar isomorphism for $\widetilde{H}(K_B(v'),\Ql)$ involving $V_B$. 
We will check that these representations are semisimple, so that we can reduce to a comparison of $V_A$ and $V_B$.

By \cite[Thm.~3]{Faltings} and its extension to finitely generated fields of characteristic~0 in \cite[Thm.~4.3]{Zarkhin},
$H^2_{\et}(A_\kbar, \Ql)$ is a semisimple representation,  
and thus so is $\Sym^2H^2_{\et}(K_A(v)_{\bar{k}},\Q_\ell(1))$. The $\Gal(\kbar/k)$-representation $\Ql[G_{A_\kbar}(v)]$ factors through a finite group representation, determined by the finite extension of $k$ over which $G_{A_\kbar}(v)$ is defined, and so by Maschke's theorem it is also semisimple. Thus, the representation $\widetilde{H}(K_A(v),\Ql)$ is semisimple.
The same also holds for $\widetilde{H}(K_B(v'),\Ql)$, so applying Schur's Lemma, this allows us to cancel isomorphic representations in the direct sums for $\widetilde{H}(K_A(v),\Ql)$ and for $\widetilde{H}(K_B(v'),\Ql)$.
Since $A$ and $B$ are isogenous, there is an isomorphism $H^2_{\et}(A_{\bar{k}},\Q_\ell)\cong H^2_{\et}(B_{\bar{k}},\Q_\ell)$,  
so along with the observations above,
we are reduced to an isomorphism $V_A\cong V_B$. This extends to an isomorphism $\Ql[G_{A_\kbar}(v)] \cong \Ql[G_{B_\kbar}(v')]$, as desired.
\end{proof}

We use this result to give a negative answer to Question~\ref{whende}
in the case of generalized Kummer varieties
$K_2(A)$ and $K_2(\hat{A})$. 

\begin{cor}\label{cor:notderived}
For an abelian surface $A$ defined over a number field $k$ for which $\Ql[A[3]]$ and $\Ql[\hat{A}[3]]$ are not isomorphic as Galois modules over $k$, $K_2(A)\cong K_A(1,0,-3)$ and $K_2(\hat{A})\cong K_A(3,0,-1)$ are not derived equivalent over $k$. 
\end{cor}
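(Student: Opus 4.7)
The plan is to apply Proposition~\ref{decohomology} to the pair of Mukai vectors $v=(1,0,-3)$ and $v'=(3,0,-1)$ on a single abelian surface $A$. Theorem~\ref{n4} makes the associated groups completely transparent: in both cases the N\'eron--Severi classes $l,m$ vanish, so $\phi_l$ and $\phi_m$ are zero, and equations~\eqref{LM} reduce to $y=0$, $3x=0$ for $v$ and to $3y=0$, $x=0$ for $v'$. Projecting onto the nontrivial factor then gives Galois-equivariant identifications
\[
G_{A_{\bar k}}(1,0,-3)\;\cong\;A_{\bar k}[3]
\quad\text{and}\quad
G_{A_{\bar k}}(3,0,-1)\;\cong\;\hat A_{\bar k}[3].
\]
Since $A$ is trivially isogenous to itself, Proposition~\ref{decohomology} then says that a derived equivalence $D(K_2(A))\cong D(K_2(\hat A))$ over $k$ would force an isomorphism of $\Gal(\bar k/k)$-representations
\[
\Q_\ell[A_{\bar k}[3]]\;\cong\;\Q_\ell[\hat A_{\bar k}[3]].
\]

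To produce the desired counterexample it therefore suffices to exhibit a single abelian surface $A$ over a number field $k$ whose three-torsion modules for $A$ and $\hat A$ yield non-isomorphic permutation representations. Concretely, this amounts to finding a prime $\mathfrak p$ of good reduction at which the Frobenius fixed-point counts differ, i.e., at which $\#A(\kappa(\mathfrak p))[3] \neq \#\hat A(\kappa(\mathfrak p))[3]$, where $\kappa(\mathfrak p)$ denotes the residue field. When $k$ does not contain a primitive cube root of unity, the Weil pairing realizes $\hat A_{\bar k}[3]$ as a genuine Galois twist of $A_{\bar k}[3]^{\vee}$, leaving room for exactly such a discrepancy.

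The main obstacle is the explicit construction of such an $A$; a priori the two permutation characters could coincide even when the underlying Galois modules themselves are inequivalent, so one needs input beyond the formal representation-theoretic setup. This construction is supplied by the forthcoming work \cite{FHV} on Galois actions on torsion groups of abelian surfaces, which produces the required abelian surface and, via the reduction above, completes the proof.
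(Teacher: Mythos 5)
Your proposal is correct and follows essentially the same route as the paper: compute $G_{A_{\kbar}}(1,0,-3)\cong A_{\kbar}[3]$ and $G_{A_{\kbar}}(3,0,-1)\cong \hat{A}_{\kbar}[3]$, apply Proposition~\ref{decohomology} to reduce derived equivalence to an isomorphism of permutation representations $\Q_\ell[A_{\kbar}[3]]\cong\Q_\ell[\hat{A}_{\kbar}[3]]$, and invoke \cite{FHV} for an abelian surface over a number field where these representations differ. The only cosmetic differences are that you extract the groups directly from equations~\eqref{LM} rather than via the Fourier--Mukai identification in Example~\ref{FMgroups}(a), and you add a (correct) Chebotarev-style gloss on detecting the failure via Frobenius fixed-point counts.
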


\begin{proof}
We have $G_{A_\kbar}(1,0,-3)=A[3]$
and
by the discussion in  Example~\ref{FMgroups}(a),
$G_{A_\kbar}(3,0,-1)=G_{\hat{A}_\kbar}(1,0,-3)=\hat{A}[3]$.
The result then follows by Proposition \ref{decohomology}.
\end{proof}  

In \cite{FHV} the authors exhibit an abelian surface $A$ defined over a number field $k$ where $\Ql[A[3]]$ and $\Ql[\hat{A}[3]]$ are not isomorphic as Galois modules over $k$.

\begin{rmk}
If $A$ is an abelian surface as in the proof of
  Corollary~\ref{cor:notderived},
any derived equivalence between $K_2(A)$ and $K_2(\hat{A})$
would have to be defined over a field larger than $k$.
Moreover, the kernel of such a derived equivalence could not be constructed out of only universal bundles, since such bundles would naturally be defined over $k$, and the derived equivalence would descend. 
\end{rmk}

\begin{rmk}
The argument in Corollary~\ref{cor:notderived} cannot be used to rule out
derived equivalences between $K_2(A)$ and $K_2(\hat{A})$ in many contexts; for instance it does not work when $A$ is principally polarized, since such a polarization would give an isomorphism between $A[3]$ and $\hat{A}[3]$.

Proposition~\ref{decohomology} also holds for 
Kummer varieties over fields of positive characteristic
  that satisfy the hypotheses of Proposition~\ref{charp};
Tate's theorem gives the needed semisimplicity result \cite{TateThm}.
However, over a finite field in general, Tate's isogeny theorem implies there is an isomorphism between the Tate modules $T_{\ell}A$ and $T_{\ell}\hat{A}$. Thus it would not be possible to use the approach
of Corollary~\ref{cor:notderived}
to rule out a 
derived equivalence between $K_2(A)$ and $K_2(\hat{A})$ if $A$ were defined over a finite field.
\end{rmk}

\section{A $(1,3)$-polarized example: Lagrangian fibrations}
\label{sec:JacobiansK3}

In this and the following sections,
we consider an extended example where
we work over $\C$. 

Let $(A,L)$ be a polarized abelian surface where $L$ 
is symmetric, $\NS(A)=\Z l$ for $l:=c_1(L)$ and $l^2=6$, so 
$L$ is a $(1,3)$-polarization (see Claim~\ref{kernel}).
Let $K_A(0,l,s)$ be as in Setting~\ref{defKv}, and assume
$M\in \Pic^{m}(\hat{A})$ is also symmetric. 
We will see below that the spaces $K_A(0,l,s)$ are fibered over $\P^2$ in Jacobians of irreducible genus 4 curves, and while they can be identified fiberwise as $s$ varies, their global geometry differs: the discriminant of the Beauville--Bogomolov--Fujiki form on $\Pic(K_A(0,l,s))$ changes, so these moduli spaces are not in general birational.

We consider the fixed locus of $K_A(0,l,s)$ under the action of $\iota^*$, which we refer to as $\Fix(\iota^*)$. By Lemma~\ref{translation}, the fixed locus of any symplectic involution on $K_A(0,l,s)$ is a translation of $\Fix(\iota^*)$.
The moduli space $K_A(0,l,s)$
parametrizes
rank $1$ stable sheaves, or equivalently,
rank $1$ torsion-free sheaves, 
supported on irreducible curves in $A$.
When the supporting curves are smooth, these sheaves are line bundles on the curves, but we also encounter curves with nodal singularities, in which case the space of rank 1 torsion-free sheaves naturally compactifies the space of line bundles. 

In this section, we give necessary background and show that there is a natural fibration of 
$K_A(0,l,s)$ in abelian surfaces such that
$\Fix({\iota^*})\subset K_A(0,l,s)$ contains an elliptically fibered K3 surface.
In Section~\ref{sec:Jacobianssingular}, we will analyze 
the singular fibers of this K3 surface,
and in Section~\ref{sec:JacobiansIsolatedpts} we will analyze the isolated points of the fixed locus.

For comparison, we first give a description of $\Fix(\iota^*)$ in $K_A(1,0,-3)$ here.

\subsection{$\Fix(\iota^*)$ for $K_2(A)$}
\label{isolated}

The points in $K_2(A)\cong K_A(1,0,-3)$ consist of $0$-dimensional length $3$ subschemes of $A$ for which the support sums to~$0$. 
It was shown in \cite[Thm.~4.4]{HasTsc} that $\Fix(\iota^*)$ contains the
Kummer K3 surface
\begin{equation}\label{fixkummer}
\overline{\{(a_1,a_2,a_3)\mid a_1=0, a_2=-a_3, a_2\neq 0\}}
\end{equation}
as well as a unique isolated point
supported at the identity element $0$.

Any length $3$ subscheme in $\Fix(\iota^*)$ 
containing a point $a\in A$ in its support that is not fixed by $\iota^*$ must
be of the form $(0,a,-a)$, 
which is in the Kummer K3 surface described above. 
Thus, the remaining isolated points in
$\Fix(\iota^*)$ found by Tar\'i \cite{Tari} 
must consist of 
triples of three distinct points of $A[2]\cong (\Z/2\Z)^4$ that sum to~$0$.
The identity element cannot be contained in such a triple. 
Once we have chosen two of the points the third is forced, and length $3$ subschemes are unordered, 
so we have
\[
\tfrac13\tbinom{15}{2} =35
\]
such isolated points.

\subsection{Stable sheaves and compactifications of the Jacobian}\label{Abelmap}

Let $\Pic^d(C)$ be the set of degree $d$ line bundles on any curve $C$.
We write $\Pic^d_C$ for the Picard scheme of degree $d$ on a
curve $C$, and we use 
$\Picbar^d_C$ to denote the moduli scheme parametrizing rank $1$ degree $d$
torsion-free sheaves on the mildly singular curves $C$ that arise in this paper, which are all Gorenstein and moreover have planar singularities.

If $C$ is elliptic, $\Pic^d_C\cong C$ for any $d$, and this fact has some generalizations to compactified Jacobians of singular genus $1$ curves that we will find useful.

\begin{prop}[{{\cite[\S3, p.~14]{Kass},\cite[Ex.~39]{Esteves}}}]\label{genusone}
Let $C$ be a genus~$1$ reduced curve that is irreducible and nodal. Then  $\Picbar^d_C\cong C$  for any $d$. 
\end{prop}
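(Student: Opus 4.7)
The plan is to reduce to a fixed degree by twisting and then show that a natural Abel--Jacobi morphism $\alpha : C \to \Picbar^d_C$ is an isomorphism. Since $C$ is reduced, irreducible and nodal of arithmetic genus one, the equality $p_a = g + \delta$ forces either $\delta = 0$ (a smooth elliptic curve, handled by the classical Abel--Jacobi isomorphism $C \cong \Pic^d_C = \Picbar^d_C$) or $g = 0$ and $\delta = 1$ (a rational curve with a single node). Only the second case requires real argument, so I would focus on it below.

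First, tensoring with a fixed degree $(d-1)$ line bundle on $C$, which exists since $\Pic(C) \cong \G_m \times \Z$, gives an isomorphism $\Picbar^1_C \xrightarrow{\sim} \Picbar^d_C$, so I may reduce to $d = 1$. The Abel--Jacobi map $\alpha : C \to \Picbar^1_C$ is then defined scheme-theoretically by the dual $\mathcal I_\Delta^\vee$ of the ideal sheaf of the diagonal on $C \times C$; on a smooth point $p$ the fiber is $\O_C(p)$, while on the node $q$ the fiber is the unique non-invertible rank $1$ torsion-free degree $1$ sheaf supported at $q$.

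Next I would verify that $\alpha$ is bijective on $\C$-points. For a degree one line bundle $L$ on $C$, Riemann--Roch gives $\chi(L) = 1$; a local computation at the node, using that the local ring $\C[[x,y]]/(xy)$ has two minimal primes, shows that Cartier divisors supported at $q$ have even degree, so any non-zero section of $L$ vanishes at a unique smooth point, yielding $L \cong \O_C(p)$ and identifying $\Pic^1_C$ with $C \setminus \{q\} \cong \G_m$. On the boundary, any non-locally-free rank $1$ torsion-free stalk at the analytic node is isomorphic to the maximal ideal, so after twisting to degree $1$ only a single isomorphism class remains, yielding the single boundary point $\alpha(q)$.

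The main obstacle is promoting this bijection to a scheme-theoretic isomorphism, since both $C$ and $\Picbar^1_C$ are reduced projective nodal curves of arithmetic genus one, and bijective morphisms between such curves need not be isomorphisms. I expect to handle this via the universal Poincar\'e sheaf construction of \cite{Esteves}: the functor represented by $\Picbar^1_C$ is directly corepresented by $C$ itself through the sheaf $\mathcal I_\Delta^\vee$, which produces the inverse of $\alpha$ and yields the isomorphism on the nose. Alternatively, one can verify by hand that $\alpha$ induces an isomorphism on completed local rings at the node using an explicit free resolution of $\mathfrak m_q$, confirming that the tangent cone on both sides is a node and that the deformation theory of the non-invertible sheaf matches the analytic local structure of $C$ at $q$.
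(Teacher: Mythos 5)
The paper gives no argument for this proposition at all---it is quoted from the literature, with the proof outsourced to the cited passages of Kass and Esteves---so there is no internal proof to compare against; your outline is essentially the standard argument those references contain (reduce to a single degree by twisting, exhibit the Abel map, check it is bijective, then upgrade to an isomorphism of schemes), and the overall strategy is sound. Note that your universal family is more naturally taken to be $\mathcal{I}_\Delta$ itself rather than its dual: flatness of $\mathcal{I}_\Delta$ over the first factor is immediate from $0\to \mathcal{I}_\Delta\to \O_{C\times C}\to \O_\Delta\to 0$, whereas flatness and base change for the relative dual $\mathcal{I}_\Delta^\vee$ is a known but nontrivial point; mapping $C\to\Picbar^{-1}_C$ by $p\mapsto \mathcal{I}_p$ and then twisting is exactly the Abel map \eqref{abelmap} already used in the paper.

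Two concrete issues. First, the parity claim is false: effective Cartier divisors supported at the node need not have even degree. In $\C[[x,y]]/(xy)$ the element $x+y^2$ is a nonzerodivisor and cuts out a length-$3$ subscheme supported at the node. What is true, and is all you need, is that a nonzerodivisor lying in the maximal ideal vanishes to positive order on each of the two branches, so an effective Cartier divisor supported at the node has degree at least $2$ (equivalently, $\mathfrak{m}_q$ is not principal); hence the unique section of a degree-$1$ line bundle vanishes at a single smooth point. Second, the step you defer---promoting the bijection on points to a scheme-theoretic isomorphism---is the real content of the statement, and as written ``the functor is corepresented by $C$ through $\mathcal{I}_\Delta^\vee$'' is an assertion rather than an argument. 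To execute it you must either construct the inverse morphism $\Picbar^1_C\to C$ (for instance via the relative cokernel of the canonical section of a Poincar\'e sheaf, using $h^0=1$ in degree $1$, or via Fitting ideals, which requires knowing $\Picbar^1_C$ is reduced), or carry out the local computation showing $\Picbar^1_C$ has a node at the unique non-locally-free point and that $\alpha$ induces an isomorphism on completed local rings there. Since that is precisely what the cited sources supply, your proposal as it stands is closer to a reduction to the same references than to an independent proof; with the parity step corrected and the final step actually executed, it would be a complete and more self-contained argument than the paper's citation.
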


The Abel map \cite[Def.~1.0.5]{Kass} 
and a generalization of it
for compactified Jacobians of Gorenstein curves is
useful to our arguments. We use the development of this map given by Kass in \cite{Kass}, though we do not need the full power of Kass's theory. 

\textit{Generalized divisors} on $C$ are nonzero subsheaves
of the sheaf of the total quotient ring of $C$,
$I_D\subset \mathcal{K}$,
that are coherent $\O_C$-modules. These divisors generalize Cartier divisors, which they coincide with when $I_D$ is a line bundle. 
An \textit{effective generalized divisor} on $C$ is a $0$-dimensional closed subscheme $Z\subset C$, meaning the following generalization of the Abel map continues to have the intuitive quality of sending points to corresponding elements in $\Picbar^{-d}_C$ \cite[Def.~5.0.7]{Kass}, \cite[Thm.~8.5]{AK}:
\begin{align}
\label{abelmap}
\alpha\colon \Hilb^d_C &\to \Picbar^{-d}_C\\
[D] &\mapsto I_D\notag
\end{align}
When the degree $d$ is greater than or equal to the arithmetic genus $g$, this map is
surjective and generically has fibers isomorphic to $\P^{d-g}$.
If $D$ is an effective generalized divisor, $\alpha^{-1}([I_D])$ is the complete linear system $|D|$.
If $g=d$, the map is generically injective. The locus where $\alpha$ is non-injective in this case is 
\textit{the exceptional locus} $C^1_d$,
which consists of divisors $D$ whose image under the canonical map lies on a hyperplane. Such divisors $D,D'$ are linearly equivalent if there are canonical divisors $K,K'$ such that $K-D=K'-D'$.

Related to Proposition~\ref{genusone},
this generalized Abel map is an isomorphism when $C$ is a nodal genus-$1$ curve.

\subsection{The Lagrangian fibration of $K_A(0,l,s)$}
Since $l^2=6$, $K_A(0,l,s)$ is $4$-dimensional and,
since $\NS(A)=\Z l$ for $l:=c_1(L)$,
the curves $C\in|L|$ are irreducible, hence all rank 1 torsion-free sheaves are stable. Thus, $K_A(0,l,s)$
parametrizes rank 1 torsion-free sheaves on irreducible
curves $C\subset A$ where $C\in|L|$, which are generically line bundles.
Curves in this linear system have arithmetic genus~$4$ 
and by Riemann-Roch, the line bundles parametrized by $K_A(0,l,s)$
have degree $d:=s+3$.

We see that $h^0(A,L)=3$, and $h^1(A,L)=h^2(A,L)=0$.
Thus there is a map sending elements of $K_A(0,l,s)$ to their supports in the linear system~$|L|$:
\begin{align}\label{f}
f\colon K_A(0,l,s)&\to \P^2\cong |L|\\
\fF&\mapsto \supp(\fF) \notag
\end{align}

\begin{lemma}\label{phiC}
Let $C\in |L|$ and $h_C\colon C\hookrightarrow A$ be the natural inclusion.
The fiber of $f$ over $C\in |L|$
is the fiber over $M$ of the following surjective morphism:
\begin{align}\label{phiCdef}
\varphi_C\colon \Picbar_C^d &\to \Pic^{m}_{\hat{A}}\\
\fF &\mapsto \det(\Phi_P(h_{C*}\fF)).\notag
\end{align}
This fiber $f^{-1}(C)=\varphi_C^{-1}(M)$ is a translation of the
fiber of the following map over $0_A$:
\begin{align}\label{universal}
j_C:\Picbar^0_C\to A\\
I_D\to -\Sigma D,\notag
\end{align}
where $\Sigma D$ is the sum of points in the divisor $D$ using the group law on $A$.
\end{lemma}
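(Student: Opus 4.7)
The first claim, that $f^{-1}(C) = \varphi_C^{-1}(M)$, is immediate: any rank one torsion-free sheaf $\fF$ supported on $C \in |L|$ satisfies $\det(h_{C*}\fF) = \O_A(C) = L$ automatically, so the Albanese condition for sheaves supported on $C$ reduces to $\det(\Phi_P(h_{C*}\fF)) = M$. That $\varphi_C$ lands in $\Pic^m_{\hat{A}}$ is a Mukai-vector computation using \cite[Prop.~1.17]{MukaiFF}: $h_{C*}\fF$ has Mukai vector $(0,l,s)$, so $\Phi_P(h_{C*}\fF)$ has Mukai vector $(s,m,0)$, forcing its determinant into $\Pic^m_{\hat{A}}$.

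For the translation statement, my plan is to show that $\varphi_C$ is equivariant with respect to the tensor-product action of $\Pic^0_C$ on $\Picbar^d_C$ and the tensor-product action of $A \cong \Pic^0_{\hat{A}}$ on $\Pic^m_{\hat{A}}$, where equivariance is via the group homomorphism $\O_C(E) \mapsto L_{\Sigma E}$. Concretely, for a smooth point $p \in C$, the sequence $0 \to \fF \to \fF(p) \to k(p) \to 0$ remains exact after pushforward to $A$; applying $\Phi_P$, using the standard identity $\Phi_P(k(p)) = L_p$, and invoking multiplicativity of $\det$ along distinguished triangles yields
\[\det(\Phi_P(h_{C*}\fF(p))) \cong \det(\Phi_P(h_{C*}\fF)) \otimes L_p.\]
Linearity in $p$ then extends this to $\varphi_C(\fF \otimes \O_C(E)) \cong \varphi_C(\fF) \otimes L_{\Sigma E}$ for any degree-zero divisor $E$ supported on the smooth locus. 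Fixing a base point $\fF_0 \in \varphi_C^{-1}(M)$ and parametrizing $\Picbar^d_C$ as $\fF_0 \otimes \Pic^0_C$ then identifies $\varphi_C^{-1}(M)$ with $\{E : \Sigma E = 0\}$, which matches $j_C^{-1}(0_A)$ after reconciling signs with \eqref{universal}. Surjectivity of $\varphi_C$ follows from the equivariance together with surjectivity of $\Pic^0_C \to A$, $\O_C(E) \mapsto L_{\Sigma E}$, which holds because the ample curve $C$ generates $A$ as an abelian variety.

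The main obstacle I anticipate is the presence in $\Picbar^d_C$ of rank one torsion-free sheaves that fail to be locally free at nodal singularities of $C$; there the short-exact-sequence calculation above is not literal. I would handle this by first choosing $\fF_0$ in the dense open locus of line bundles in $\Picbar^d_C$, establishing the equivariance and translation picture there, and then extending across the compactification using that both $\varphi_C$ and the parametrization $E \mapsto \fF_0 \otimes \O_C(E)$ are morphisms of projective schemes whose agreement on a dense open implies agreement everywhere. Alternatively, the generalized Abel map and generalized divisors recalled in Section~\ref{Abelmap} provide a uniform language in which the computation $\Phi_P(k(p)) = L_p$ propagates to arbitrary members of $\Picbar^d_C$, circumventing the locally-free hypothesis altogether.
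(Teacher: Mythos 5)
Your proposal is correct and follows essentially the same route as the paper: the identification $f^{-1}(C)=\varphi_C^{-1}(M)$ via the automatic determinant condition (the paper phrases this through the relative compactified Jacobian $\Picbar^d_{\cC/\P^2}$), the key computation applying $\det\Phi_P(h_{C*}-)$ to $0\to\fF\to\fF(p)\to k(p)\to 0$ to get the translation structure, and surjectivity from equivariance together with surjectivity of $\Pic^0_C\to A$. The paper handles the singular members exactly by your second alternative, extending the twisting computation to ideal sheaves of generalized divisors, so no genuinely different ideas are involved.
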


When $C$ is smooth, $j_C$ 
is
the morphism given by the universal property of the Jacobian, which sends a line bundle, e.g. $\O(p-q)$, to $p-q$.

\begin{rmk}\label{jC}
  We will analyze the $\iota^*$-invariant portion of $\varphi_C^{-1}(M)$ in later results. This lemma shows that we may reduce to analyzing the
$\iota^*$-invariant portion of the 
fiber of $j_C$ over $0_A$, which we call $\ker j_C$, somewhat abusing notation in the singular case.
\end{rmk}

\begin{proof}
Recall that $K_A(0,l,s)$ is the fiber of the Albanese map over $(L,M)$:
\begin{equation}\label{alb.ex}
\alb\colon M_A(0,l,s)\to \Pic^l_A\times \Pic^{m}_{\hat{A}}.
\end{equation}
We consider the interaction of $\alb$ with $f$.
Let $\cC$ be the tautological family of curves in $|L|$.
We may identify the fiber of \eqref{alb.ex}
over $\{L\}\times \Pic^{m}_{\hat{A}}$ 
with the relative compactified Jacobian $\smash{\Picbar^d_{\cC/\P^2}}$,
which also has a map to supports $g:\smash{\Picbar^d_{\cC/\P^2}}\to |L|$.
Thus there is an inclusion
$K_A(0,l,s) \hookrightarrow \smash{\Picbar^d_{\cC/\P^2}}$ making the following diagram commute:
\begin{equation}\label{inclusion}
\vcenter{  
\xymatrix{
K_A(0,l,s) \ar@{^{(}->}[r] \ar[dr]_{f} & \Picbar^d_{\mathcal{C}/\P^2} \ar[d]_{g} \\
& |L|\cong \P^2. 
}
}
\end{equation}
For any curve $C\in|L|$, the fiber of $g$ over $C$ is $\Picbar^d_C$, which is isomorphic to $\Pic^d_C$ if $C$ is smooth. 
The morphism $\varphi_C$ given in the statement of the lemma is the restriction of the Albanese morphism \eqref{alb.ex} on $M_A(0,l,s)$ 
to $\Picbar^d_C$. 
Using \eqref{inclusion}, we see the fiber 
of $f$ over $C$ is equal to the fiber of
$\varphi_C$ over $M$.

Let $\L$ be a line bundle on $C$ and $p$  a point in $C$.
As in  \cite[\S17.2]{Polishchukbook}, applying  $\det(\Phi_P(h_{C*}-))$ to the short exact sequence
$$0\to \L \to  \L\otimes \O(p)\to k(p)\to 0 $$
implies
$$\varphi_C(\L\otimes \O(p))=\varphi_C(\L)\otimes P_{p}$$
where $P_p$ 
is the line bundle on $\hat{A}$ corresponding to $p\in C\subset A$.
Moreover, for any divisor $D$ on $C$, we have
$$\varphi_C(\L\otimes \O(D))=\varphi_C(\L)\otimes P_{\Sigma D},$$
where $P_{\Sigma D}$ is the line bundle on $\hat{A}$ corresponding to the point on $A$ that comes from summing $D$ using the group law on $A$.
If $C$ is singular, this argument may be extended to ideal sheaves of generalized divisors $D$.
Thus $\varphi_C$ is a translation of
the morphism $j_C$ of \eqref{universal}
by an element of $\Picbar^d(C)$.

If $C$ is smooth, 
the map induced by applying the universal property of the Jacobian
to the inclusion $C\hookrightarrow A$ is surjective \cite{BSgenus4curves},
thus $\varphi_C$ is surjective as well.

 Alternately, to prove $\varphi_C$ is surjective for smooth curves $C$, we may observe that $\varphi_C$ is equivariant under the action of $\Pic^0(A)$ and the action of $\Pic^0(A)$ on $\Pic^{m}\hat{A}$ is transitive.
For singular $C$, if we restrict $\varphi_C$ to $\Pic^d_C\subset \Picbar^d_C$, the same argument holds and so $\varphi_C$ is surjective.
\end{proof}

In \cite{Gulbrandsen}, Gulbrandsen shows that the map $f\colon K_A(0,l,-1)\to \P^2$
is a Lagrangian fibration. There is a similar Lagrangian fibration of $K_A(0,l,s)$ for any choice of $s$.

\begin{prop}\label{absurffib}
For any $s$, the map $f\colon K(0,l,s)\to \P^2$ is a Lagrangian fibration.
\end{prop}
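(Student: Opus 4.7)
The plan is to invoke Matsushita's theorem on fibrations of irreducible holomorphic symplectic manifolds. Recall that this theorem says: if $X$ is a projective irreducible holomorphic symplectic manifold and $f\colon X\to B$ is a surjective morphism with connected fibers onto a normal projective base with $0<\dim B<\dim X$, then $\dim B=\tfrac12\dim X$, a general fiber is an abelian variety, and the fibers are Lagrangian with respect to the symplectic form. Since $K_A(0,l,s)$ is a smooth projective hyperk\"ahler fourfold by Proposition~\ref{prop:Kvsmooth} and $\P^2$ is a smooth projective variety of dimension $2=\tfrac12\cdot 4$, the only real content is to check surjectivity and connectedness of fibers for the map~$f$ defined in~\eqref{f}.

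First I would verify surjectivity: by Lemma~\ref{phiC}, for every $C\in|L|$ the fiber $f^{-1}(C)=\varphi_C^{-1}(M)$ is a translate of $\ker j_C\subset\Picbar^0_C$. For smooth $C\in|L|$, $j_C\colon\Pic^0_C\to A$ is a surjective homomorphism of abelian varieties (a classical fact for smooth curves generating an abelian surface), so the fiber is non-empty and two-dimensional; for singular (still irreducible) $C$, non-emptiness persists after passing to the compactified Jacobian. Hence $f$ is surjective and the generic fiber has dimension~$2$.

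Next I would handle connectedness of the fibers. For generic smooth $C$, the identity component of $\ker j_C$ is a two-dimensional abelian subvariety of $\Pic^0_C$ (essentially the Prym-type complement of~$A$ in~$J(C)$), which already accounts for the full dimension of the fiber. To upgrade this to connectedness of $f$ itself, form the Stein factorization $f=g\circ h$ with $h\colon K_A(0,l,s)\to Y$ having connected fibers and $g\colon Y\to \P^2$ finite; since $\P^2$ is simply connected and since the generic fiber of $f$ is an irreducible abelian variety (translate of a connected two-dimensional abelian subvariety), the degree of $g$ is one, and Zariski's main theorem then forces $g$ to be an isomorphism, so $f$ itself has connected fibers. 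With this in hand, Matsushita's theorem applies and immediately yields the Lagrangian conclusion.

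The step I expect to be the main obstacle is the connectedness verification, since the kernel of a surjective morphism of abelian varieties need not be connected in general; carefully identifying $\ker j_C$ for smooth $C\in|L|$ as a connected abelian subvariety (a Prym variety of the embedding $C\hookrightarrow A$) is the only non-formal input, and it relies on $A$ having N\'eron--Severi rank one so that $C$ generates $A$ in a sufficiently strong sense. An alternative avoiding Matsushita would be to adapt Gulbrandsen's argument for $K_A(0,l,-1)$ directly, evaluating the Mukai symplectic form $\omega([\shF])(a,b)=\mathrm{tr}(a\cup b)$ on $\Ext^1(\shF,\shF)$ for $\shF$ supported on $C$ and showing that classes deforming $\shF$ along $\varphi_C^{-1}(M)$ (i.e.\ fixing the support and Albanese image) pair to zero; this reduces to a local computation on the curve $C$ and is insensitive to the twist~$s$, which is why the argument generalizes from $s=-1$ to arbitrary~$s$.
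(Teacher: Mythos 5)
Your proposal takes essentially the same route as the paper: invoke Matsushita's theorem, deduce surjectivity of $f$ from Lemma~\ref{phiC} (surjectivity of $\varphi_C$), establish connectedness of the fibers over smooth curves, and then upgrade to all fibers via Stein factorization. The only difference is how the connectedness over smooth $C$ is supplied---you sketch a Prym-type argument for the connectedness of $\ker j_C$ (correctly flagging it as the non-formal input), whereas the paper simply cites \cite[Lemma 2.6]{BSgenus4curves} for this fact.
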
  

\begin{proof}
By \cite[Thm.~1]{Matsushita}, it suffices to prove that $f$ is surjective  and its fibers are connected. By Lemma~\ref{phiC}, the fiber of $f$ over $C\in |L|$ is the fiber of $\varphi_C$ over $M$, which is non-empty since $\varphi_C$ is surjective. Thus $f$ is surjective. By \cite[Lem.~2.6]{BSgenus4curves}, the fibers of $f$ over smooth curves are connected.
By considering the Stein factorization of $f$, we conclude that $f$ has connected fibers.
\end{proof}  

\subsection{The Lagrangian fibration restricted to $\Fix(\iota^*)$}

Since $K(0,l,s)$ is fibered over $|L|$, we begin by analyzing the action of $\iota^*$ on $|L|$.

The restriction of the Weil pairing $\langle-,\phi_L(-)\rangle$ on points in $A$ to $A[2]$ yields a quadratic form $q_L\colon A[2]\to \mu_2$. 
Since $\ker(\phi_L)\cong (\Z/3\Z)^2$ (see Claim~\ref{kernel}),
it contains only the trivial element of $A[2]$, hence 
$q_L$ is nondegenerate.
Whether $q_L$ is even or odd 
as a quadratic form
(cf.~\cite[p.~63]{Polishchukbook}, \cite[\S3]{BolognesiMassarenti})
determines several facts about the action of $\iota^*$ on $|L|$. 

\begin{prop}\label{evenodd}
The action of $\iota^*$ on $H^0(A,L)$ decomposes into eigenspaces
$H^0(A,L)_+$ and $H^0(A,L)_-$
with eigenvalues $\pm 1$. Furthermore:
\[
\dim(H^0(A,L)_+)=
\begin{cases}
2 & \text{if $q_L$ is even} \\
1 & \text{if $q_L$ is odd} 
\end{cases}
\quad
\dim(H^0(A,L)_-)=
\begin{cases}
1 & \text{if $q_L$ is even} \\
2 & \text{if $q_L$ is odd} 
\end{cases} 
\]
We call the $1$-dimensional and $2$-dimensional eigenspaces, respectively,
$$V_{\hyp} \quad\text{and}\quad V_{\el}.$$
For generic $A$,
any curve $C\in \P V_{\hyp}$  is smooth and hyperelliptic, and 
there are $10$ points in $A[2]$ though which it passes. If $q_L$ is even, then $0_A$ is among these $10$ points.
The remaining $6$ points in $A[2]$ are the base locus of $\P V_{\el}$. 
If $q_L$ is odd, then $0_A$ is among these $6$ points.
\end{prop}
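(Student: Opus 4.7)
The plan is to combine Mumford's theory of symmetric line bundles---which identifies the sign by which $\iota^\ast$ acts on a fibre $L_x$ at $x \in A[2]$ with the value $q_L(x)$---with the holomorphic Lefschetz fixed-point formula. Since $L$ is symmetric, we may normalize the canonical isomorphism $\iota^\ast L \cong L$ so that its restriction to $L_{0_A}$ is the identity; this makes $\iota^\ast$ an involution on $H^0(A,L)$, yielding the $\pm 1$ eigenspace decomposition. The key input is that for each $x \in A[2]$ the induced map on $L_x$ is multiplication by $q_L(x) \in \{\pm 1\}$, so any $s \in H^0(A,L)_{\epsilon}$ with $q_L(x) = -\epsilon$ must vanish at $x$. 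Hence the base locus of $\P H^0(A,L)_{\epsilon}$ in $A[2]$ contains $q_L^{-1}(\epsilon)$, and since the normalization forces $q_L(0_A) = +1$, the origin lies in $\P H^0(A,L)_-$'s base locus but not in $\P H^0(A,L)_+$'s.

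Next, applying holomorphic Lefschetz to $\iota$ acting on $L$, using $H^i(A,L) = 0$ for $i>0$ and $\det(\mathrm{id} - d\iota_x) = 4$ at each $x \in \Fix(\iota) = A[2]$, yields
\[
\dim H^0(A,L)_+ - \dim H^0(A,L)_- \;=\; \tfrac{1}{4}\sum_{x \in A[2]} q_L(x).
\]
A nondegenerate quadratic form on $(\Z/2\Z)^4$ takes the value $+1$ on $10$ elements and $-1$ on $6$ if it is even (and the reverse if odd), so the right-hand sum equals $\pm 1$; combined with $h^0(A,L) = 3$, this gives the asserted $(2,1)$ or $(1,2)$ split. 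The $1$-dimensional eigenspace $V_{\hyp}$ is then the one whose sign $\epsilon$ satisfies $|q_L^{-1}(\epsilon)| = 10$, and its unique curve passes through those $10$ points of $A[2]$; $V_{\el}$ is the complementary $2$-dimensional eigenspace whose base locus contains the remaining $6$ points. A case check matching the sign of the eigenspace against $q_L(0_A)= +1$ shows that $0_A$ sits among the hyperelliptic $10$ exactly when $q_L$ is even, and among the elliptic $6$ exactly when $q_L$ is odd.

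Finally, for generic $A$, Bertini-type genericity in each eigenspace ensures that the unique member $C \in \P V_{\hyp}$ and a generic member of $\P V_{\el}$ are smooth. Since $C$ is $\iota^\ast$-invariant, the involution $\iota|_C$ has fixed locus $C \cap A[2]$, which consists of the $10$ points identified above, so Riemann--Hurwitz $2\cdot 4 - 2 = 2(2g(C/\iota|_C) - 2) + 10$ forces $g(C/\iota|_C) = 0$, whence $C$ is hyperelliptic; the analogous count on a smooth member of $\P V_{\el}$, which meets $A[2]$ in exactly $6$ points, yields $g(C/\iota|_C) = 1$, justifying the label \emph{elliptic}. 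That the base locus of $\P V_{\el}$ is no larger than the $6$ points of $A[2]$ already identified is a generic condition, since any extra base point would propagate to every curve in the pencil. I expect the main technical delicacy to lie in pinning down, for a symmetric ample line bundle, the identification of the sign of $\iota^\ast$ on fibres over $2$-torsion with the Weil-pairing quadratic form $q_L$ as defined in the excerpt; once the normalization of $\iota^\ast L \cong L$ is fixed, everything else reduces to the dimension count above and a Riemann--Hurwitz calculation.
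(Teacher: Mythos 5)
Your proposal is correct in substance, but it takes a genuinely different route from the paper: the paper does not argue this proposition at all, instead citing Bolognesi--Massarenti, the genus-4-curves reference, Naruki, Birkenhake--Lange and Polishchuk, where the eigenspace dimensions and the $10/6$ configuration are obtained from the general theory of symmetric line bundles (explicit theta bases and the standard formulas for $h^0(L)^{\pm}$). You instead give a self-contained derivation from two inputs: Mumford's normalized isomorphism $\iota^\ast L\cong L$, whose scalars on the fibres over $A[2]$ give the quadratic refinement $e^L_\ast$ of the Weil pairing with $e^L_\ast(0_A)=+1$, and the holomorphic Lefschetz fixed-point formula, which with $h^{>0}(A,L)=0$ and $\det(\mathrm{id}-d\iota_x)=4$ yields $\dim H^0(A,L)_+-\dim H^0(A,L)_-=\tfrac14\sum_{x\in A[2]}q_L(x)=\pm1$ according to the Arf invariant; together with $h^0=3$ this forces the $(2,1)$/$(1,2)$ split, the fibrewise signs force eigensections to vanish on the two-torsion points of opposite sign, and Riemann--Hurwitz identifies the quotient genera. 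Your route buys transparency (it shows exactly why the parity of $q_L$ governs everything) and it also makes explicit a point the paper leaves implicit: as literally defined, ``the restriction of $\langle-,\phi_L(-)\rangle$ to $A[2]$'' only determines the associated bilinear form, and reading $q_L$ as Mumford's $e^L_\ast$ for the chosen symmetric representative $L$ is the correct interpretation --- you rightly flag this normalization as the delicate step. What the paper's citation approach buys is precisely the genericity statements (e.g.\ smoothness and irreducibility of the relevant curves) that your sketch treats lightly.

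A few repairs are needed, none fatal. First, there is a consistent sign slip: with your convention that $\iota^\ast$ acts on $L_x$ by $q_L(x)$, an eigensection of sign $\epsilon$ vanishes where $q_L(x)=-\epsilon$, so the base locus of $\P H^0(A,L)_\epsilon$ meets $A[2]$ in $q_L^{-1}(-\epsilon)$, not $q_L^{-1}(\epsilon)$, and $V_{\hyp}$ is the eigenspace with $\lvert q_L^{-1}(-\epsilon)\rvert=10$; your subsequent conclusions ($0_A$ in the base locus of $H^0(A,L)_-$, and $0_A$ among the ten exactly when $q_L$ is even) agree with the corrected sign, so this is notational rather than mathematical. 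Second, smoothness of the unique member of $\P V_{\hyp}$ is not a Bertini statement --- the system is zero-dimensional --- and genuinely requires the ``generic $A$'' input from the literature, exactly the fact the paper later invokes in Section~\ref{abelfix}. Third, that the base locus of $\P V_{\el}$ is exactly the six points is cleaner via intersection numbers: two distinct members are irreducible (as $\NS(A)=\Z l$) and meet in a scheme of length $L^2=6$ already containing the six distinct forced two-torsion points, so the base locus is exactly those six reduced points. Finally, in the Riemann--Hurwitz step you may note that no extra fixed points need be excluded by hand: for a smooth genus-$4$ curve with involution, $6=2(2g'-2)+\#\Fix(\iota|_C)$ forces $\#\Fix(\iota|_C)\le 10$, so ``at least ten'' already pins down $g'=0$.
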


The space $V_{\hyp}$ was named for the fact the curves in it are hyperelliptic. The name $V_{\el}$ was chosen because, by Riemann--Hurwitz, quotients $C/\iota$ of smooth curves $C\in \P V_{\el}$ are elliptic.

\begin{proof}
Calculations on the dimensions of $H^0(A,L)_{\pm}$ and
the number of points through which these curves pass have been carried out in \cite[\S3]{BolognesiMassarenti}, \cite[\S3]{BSgenus4curves} and \cite{Naruki}.
See
\cite[Ch.~4]{BirkenhakeLange} and \cite[Ch.~13]{Polishchukbook} for further details. 
\end{proof}

By Proposition~\ref{K3pts}, $\Fix(\iota^*)$ consists of a K3 surface and $36$ isolated points. Here we study the geometry of the K3 surface.

\begin{prop}\label{ellfib}
The K3 surface in   $\Fix(\iota^*)$
is elliptically fibered. \end{prop}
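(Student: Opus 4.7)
The plan is to restrict the Lagrangian fibration $f\colon K_A(0,l,s) \to |L| \cong \P^2$ from Proposition~\ref{absurffib} to the K3 surface $S \subset \Fix(\iota^*)$. Since $L$ is symmetric and $f$ sends a stable sheaf to its support, the map $f$ is $\iota^*$-equivariant, so $f(S)$ lies in the fixed locus of $\iota^*$ acting on $|L|$. By Proposition~\ref{evenodd}, this fixed locus equals $\P V_{\hyp} \sqcup \P V_{\el}$, an isolated point together with a projective line $\P V_{\el} \cong \P^1$.

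First I would argue that $f(S) = \P V_{\el}$. By Lemma~\ref{phiC}, each fiber $f^{-1}(C)$ is a torsor under the abelian surface $\ker j_C$, and an involution on such a torsor has finite fixed locus; hence the two-dimensional $S$ cannot be contained in a single fiber. Since $S$ is connected, $f(S)$ is a connected one-dimensional closed subvariety of $\P V_{\hyp} \sqcup \P V_{\el}$, which forces $f(S) = \P V_{\el}$.

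Next I would identify the generic fiber of $f|_S\colon S \to \P V_{\el}$. For generic smooth $C \in \P V_{\el}$, the quotient $\pi\colon C \to E := C/\iota$ realizes $E$ as an elliptic curve, and Riemann--Hurwitz gives $\dim H^0(C,\Omega_C)^{\iota^*} = \dim H^0(E,\Omega_E) = 1$, so the identity component of $\Fix(\iota^*|_{\mathrm{Jac}(C)})$ is a one-dimensional abelian subvariety $\pi^* E \subset \mathrm{Jac}(C)$. A direct calculation then shows $\pi^* E \subset \ker j_C$: the divisor $\pi^{-1}(e) - \pi^{-1}(e_0)$ on $C$ is a sum of pairs $\{p, \iota(p)\}$, and since $\iota$ acts as $-1$ on $A$ we have $p + \iota(p) = 0$ in $A$, so $j_C$ annihilates $\pi^*\mathcal{O}(e - e_0)$. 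Therefore, by Lemma~\ref{phiC}, the fiber $f^{-1}(C) \cap S$ is a translate of $\pi^* E$, a smooth elliptic curve, yielding a proper surjective morphism from the K3 surface $S$ to $\P^1$ with smooth elliptic generic fiber --- by definition an elliptic fibration.

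The step expected to be the main obstacle is the inclusion $\pi^* E \subset \ker j_C$, which makes the one-dimensional $\iota^*$-fixed abelian subvariety of $\mathrm{Jac}(C)$ actually live inside the two-dimensional ambient fiber of $f$; without it the expected intersection would generically be empty by dimension count. The remaining verifications --- that the generic fiber is exactly one-dimensional, smooth and connected --- follow from $S$ being a smooth K3 surface mapping surjectively with connected generic fiber onto $\P V_{\el}$, while the detailed structure of the singular fibers is deferred to the next section.
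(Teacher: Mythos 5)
Your strategy is essentially the paper's: restrict the Lagrangian fibration $f$ to $\Fix(\iota^*)$, use Proposition~\ref{evenodd} to see that the base is $\P V_{\hyp}\sqcup\P V_{\el}$, and identify the generic fiber over $\P V_{\el}$ with a translate of $\pi^*\Pic^0(C/\iota)\cong C/\iota$ inside $\ker j_C$; your Riemann--Hurwitz count of $\iota$-invariant differentials on $C$ plays the role of the paper's identification of the $\iota^*$-action on $T_0\ker j_C$ with its action on $H^0(A,L)/[C]$, and your observation that $\pi^{-1}(e)-\pi^{-1}(e_0)$ sums to zero in $A$ is the paper's remark that such pullback bundles lie in $\ker j_C$. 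However, one step is genuinely false: the claim that \emph{an involution on a torsor under the abelian surface $\ker j_C$ has finite fixed locus}. Involutions of abelian surfaces can fix curves (e.g.\ $(-1)\times\id$ on a product of elliptic curves fixes four elliptic curves), and here the claim is contradicted by the rest of your own argument: for $C\in\P V_{\el}$ the fixed locus of $\iota^*$ in $f^{-1}(C)$ is one-dimensional---that is exactly what produces your elliptic fibers. Worse, if the claim were true, every fiber of $\Fix(\iota^*)\to|L|^{\iota^*}$ would be finite, so $\Fix(\iota^*)$ would be at most one-dimensional and there would be no K3 surface $S$ at all. So your argument that $f(S)$ is not a single point rests, as written, on a false lemma that proves too much.

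The gap is local and repairable. For instance: since $f$ is a Lagrangian fibration (Proposition~\ref{absurffib}), its fibers are isotropic for the symplectic form, whereas the form restricts nondegenerately to the surface fixed by the symplectic involution $\iota^*$, so $S$ cannot lie in a single fiber; alternatively, for smooth $C$ the fiber is an irreducible abelian-surface torsor, so a two-dimensional $S$ inside it would equal it, impossible for a K3, or one can invoke the paper's tangent-space eigenvalue computation, which shows $\iota^*$ never fixes a whole fiber. With that repaired, your identification of the generic fiber works at roughly the paper's level of rigor, though you should also say why the fixed locus inside the torsor $f^{-1}(C)=\varphi_C^{-1}(M)$---rather than inside $\Pic^0_C$---is a finite union of translates of $\pi^*\Pic^0(C/\iota)$: a fixed point exists in that fiber because $f|_S$ surjects onto $\P V_{\el}$, and $\iota^*$ acts on $\Pic^0_C$ as a group automorphism, so its fixed locus is a subgroup scheme with identity component $\pi^*\Pic^0(C/\iota)$, which your inclusion places inside $\ker j_C$. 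Connectedness of the generic fiber is asserted rather than proved, but the paper is similarly brief on that point and it is not needed for the statement that $S$ is elliptically fibered.
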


\begin{proof}
By Proposition~\ref{evenodd},
$$|L|^{\iota^*}=\P V_{\el} \sqcup \P V_{\hyp} \cong \P^1\sqcup \P^0,$$
and thus $\Fix(\iota^*)$
is fibered over $\P^1\sqcup \P^0$.

By Lemma~\ref{phiC}, the fiber of $K_A(0,l,s)$ over $C$ is the fiber of $\varphi_C$ over $M$.
Let $C\in |L|^{\iota^*}$ be smooth.
By Remark~\ref{jC}, to determine the dimension of the $\iota^*$-invariant parts of this fiber, 
we examine the eigenvalues of the action of $\iota^*$ on the tangent space of $\ker j_C$.

We have a short exact sequence on tangent spaces
\[0 \to T_0\ker j_C \to T_0\Pic^0_C \to T_0 A \to 0.\]
The tangent space $T_0A$ is $H^1(A,\O_A)$, and it is $2$-dimensional with $\iota^*$ acting as multiplication by $-1$. The tangent space $T_0\Pic^0_C$ is $H^1(C,\O_C) \cong H^0(C,\omega_C)^*$,
which is $4$-dimensional. 
On the other hand, tensoring 
the short exact sequence
\[
0\to\O_A(-C)\to\O_A\to\O_C\to 0.
\]
with $L\cong \O_A(C)$ gives
\[
0\to\O_A\to L\to\O_C(C)\to 0.
\]
By adjunction, $\O_C(C)\cong \omega_C$, so we have the following long exact sequence:
\[
0\to H^0(A,\O_A)\to H^0(A,L)\to H^0(C,\omega_C)\to H^1(A,\O_A)\to 0.
\]
The map $H^0(A,\O_A)\to H^0(A,L)$ sends the generator of the $1$-dimensional space $H^0(A,\O_A)$ to $[C]$. 
Putting all of this together,
the eigenvalues and dimensions of eigenspaces of $\iota^*$ acting on the tangent space of $\ker j_C$ are equal to those of $\iota^*$ acting on $H^0(A,L)/[C]$.

Suppose $C\in \P V_{\hyp}$. Then by Proposition~\ref{evenodd} the eigenvalues of $\iota^*$ acting on  $H^0(A,L)/[C]$ are both the same: 
if $q_L$ is even, they  are both $+1$ and if $q_L$ is odd,
they are both $-1$. In each case these eigenvalues are different from the eigenvalue of the action of $\iota^*$ on $[C]$.
If instead $C\in \P V_{\el}$, then for $q_L$ even or odd the eigenvalues of $\iota^*$ acting on  $H^0(A,L)/[C]$  are $+1$ and $-1$.

The tangent space of the fiber of $\Fix(\iota^*)$ over
$C\in \P V_{\hyp}\sqcup \P V_{\el}$ is isomorphic to the eigenspace
of $\iota^*$ acting on $T_0\ker j_C$ with the same eigenvalue as the action of $\iota^*$ on $[C]$.
Thus, $\Fix(\iota^*)$ has $0$-dimensional fibers over $\P V_{\hyp}$
and generically $1$-dimensional fibers over $\P V_{\el} \cong\P^1$.
For any $C\in \P V_{\el}$ that is smooth, the fiber of $j_C$
over $0_A$
is $2$-dimensional and so must be an abelian surface. Since $\iota^*$ acts with two different eigenvalues on the tangent space of $\ker j_C$, it must be, up to isogeny, the product of two elliptic curves. 

Generically, curves $C\in \P V_{\el}$ are smooth,
and as mentioned above,
$C/\iota$ is an elliptic curve.
Since the quotient map $C\to C/\iota$ is a ramified cyclic double cover mapping between smooth varieties, pullback 
induces an
inclusion $\Pic^0(C/\iota)\hookrightarrow \Pic^0(C)$.
We may represent any point in the image as $\O(x+\iota(x))$ for some $x\in C$. Such line bundles are in $\ker j_C$. Similarly, there is an inclusion of
$\smash{\Pic^d_{C/\iota}}$ into $(\varphi_C^{-1}(M))^{\iota^*}$, and by the tangent space calculation we see that generically these elliptic curves $\Pic^d_{C/\iota}\cong C/\iota$ are the $1$-dimensional part of the fiber of $f$ over $C$.
\end{proof}

In the case of $K_A(0,l,-1)$, we are able to give the following refinement by a different argument.

\begin{prop}\label{fixedKummer}
The fixed locus of $\iota^*$ on $K_A(0,l,-1)$ consists of the Kummer K3 surface $K_1(A)\cong K_1(\hat{A})$ and 36 isolated points.
\end{prop}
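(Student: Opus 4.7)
The plan is to identify $K_A(0,l,-1)$ with $K_2(\hat{A})$ via a Fourier--Mukai equivalence that intertwines the two $\iota^*$-actions, and then transport the description of $\Fix(\iota^*)$ recalled in Subsection~\ref{isolated}.

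Since $A$ is $(1,3)$-polarized, so is $\hat{A}$, and I would fix a symmetric ample line bundle $\hat{L}$ on $\hat{A}$ with $c_1(\hat L)^2 = 6$. Mirroring the construction of Example~\ref{FMgroups}(b) with $A$ and $\hat{A}$ interchanged, the composition
\[
F := \Phi_{P_{\hat{A}}}[-1] \circ (\hat{L}\otimes^{\mathbb{L}}(-)) \colon D(\hat{A}) \to D(A)
\]
is a Fourier--Mukai equivalence carrying the Mukai vector $(1,0,-3)$ on $\hat{A}$ to $(0,l,-1)$ on $A$, once biduality $\widehat{\widehat{A}}=A$ identifies $-\widehat{\hat{l}}$ with $l$. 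Both factors of $F$ preserve stability and respect the Albanese morphism, so $F$ induces an isomorphism of moduli spaces which, after at most a harmless translation by an element of $G_A(0,l,-1)$ (see Lemma~\ref{translation}), restricts to an isomorphism $K_2(\hat{A}) \xrightarrow{\sim} K_A(0,l,-1)$.

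Next I would verify that $F$ is $\iota^*$-equivariant. Tensoring with the symmetric $\hat{L}$ commutes with $\iota^*$; the shift $[-1]$ does so trivially; and the $(\iota_A\times\iota_{\hat A})^*$-symmetry of the Poincar\'e bundle $P_{\hat A}$ yields $\Phi_{P_{\hat A}} \circ \iota_{\hat{A}}^* \cong \iota_{A}^* \circ \Phi_{P_{\hat A}}$. Hence the induced isomorphism sends $\Fix(\iota^*) \subset K_2(\hat A)$ to $\Fix(\iota^*) \subset K_A(0,l,-1)$, and by the discussion of Subsection~\ref{isolated} applied to $\hat A$ in place of $A$, the former is the disjoint union of the Kummer K3 surface $K_1(\hat{A})$ and $36$ isolated points, so the same description holds for $K_A(0,l,-1)$.

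To conclude, I would invoke the Hosono--Lian--Oguiso--Yau and Stellari theorem quoted in Section~\ref{sec:derived}: since $\Phi_P$ gives a derived equivalence $D(A) \cong D(\hat{A})$, we have $K_1(A) \cong K_1(\hat{A})$. The principal technical obstacle is the sign and line-bundle bookkeeping in constructing $F$: one must verify that, after Poincar\'e-duality identifications on $N(\hat A)$, the image of $(1,0,-3)$ under $F$ is exactly the Mukai vector $(0,l,-1)$ with $l$ the prescribed ample class, and that the restriction of $F$ to the distinguished Albanese fiber $K_2(\hat A)$ can be moved to the specific fiber $K_A(0,l,-1)$ by translation in $G_A(0,l,-1)$, with Lemma~\ref{translation} ensuring the resulting fixed-point description is independent of this choice.
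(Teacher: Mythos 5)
Your proposal is correct and takes essentially the same route as the paper: the proof of Proposition~\ref{fixedKummer} invokes precisely the chain of derived equivalences of Example~\ref{FMgroups}(b) (tensoring by the polarization followed by $\Phi_P[-1]$), compatible with $\iota^*$, to identify $K_A(0,l,-1)\cong K_{\hat{A}}(1,0,-3)$, then quotes Hassett--Tschinkel and Tar\'i for the fixed locus of $K_2(\hat{A})$ and \cite{HLOY,Stellari} for $K_1(\hat{A})\cong K_1(A)$, exactly as you do. One small correction to your bookkeeping: a translation moving between distinct Albanese fibers must be by an element of $A\times\hat{A}$ \emph{not} in $G_A(0,l,-1)$, since elements of $G_A(v)$ preserve each fiber (Theorem~\ref{n4}, Remark~\ref{respectalllinebundles}); the conjugated involution on the target fiber still lies in $\ker\nu$, so Theorem~\ref{thm:symplectic} and Lemma~\ref{translation} give the independence of choices you want.
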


\begin{proof}
Hassett and Tschinkel \cite{HasTsc} and Tar\'i \cite{Tari} showed that the fixed locus of a symplectic involution on $K_2(A)$ consists of the Kummer K3 surface $K_1(A)$ and 36 additional isolated points.

As discussed in Example~\ref{FMgroups}(b), a series of derived equivalences
compatible with $\iota^*$
gives an isomorphism $K_A(0,l,-1)\cong K_{\hat{A}}(1,0,-3)$. 
Hence the K3 surface in the fixed locus of $\iota^*$ acting on $K_A(0,l,-1)$ is isomorphic to $K_1(\hat{A})$, which is isomorphic over $\C$ to $K_1(A)$ \cite{HLOY,Stellari}.
\end{proof}

\section{A $(1,3)$-polarized example: Singular fibers of an elliptic K3}
\label{sec:Jacobianssingular}

The proof of Proposition~\ref{ellfib} shows that the fibration $K_A(0,l,s) \to |L|$ restricts to a fibration $\Fix(\iota^*) \to \P V_{\el}$, and when $C\in \P V_{\el}$ is smooth, the fiber of $\Fix(\iota^*)$ over $C$ is isomorphic to $\Pic^d_{C/\iota}$.
It remains to examine the fibers in $\Fix(\iota^*)$ over
curves in $\P V_{\el}$
that are singular.
We show below that the singular fibers are the same as the singular fibers of a natural elliptic fibration of the Kummer K3 of $A$, which we now describe.

In \cite{Naruki}, Naruki analyzes an elliptic fibration of Kummer K3 surfaces that are constructed from $(1,3)$-polarized abelian surfaces. 
He uses the linear system $\P V_{\el}$ of Proposition~\ref{evenodd} to induce a linear system we will call $W$ on $K_1(A)$, which yields an elliptic fibration $K_1(A) \to \P^1$ whose fibers are generically $C/\iota$ for $C\in \P V_{\el}$.
Since $C\in \P V_{\el}$ must have arithmetic genus $4$ and pass through at least $6$ points in $A[2]$,
Riemann--Hurwitz
shows that
if $C/\iota$ is a smooth elliptic curve, then $C$ must be smooth as well and pass through exactly $6$ points in $A[2]$.

\begin{prop}[Naruki {{\cite[\S4]{Naruki}}}]\label{propnaruki}
Under a genericity assumption on $A$ \cite[p.~224, (GA)]{Naruki},
  the linear system $W$ has:
\begin{enumerate}[{\rm (i)}]  
\item Four singular fibers of type $I_1$.
\item Ten singular fibers of type $I_2$. There is one fiber of this type for each point of $A[2]$ that is not in the base locus of $\P V_{\el}$. The line in $K_1(A)$ that is the blow up of this point is contained in the fiber.
\end{enumerate}
\end{prop}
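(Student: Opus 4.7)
My plan is to follow Naruki's strategy by studying the pencil $\P V_{\el}$ directly on $A$, then pushing down through the quotient $A \to A/\iota$ and lifting to the resolution $K_1(A) \to A/\iota$. The generic member $C \in \P V_{\el}$ is a smooth genus-$4$ curve through exactly the $6$ base-locus points of $\P V_{\el}$, and Riemann--Hurwitz applied to the double cover $C \to C/\iota$ with those $6$ ramification points forces $C/\iota$ to be a smooth elliptic curve. Hence the generic fiber of $W$ on $K_1(A)$ is smooth of genus one, and we need only classify the degenerations.

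Part (ii) should be proved by focusing on the $10$ points $p \in A[2] \setminus \Bs(\P V_{\el})$. I would show that demanding passage through $p$ cuts out a single member $C_p \in \P V_{\el}$; for generic $A$ this $C_p$ is smooth at $p$. Since $\iota$ fixes $p$ and cannot act trivially on $C_p$, it acts as $-1$ on $T_p C_p$, giving a $7$-th ramification point of $C_p \to C_p/\iota$, so Riemann--Hurwitz forces $C_p/\iota \cong \P^1$. The image of $C_p$ in $A/\iota$ passes through the $A_1$-singularity at $\bar p$; after resolving to $K_1(A)$, the fiber of $W$ over $[C_p]$ consists of the strict transform $\widetilde{C_p/\iota} \cong \P^1$ together with the exceptional $(-2)$-curve $E_p$. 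A local computation at $\bar p$ (pulling back the two branches of the double cover $\widetilde A \to K_1(A)$ through the unique tangent direction of $C_p$) shows that $\widetilde{C_p/\iota}$ meets $E_p$ transversely in two points, producing a type $I_2$ fiber. These two components exhaust the fiber by an intersection-number check against the pullback of a line in $\P V_{\el}$.

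Part (i) then follows from an Euler characteristic count: $\chi(K_1(A)) = 24$, smooth fibers contribute $0$, and the ten $I_2$ fibers already contribute $10 \cdot 2 = 20$, leaving exactly $4$ to be accounted for. The genericity assumption (GA) on $A$ must be used here to rule out curves in $\P V_{\el}$ with worse degenerations: namely, curves with a cusp, with a node at one of the $10$ special points plus another singularity, with two nodes swapped by $\iota$, or reducible curves. Each is a codimension-$\geq 2$ condition on the pencil, so generically the remaining singular curves are irreducible with a single node at some $q \notin A[2]$ paired with $\iota(q)$; the quotient $C/\iota$ is then an irreducible nodal rational curve (one node), contributing an $I_1$ fiber to $W$. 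Euler characteristic forces their number to be $4$.

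The main obstacle is the local analysis at the $10$ two-torsion points to identify the fiber type as $I_2$ rather than a tangential configuration such as type $III$. The key is to verify transversality of $\widetilde{C_p/\iota}$ with $E_p$, for which I would work in the minimal resolution of $\Spec k[u,v]/(uv)$-type local model at $\bar p$, identifying the two preimages of the single tangent direction of $C_p$ at $p$ on the exceptional $\P^1$; the fact that $\iota$ acts by $-1$ on $T_pA$ while fixing the tangent direction of $C_p$ only up to sign is what produces two distinct meeting points and guarantees transversality.
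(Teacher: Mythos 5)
The central step of your part (ii) rests on a false premise and an impossible numerical count. You assert that the unique member $C_p\in \P V_{\el}$ through a two-torsion point $p$ outside the base locus is smooth at $p$, and that $p$ then becomes a ``$7$-th ramification point'' of $C_p\to C_p/\iota$. But for an involution on a smooth genus-$4$ curve, Riemann--Hurwitz gives $6=4g'-4+r$, i.e.\ $r=10-4g'$ ramification points, which is always even; $r=7$ admits no solution, and a smooth $\iota$-invariant member through the six base points and $p$ would in fact be forced to pass through ten points of $A[2]$ and be hyperelliptic, which genericity excludes from $\P V_{\el}$. What actually happens --- and what the paper's own analysis of the corresponding fibers of $\Fix(\iota^*)$ in Proposition~\ref{I2} depends on --- is that $C_p$ acquires a node at $p$: its normalization $C_p^\nu$ is a hyperelliptic curve of genus $3$, the double cover $C_p^\nu\to C_p^\nu/\iota\cong\P^1$ is ramified at eight points (the six base points plus the two preimages of the node), and the $I_2$ fiber is the union of this conic with the exceptional line over $p$, meeting at the two points determined by the two branches of the node. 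Your local transversality argument cannot repair this: since $-1$ acts trivially on the projectivized tangent space $\P(T_pA)$, a smooth $\iota$-invariant branch through $p$ has a genuinely fixed tangent direction, so its strict transform meets the exceptional $(-2)$-curve in a single point; there is no mechanism by which ``fixing the tangent direction only up to sign'' yields two intersection points, and a configuration of two rational curves meeting once transversally is not even an admissible elliptic fiber.

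Your part (i) is closer to the mark: ruling out worse degenerations by the genericity assumption and then using $\chi(K_1(A))=24$ to force exactly four $I_1$ fibers is the same bookkeeping the paper invokes in Theorem~\ref{thm:badfibers}, and the description of the $I_1$ members as irreducible invariant curves whose quotient has one node agrees with Proposition~\ref{I1}, where the curve $C$ has a pair of nodes at $x,\iota x$ --- though this contradicts your own list, which claims genericity excludes curves ``with two nodes swapped by $\iota$.'' You should also be aware that the paper does not prove this proposition at all; it is quoted from Naruki, so the relevant check is consistency with the nodal descriptions the paper uses in Propositions~\ref{I1} and~\ref{I2}, and it is exactly there that your smoothness claim at $p$ breaks down.
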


We show that the same is true for $\Fix(\iota^*)$:

\begin{thm}\label{thm:badfibers}
Let $A$ be an abelian surface satisfying the hypotheses at the beginning of the section
such that the singular fibers of $W$ consist of four fibers of type $I_1$ and ten fibers of type $I_2$ as in Proposition~\ref{propnaruki}. 
Then, for any $s$, $\Fix(\iota^*)\subset K_A(0,l,s)$ contains an elliptically fibered K3
whose singular fibers are of the same type.
\end{thm}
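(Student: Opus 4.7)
The plan is to match the elliptic K3 fibration on $\Fix(\iota^*) \subset K_A(0,l,s)$ with Naruki's fibration $W\colon K_1(A) \to \P V_{\el}$ by realizing the former as a relative compactified Jacobian of the latter, and then invoking the standard fact that a relative compactified Jacobian of a genus-one fibration has the same Kodaira types of singular fibers. Both fibrations are over the same base $\P V_{\el}\cong \P^1$ by Proposition~\ref{ellfib}; the Naruki fiber over a smooth $C \in \P V_{\el}$ is $C/\iota$, and the fiber of the restriction of $f$ to the K3 in $\Fix(\iota^*)$ is $\Pic^d_{C/\iota}$. So the two generic fibers are abstractly isomorphic elliptic curves and the $j$-invariant maps $\P V_{\el} \to \overline{M_{1,1}}$ coincide.

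To make the fiberwise identification rigorous, I would use Lemma~\ref{phiC} and Remark~\ref{jC} to rewrite the fiber of $f$ over $C$ as a translate of $\ker j_C \subset \Picbar^0_C$, and then apply pullback along the quotient $\pi_C\colon C \to C/\iota$. On smooth $C$ this reproduces the inclusion $\Pic^0(C/\iota) \hookrightarrow \Pic^0(C)$ already used in the proof of Proposition~\ref{ellfib}. On singular $C$---these are the fourteen curves in $\P V_{\el}$ whose Naruki quotient contributes a singular fiber of $W$---I would extend pullback to rank-1 torsion-free sheaves via the compactified Picard scheme described in Section~\ref{Abelmap}. Proposition~\ref{genusone} then identifies $\Picbar^d_{C/\iota}$ with $C/\iota$ itself when the quotient is an irreducible nodal curve of arithmetic genus one, yielding an $I_1$ fiber on the Fix side; the analogous classical computation for a cycle of two rational components yields an $I_2$.

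Putting these together, the restricted fibration on $\Fix(\iota^*)$ agrees with a relative compactified Jacobian of $W$ (up to a degree shift that depends on $s$ but preserves Kodaira types), so Proposition~\ref{propnaruki} immediately produces exactly four $I_1$ and ten $I_2$ singular fibers. The hardest step will be the matching at the $I_2$ fibers: in $K_1(A)$ the Naruki fiber consists of the proper transform of a singular $C/\iota$ glued to the exceptional line $E_p$ over a non-base 2-torsion point $p \in A[2]$, while on the Jacobian side the extra rational component must be realized as the locus of $\iota^*$-invariant rank-1 torsion-free sheaves on $C$ which fail to be locally free at the node coming from $p$. I would carry out this matching by a local analysis of how $\iota^*$-invariant torsion-free sheaves stratify near $p$, and use the Euler-characteristic identity $4\cdot 1 + 10\cdot 2 = 24 = e(\mathrm{K3})$ as a consistency check that no singular fiber is overlooked.
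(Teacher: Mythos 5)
Your strategy hinges on identifying the K3 in $\Fix(\iota^*)$ with a relative compactified Jacobian of Naruki's fibration $W$ on $K_1(A)$ and then citing preservation of Kodaira types, but that identification is precisely what is not available: the paper never constructs such a map and explicitly remarks at the end of Section~\ref{sec:Jacobianssingular} that no natural map from Naruki's K3 to $\Fix(\iota^*)$ is apparent. The actual proof works fiber by fiber inside $\Picbar^d_C$ for the genus-$4$ curves $C\in\P V_{\el}$, never on a quotient surface. Your fiberwise identification via pullback along $C\to C/\iota$ breaks down exactly at the ten fibers you must control: there the Naruki $I_2$ fiber is not $C/\iota$ but the union of the proper transform of $C/\iota$ with the exceptional line over the node $q\in A[2]$ in the blow-up $K_1(A)$, while $C/\iota$ itself is an irreducible rational curve with a single node, i.e.\ of arithmetic genus one, so Proposition~\ref{genusone} applied to it yields an $I_1$ curve and the image of $\Picbar^d_{C/\iota}$ misses an entire component of the fiber of $\Fix(\iota^*)$. (Your remark that the $j$-maps agree also cannot substitute for a fiber analysis, since the $j$-map does not determine Kodaira types.)

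You do flag the $I_2$ matching as the hardest step, but your proposed description of the extra component---the locus of $\iota^*$-invariant rank-$1$ torsion-free sheaves on $C$ failing to be locally free at the node---is not correct, and your sketch supplies no mechanism producing a second rational curve meeting the first in two points. In the paper's Proposition~\ref{I2} the fiber is exhibited, via Altman--Kleiman's presentation scheme $\pi\colon P\to\Pic^0_{C^\nu}$, as $\kappa\bigl(\pi^{-1}(\O_{C^\nu})\cup\pi^{-1}(\O_{C^\nu}(q_1-q_2))\bigr)$: both components consist generically of line bundles on $C$ (the second lying over the class $\O_{C^\nu}(q_1-q_2)$), and the non-locally-free sheaves are only the two points where the components meet. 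That there are exactly two such points uses the twisted gluing $\kappa\varepsilon'(I',q_1)=\kappa\varepsilon'(I'(q_1-q_2),q_2)$ together with the hyperellipticity of $C^\nu$, which makes $\O_{C^\nu}(q_1-q_2)$ $2$-torsion; without these inputs you cannot conclude type $I_2$ rather than some other configuration. Finally, the Euler-number identity $4+2\cdot 10=24$ is not merely a consistency check: since Propositions~\ref{I1} and~\ref{I2} only show containment of the stated curves in the fibers, the count is what rules out further one-dimensional components, and your argument would need it in the same essential way absent a genuine proof of the relative-Jacobian identification.
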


\begin{proof}
We split the proof into two parts. In Proposition~\ref{I1} below, we show that there are 4 fibers of type $I_1$. In Proposition~\ref{I2}, we show that there are 10 fibers of type $I_2$.

For topological reasons, this must be all of the one-dimensional locus of $\Fix(\iota^*)\subset K_A(0,l,s)$. Indeed, the $4$ singular fibers of type $I_1$ and $10$ singular fibers of type $I_2$ account for the fact that the topological Euler number of a K3 surface is $24$ \cite[Rmk.~11.1.12]{HuyK3book}.
\end{proof}

\begin{prop}\label{I1}
  Let $C\in \P V_{\el}$ be a curve inducing a genus $1$ singular curve $C/\iota$ of type $I_1$ in $W$. Then $\smash{\Picbar^0_{C/\iota}\cong \Picbar^d_{C/\iota}}$ is a singular curve of type $I_1$ and includes into $(\ker j_C)^{\iota^*}$.
\end{prop}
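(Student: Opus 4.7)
The plan is to establish three ingredients in sequence: identify $\Picbar^0_{C/\iota}$ as a type $I_1$ curve, promote this to an isomorphism $\Picbar^0_{C/\iota}\cong \Picbar^d_{C/\iota}$, and finally construct the inclusion into $(\ker j_C)^{\iota^*}$ using pullback along the quotient $\pi\colon C\to C/\iota$.

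Since $C/\iota$ is by assumption an irreducible nodal curve of arithmetic genus one---that is, a Kodaira $I_1$ fiber---Proposition~\ref{genusone} applies and gives directly that $\Picbar^0_{C/\iota}\cong C/\iota$, which is itself a nodal rational curve of type $I_1$. Tensoring by a fixed degree $d$ line bundle supported at smooth points of $C/\iota$ (which exist since only one point is singular) then identifies $\Picbar^0_{C/\iota}$ with $\Picbar^d_{C/\iota}$; this uses only that $C/\iota$ is Gorenstein with planar singularities, so that $\Picbar^d_{C/\iota}$ is a torsor under $\Pic^0_{C/\iota}$.

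For the embedding, I would use pullback of rank~$1$ torsion-free sheaves along $\pi$. Both $C$ and $C/\iota$ are Cohen--Macaulay of the same pure dimension and $\pi$ is finite and surjective, so $\pi$ is flat by miracle flatness; hence $\pi^*$ carries rank~$1$ torsion-free sheaves to rank~$1$ torsion-free sheaves. For any $\shF\in \Picbar^0_{C/\iota}$, the sheaf $\pi^*\shF$ is automatically $\iota^*$-invariant since $\pi\circ\iota=\pi$. Membership in $\ker j_C$ can be checked on the dense locus where $\shF=I_D$ for a generalized divisor $D$ supported at smooth points of $C/\iota$ away from the branch locus of $\pi$: there $\pi^*\shF=I_{\pi^{-1}(D)}$ and $\pi^{-1}(D)=\sum_i(x_i+\iota(x_i))$ sums to $0$ in $A$ using the group law, so $j_C(\pi^*\shF)=0$; since lying in $\ker j_C$ is a closed condition, it extends across all of $\Picbar^0_{C/\iota}$. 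Injectivity of $\pi^*$ on isomorphism classes follows from the projection formula $\pi_*\pi^*\shF\cong \shF\otimes\pi_*\O_C$ together with the eigenspace decomposition $\pi_*\O_C \cong \O_{C/\iota}\oplus N$ under $\iota^*$, which exhibits $\shF$ as a direct summand of $\pi_*\pi^*\shF$.

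The main obstacle I anticipate is analyzing the pullback of torsion-free (but non-locally-free) sheaves at the singular points of $C$. Since the node of $C/\iota$ is not in the branch locus (branch points must be smooth on the quotient because the local ring $k[[t]]^{\iota}$ is $k[[t^2]]$), $\pi$ is \'etale over the node and so the node lifts to a pair of nodes on $C$ that are swapped by $\iota$. Describing $\pi^*$ at such sheaves, and checking that the resulting map on compactified Jacobians really is a scheme-theoretic embedding onto a closed subvariety of $\Picbar^0_C$ of the expected $I_1$ type, requires some formal-local bookkeeping; flatness of $\pi$ is the key technical input that makes this tractable. An alternative route, which may be cleaner, is to work via the generalized Abel map $\alpha\colon C/\iota\xrightarrow{\sim}\Picbar^0_{C/\iota}$ of Section~\ref{Abelmap} and describe the inclusion at the level of generalized divisors on $C/\iota$ and their pullbacks, which avoids direct sheaf-theoretic computations at the singular points.
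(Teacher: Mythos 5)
Your overall strategy (pull back rank~$1$ torsion-free sheaves along $\pi\colon C\to C/\iota$, check $\iota^*$-invariance and membership in $\ker j_C$ on a dense locus, and get the $I_1$ shape from Proposition~\ref{genusone}) is close in spirit to the paper's proof, which also realizes the inclusion as pullback. But the key geometric input is not justified as written. You assert that the node of $C/\iota$ is not in the branch locus, hence lifts to two nodes of $C$ swapped by $\iota$, citing only $k[[t]]^{\iota}=k[[t^2]]$; that computation applies to \emph{smooth} fixed points and does not exclude the competing configuration in which $C$ has a single node at an $\iota$-fixed $2$-torsion point. In that case $\iota$ preserves each branch (acting by $-1$ on each), the invariant local ring is again nodal, so the node of $C/\iota$ \emph{is} the image of the fixed locus, and moreover $\pi$ is not flat over it (the fiber there has length $3$, not $2$) --- which also shows that your blanket appeal to miracle flatness is invalid, since that criterion requires a regular base and $C/\iota$ is nodal (compare: normalization of a nodal curve is finite, surjective, between Cohen--Macaulay curves of equal dimension, yet not flat). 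The fixed-node configuration is exactly the one that produces the $I_2$ fibers of Proposition~\ref{I2}, so excluding it genuinely requires the $I_1$ hypothesis. The paper does this at the outset with Riemann--Hurwitz for singular curves: for an $I_1$ fiber the only fixed points of $\iota$ on $C$ are the six base points, forcing $C$ to have geometric genus $2$ and two nodes exchanged by $\iota$; only after that is the \'etaleness over the node, and hence the flatness you rely on, available.

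Two smaller points. Your injectivity argument stops too early: from $\pi^*\shF\cong\pi^*\shG$ and $\pi_*\pi^*\shF\cong\shF\oplus(\shF\otimes N)$ you only get $\shF\oplus(\shF\otimes N)\cong\shG\oplus(\shG\otimes N)$, and concluding $\shF\cong\shG$ still needs indecomposability of rank~$1$ torsion-free sheaves (Krull--Schmidt) plus a degree comparison to rule out $\shF\cong\shG\otimes N$. The paper instead proves injectivity on the line-bundle locus by the five lemma applied to the normalization sequences (the $\C^*\to\C^*\oplus\C^*$ diagonal coming from the two nodes) and then checks by hand that the unique non-locally-free sheaf of $\Picbar^0_{C/\iota}$ pulls back to the non-locally-free sheaf $\L([x]+[\iota x])$, which cannot collide with line bundles, is $\iota^*$-fixed, and lies in $\ker\varphi_C$; this also replaces your density-plus-closedness step, which presupposes that $\pi^*$ has been constructed as a morphism of compactified Jacobians (the bookkeeping you flag but do not carry out).
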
  

\begin{proof}
By assumption, 
the curve $C/\iota$ is of type $I_1$, hence has arithmetic genus $1$ with one nodal singularity.
Applying the Riemann--Hurwitz formula for singular curves
\cite[(1.2)]{GarciaLax} 
to the double cover
$C\to C/\iota$, we see the arithmetic genus $4$ curve $C$ has geometric genus $2$ with $6$ ramification points,
so it must have two singular points that are exchanged by $\iota$. We call these points $x$ and $\iota x$
and then write $[x,\iota x]$ for the singular point of $C/\iota$.

Consider the induced map on the normalizations of these curves: $C^{\nu}\to (C/\iota)^{\nu}$. Since this is a ramified double cover of curves, the pullback map
$\Pic^0((C/\iota)^{\nu})\to \Pic^0(C^{\nu})$ 
is an inclusion.
We have the following map between short exact sequences of groups:
\[
\xymatrix{
0 \ar[r]& \C^*\ar[r]\ar@{^{(}->}[d] &\Pic^0(C/\iota)\ar[r]\ar[d] & \Pic^0((C/\iota)^{\nu}) \ar[r]\ar@{^{(}->}[d] & 0
\\  
0\ar[r] & \C^*\oplus \C^*\ar[r] &\Pic^0(C)\ar[r]& \Pic^0(C^{\nu})\ar[r]& 0
}
\]
The elements of $\C^*$ correspond to all possible choices for identifying the two fibers over a given node.
The vertical maps are pullbacks along quotient maps and $\C^*\to \C^*\oplus \C^*$ is the diagonal map, which corresponds to a choice of gluing a line bundle at the node on $C/\iota$ getting mapped to the same choice of gluing at each of the nodes on $C$.
By the five lemma, the map $\Pic^0(C/\iota)\to \Pic^0(C)$ is an injection.

The Abel map (see Section~\ref{Abelmap})
shows the points in
$\smash{\Picbar^{-1}_{C/\iota}}$
correspond to points on the curve $C/\iota$, and all its elements are line bundles except for the sheaf corresponding to the singular point of $C/\iota$.

The pullback map
$\smash{\Picbar^{-1}_{C/\iota}\to \Picbar^{-2}_{C}}$ sends $\L([x,\iota x])$ to $\L([x]+[\iota x])$, which is also not a line bundle and maps to $0$ under $\varphi_C$.
The pullback map $\smash{\Picbar^{-1}_{C/\iota}\to \Picbar^{-2}_{C}}$ is thus an injection.
The sheaf $\L([x,\iota x])$ is fixed by $\iota^*$.
We may choose an isomorphism
$\smash{\Picbar^{-1}_{C/\iota}\cong \Picbar^0_{C/\iota}}$
compatible with $\iota^*$ to see that
$\smash{\Picbar^0_{C/\iota}}$
includes into $(\ker{j_C})^{\iota^*}$.

By Proposition~\ref{genusone}, $\Picbar^0_{C/\iota}$ is a singular curve of type $I_1$.
\end{proof}

\begin{prop}\label{I2}
Let $C\in \P V_{\el}$ be a curve inducing a genus $1$ singular curve
$X$ of type $I_2$ in $W$. Then $(\ker {j_C})^{\iota^*}$ contains  a curve of type $I_2$. 
\end{prop}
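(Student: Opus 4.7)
The plan is to mimic Proposition~\ref{I1} by exhibiting two smooth rational curves inside $(\ker j_C)^{\iota^*}$ that meet at two points, producing the required type $I_2$ configuration. First, I would pin down the geometry of $C$. By Proposition~\ref{propnaruki}(ii), the $I_2$ fiber $X$ of $W$ decomposes as $\widetilde{C}/\iota \cup E_\tau$, where $\widetilde{C}\subset \widetilde{A}$ is the strict transform of $C$ and $E_\tau$ is the exceptional line of the Kummer blow-up at $\tau$. Since $X$ is of type $I_2$, $\widetilde{C}$ meets $E_\tau$ transversally at two $\iota$-fixed points which, together with the $6$ base-locus $2$-torsion fixed points on $C^\nu$, give $8$ fixed points of $\iota$ on the normalization. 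A Riemann--Hurwitz computation then forces $C^\nu$ to have geometric genus $3$, so that $C$ has exactly one node at $\tau$ whose two branches are each preserved by~$\iota$.

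I would then construct the two smooth rational components of $(\ker j_C)^{\iota^*}$ via the Abel map into $\Picbar^{-2}_C$, each landing in $\ker j_C$ after a suitable translation. The first component comes from $\Hilb^2(C)_\tau$, the family of length-$2$ subschemes of $C$ supported at the node. A local analysis identifies $\Hilb^2(C)_\tau \cong \P(T_\tau A) = E_\tau \cong \P^1$ (parametrizing tangent directions at~$\tau$); since $\iota$ acts as $-1$ on $T_\tau A$ it acts trivially on $\P(T_\tau A)$, so every ideal $I_Z$ is $\iota^*$-fixed, and $j_C(I_Z) = -\Sigma Z = -2\tau = 0$ in~$A$. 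Injectivity of the Abel map on this $\P^1$ follows from a local computation in $\widehat{\mathcal{O}}_{C,\tau}\cong \C[[x,y]]/(xy)$, showing that the length-$2$ ideals $(\alpha x+\beta y, x^2, y^2)$ are pairwise non-isomorphic as $\widehat{\mathcal{O}}_{C,\tau}$-modules for distinct $[\alpha:\beta]$. For the second component, I would consider the family of ideal sheaves $I_{\{p,\iota p\}}$ as $p$ ranges over $\widetilde{C}$; this assignment is well-defined even at the preimages $p_1,p_2$ of $\tau$ (where one takes the length-$2$ subscheme at~$\tau$ in the corresponding branch direction), factors through the smooth quotient $\widetilde{C}/\iota \cong \P^1$, and lands in $(\ker j_C)^{\iota^*}$ because $\iota(\{p,\iota p\}) = \{p,\iota p\}$ and $\Sigma\{p,\iota p\} = p + \iota p = 0$ in $A$.

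The two $\P^1$'s then meet at exactly two points: the specializations $p=p_1$ and $p=p_2$ of the second family send $[p_i]$ to the length-$2$ subscheme $Z_i$ at $\tau$ along the $i$-th branch of~$C$, and these are precisely the two distinguished points of the first $\P^1$ corresponding to the branch tangent directions. The main remaining obstacle is to verify that both rational curves are embedded smoothly and that they meet transversally at $Z_1, Z_2$. Smooth embeddedness reduces to injectivity of the Abel map on each $\P^1$, which follows from $g(C)=4$ together with the fact that generically degree-$2$ line bundles on $C$ have $h^0=0$. Transversality at each $Z_i$ reduces to a tangent space computation comparing the deformation of a length-$2$ subscheme at~$\tau$ along a single branch with its deformation as an $\iota$-symmetric pair $\{p,\iota p\}$; I expect this to be the most delicate step.
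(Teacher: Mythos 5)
Your construction is viable and takes a genuinely different route from the paper. The paper works on the sheaf side, using the Altman--Kleiman presentation scheme: $\Picbar^0_C$ is described via a $\P^1$-bundle $P\to \Pic^0_{C^\nu}$ glued along two sections with a twist by $\O_{C^\nu}(q_1-q_2)$, and the $I_2$ curve is exhibited as the image of the two fibers over $\O_{C^\nu}$ and $\O_{C^\nu}(q_1-q_2)$, the two-point gluing coming from $2q_1\sim 2q_2$ (hyperellipticity of $C^\nu$). You instead work on the Hilbert-scheme side through the Abel map: one $\P^1$ is the punctual Hilbert scheme of tangent directions at the node, the other is the family of $\iota$-symmetric pairs $\{p,\iota p\}$ parametrized by the conic $C^\nu/\iota$, meeting at the two branch subschemes at the node. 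Up to a twist this is the same curve (your first component lies over $\O_{C^\nu}(-q_1-q_2)$ and your second over $\O_{C^\nu}(-2q_1)\cong\O_{C^\nu}(-2q_2)$ in $\Pic_{C^\nu}$, matching the paper's two fibers), and your degree-$2$ divisorial description is in fact a clean answer, in lower degree, to the question raised in Remark~\ref{rmk:compare}. The kernel and fixed-locus conditions are also easier in your formulation ($\Sigma Z=2\tau=0$ and $p+\iota p=0$), whereas the paper has to track $\varphi_C$ through presentations.

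There are, however, two concrete problems. First, your stated justification for injectivity on the punctual $\P^1$ is false: for $\alpha\beta\neq 0$ the ideal $(\alpha x+\beta y,\,x^2,\,y^2)\subset \C[[x,y]]/(xy)$ is principal, since $x(\alpha x+\beta y)=\alpha x^2$ and $y(\alpha x+\beta y)=\beta y^2$; hence all these local modules are isomorphic to the local ring, and the corresponding subschemes are Cartier. The points of your first component are \emph{not} distinguished locally; they are distinguished globally by the gluing datum in $0\to\C^*\to\Pic(C)\to\Pic(C^\nu)\to 0$ (equivalently, because $h^0(\O_C(Z))=1$, using that $q_1+q_2$ is not the $g^1_2$ on $C^\nu$, so distinct $Z_{[\alpha:\beta]}$ are not linearly equivalent). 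The same global argument is what separates the members $\O_C(-p-\iota p)$ of your second component, all of whose pullbacks to $C^\nu$ are the \emph{same} bundle; ``generic degree-$2$ line bundles have $h^0=0$'' is not the relevant statement, since your members are effective and what is needed is $h^0=1$ for these particular bundles (which holds because a section $a+bh$ of the pulled-back pencil descends to $C$ only if it agrees at $q_1,q_2$, forcing $b=0$). Second, the smoothness/transversality of the two rational curves at the two intersection points is left entirely open in your write-up; the paper's presentation-scheme description packages exactly this structural information (the two $\P^1$'s are fibers of a $\P^1$-bundle, and $\kappa$ identifies boundary points of distinct fibers), which is why it needs no separate injectivity or tangency analysis. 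To complete your route you would either carry out that tangent-space computation or, more efficiently, observe that your two components coincide with the paper's $\kappa(\pi^{-1}(\O_{C^\nu}))$ and $\kappa(\pi^{-1}(\O_{C^\nu}(q_1-q_2)))$ and import the structure from there.
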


\begin{proof}
By the discussion in \cite{Naruki}, the
curve $X$
in the linear system $W$ that corresponds to $C$
is the intersection of a line and a conic.
The line in $X$ is the blow-up of a point $q\in A[2]$ that is one of the $10$ such not in the base locus of $\P V_{\el}$. 
The curve $C$ thus has a node at $q$.
The normalization $f \colon C^{\nu} \to C$
inherits an action of $\iota$,
and the quotient $C^{\nu}/\iota$ is
the conic contained in $X$.
Thus, $C^{\nu}$ is hyperelliptic and
as a double cover of $C^{\nu}/\iota$
it is ramified at $8$ points, consisting of the six points $p_1,\ldots,p_6$ in the base locus of $\P V_{\el}$ and the two 
points above $q$, call them $q_1,q_2$.
By Riemann--Hurwitz, $C^{\nu}$ has genus~$3$.
Thus $C$ has arithmetic genus $4$  and geometric genus $3$, so 
the node at $p$ is its unique singularity.

Via Altman and Kleiman's presentation schemes \cite{AKpres}, 
we have the following description of $\Picbar^0_C$ (cf.~\cite[\S3.3]{Kass2008LectureNO}).
Pullback by the normalization map $f:C^{\nu}\to C$ gives the short exact sequence on Picard groups
\begin{equation}\label{normalize}
0\to \C^* \to \Pic^0(C) \to \Pic^0(C^{\nu}) \to 0.
\end{equation}
where again the elements of $\C^*$ correspond to all possible choices for identifying the two fibers over $q$.
The presentation scheme of $f$ gives 
a $\P^1$-bundle $\pi \colon P\to \Pic^0_{C^\nu}$, 
where the fiber over a point $I'\in \Pic^0_{C^\nu}$
is given by presentations of $I'$, that is, short exact sequences
of sheaves on $C$ of the following form:
\[
0\to I\to f_*I'\to k(q)\to 0.
\]  
It follows that $I\in \Picbar^0_C$, 
so there is a natural
morphism $\kappa \colon P \to \Picbar^0_C$,
which is an isomorphism when restricted to the preimage of $\Pic^0_C\subset \Picbar^0_C$.
For each $I' \in \Pic^0_{C^\nu}$,
there is a $\C^* \subset \P^1=\pi^{-1}(I')$,
exactly the $\C^*$ of \eqref{normalize},
which gets mapped injectively under $\kappa$ into $\Pic^0_C$.

Furthermore, there is a closed embedding $\varepsilon'\colon \Pic^0_{C^\nu} \times \{q_1,q_2\} \hookrightarrow P$, which sends a pair $(I',q_i)$ to the presentation
\[0 \to f_*I'(-q_i) \to f_*I' \to f_*(I'|_{q_i}) \to 0.\] This gives the description of the rest of the $\P^1$-fiber of $\pi$ over a point $I' \in \Pic^0_{C^\nu}$: these are the two points compactifying the $\C^*$ described above. Thus to complete the description of $\Picbar^0_C$, it remains to describe $\kappa$ restricted to $\varepsilon'(\Pic^0_{C^\nu} \times \{q_1,q_2\})$. 

Here, $\kappa$ is $2$-to-$1$, but does not just trivially glue the two copies of $\Pic^0_{C^\nu}$ together. Rather they are glued with a twist: 
\[\kappa\varepsilon'(I',q_1)=\kappa\varepsilon'(I'(q_1-q_2),q_2).\]
Since $C^\nu$ is hyperelliptic, $2q_1 \sim_{\text{lin}} 2q_2$ and
$\O(q_1-q_2)$ is $2$-torsion in $\Pic^0_{C^\nu}$,
which further implies that
\[\kappa\varepsilon'(I'(q_1-q_2),q_1)=\kappa\varepsilon'(I',q_2).\]

With this observation in hand, we now describe the one-dimensional component of the locus of $\Picbar^0_C$ that is in $(\ker j_C)^{\iota^*}$. While we are working in $\Picbar^0_C$, we will instead consider the fiber over $0_A$ of $\varphi_C$. By abuse of notation, we will also call this $\ker \varphi_C$. 

\begin{claim}
The locus of $\Picbar^0_C$ that is both fixed by $\iota^*$ and is in $\ker \varphi_C$ contains
\[\kappa\left(\pi^{-1}(\O_{C^\nu})\cup \pi^{-1}(\O_{C^\nu}(q_1-q_2))\right),\]
which is two copies of $\P^1$ intersecting at two points, i.e.~a singular curve of type $I_2$.
\end{claim}

\begin{proof}
  First we observe that if $I'\in \Pic^0_{C^{\nu}}$ is fixed by $\iota^*$,
and $$0\to I\to f_*I'\to k(q)\to 0$$ is a presentation of $I'$, then 
$I$ is also fixed by $\iota^*$.
Indeed, by push-pull, we know that $f_*\iota^*I'\cong \iota^*f_*I'$.
We have the short exact sequence
\[
0\to \iota^*I\to \iota^*f_*I'\to \iota^*k(q)\to 0,
\]
so if $I'$ is fixed by $\iota^*$, then $I\cong \iota^*I$. Thus, if $I'$ is fixed, then the whole $\P^1$-fiber in $P$ is pointwise fixed as well.
Note also that, given a short exact sequence
$0\to I\to f_*I'\to k(q)\to 0$, if
$I\in \ker \varphi_C$, then 
by the discussion in the proof of Lemma~\ref{phiC} about the behavior of $\varphi_C$ in short exact sequences, 
so are all the other possible $I$ giving presentations of $I'$. 

Since there is a short exact sequence
\[
0\to \O_C\to f_*\O_{C^\nu}\to k(q)\to 0
\]
and $\O_C\in \ker\varphi_C$, 
we know any other
kernels of presentations of $f_*\O_{C^\nu}$ 
will as well.
We also have that $\O_{C^\nu}$ is fixed by $\iota^*$, so it follows that $\kappa(\pi^{-1}(\O_{C^\nu}))$ is both fixed by $\iota^*$ and
in $\ker\varphi_C$. 

The same holds for $\kappa(\pi^{-1}(\O_{C^\nu}(q_1-q_2))$: since $q_1$ and $q_2$ are fixed by $\iota$, $\O_{C^\nu}(q_1-q_2)$ is fixed by $\iota^*$,
and we will show that 
any presentation is sent to $0$ by $\varphi_C$.
For a presentation
\[0 \to I \to f_*\O_{C^\nu}(q_1-q_2) \to k(q) \to 0,\]
applying the inclusion $h_C:C\hookrightarrow A$ and $\Phi_P$,
we have
\[\det \Phi_P(h_{C*}I) \otimes \mathcal{P}_q\cong \det \Phi_P(h_{C*}f_*\O_{C^\nu}(q_1-q_2)). \]
There is also a presentation
\begin{equation}\label{pres1}
  0 \to f_*\O_{C^\nu}(-q_1) \to f_*\O_{C^\nu}(q_1-q_2) \to k(q) \to 0,
  \end{equation}
which gives 
\[\det \Phi_P(h_{C*}f_*\O_{C^\nu}(-q_1)) \otimes \mathcal{P}_q\cong \det \Phi_P(h_{C*}f_*\O_{C^\nu}(q_1-q_2)), \]
and hence
\[\det \Phi_P(h_{C*}I) \cong \det \Phi_P(h_{C*}f_*\O_{C^\nu}(-q_1)).\]
On the other hand, there is a presentation
\begin{equation}\label{pres2}
  0 \to f_*\O_{C^\nu}(-q_1) \to f_*\O_{C^\nu} \to k(q) \to 0,
\end{equation}
so $f_*\O_{C^\nu}(-q_1)\in \kappa(\pi^{-1}(\O_{C^\nu}))$, which we showed above
is in $\ker\varphi_C$. 
Thus the same is true for $I$.

It remains to show that these two $\P^1$'s in $\Picbar^0_C$ are glued together at two points. But this follows from the description of $\Picbar^0_C$, since
\[\kappa\varepsilon'(\O_{C^\nu},q_1)=\kappa\varepsilon'(\O_{C^\nu}(q_1-q_2),q_2),\]
and
\[\kappa\varepsilon'(\O_{C^\nu}(q_1-q_2),q_1)=\kappa\varepsilon'(\O_{C^\nu},q_2).\qedhere\]
\end{proof}

\noindent While above we work in degree $0$, we can twist by a degree $d$ line bundle on $C$ to get the description in $\Picbar^d_C$. 
This completes the proof of Proposition~\ref{I2}.
\end{proof}

\begin{rmk}\label{rmk:compare}
It is interesting to consider $(\ker j_C)^{\iota^*}$ in Propostion~\ref{I2}
from the point of view of the Abel map. We consider the case where $d=-4$. The fixed locus
$(\ker j_C)^{\iota^*}\subseteq \Picbar_C^{-4}$
contains all divisors of the form $-(x+\iota x+ y+\iota y)$ for $x,y\in C$, but
the information from the 
Abel map alone does not
make clear which of these divisors get identified under linear equivalence in $\Picbar_C^{-4}$.
We will show that these divisors are all contained in the curve from Proposition~\ref{I2}. 
We may choose an isomorphism $\Picbar^0_C\cong\Picbar^{-4}_C$ by subtracting four copies of a $2$-torsion point $p\in C$ not at the node. Let $p'\in C^{\nu}$ be preimage of $p$ under the normalization map. Since $C^\nu$ is hyperelliptic, $\O_{C^\nu}(-4p')\cong \O_{C^\nu}(-x'-\iota x'-y'-\iota y')$, where $x'$ and $y'$ are the preimages of $x$ and $y$ in $C^{\nu}$. There is a presentation
\[0 \to \O_C(-x-\iota x-y-\iota y) \to (f_*\O_{C^\nu})(-4p)\to k(q) \to 0,\]
so these divisors all lie in the $\P^1$ corresponding to the twist of $\kappa(\pi^{-1}(\O_{C^\nu}))$.

The divisors $-(x+\iota x+ y+\iota y)$
correspond to a two-dimensional family in $\Hilb^4_C$, but their image in
$\Picbar^{-4}_C$ is at most $1$-dimensional, so they must
lie in the exceptional locus of the Abel map. Since $C$ is not hyperelliptic, the canonical morphism gives a closed immersion into $\P^3$ and divisors in the exceptional locus are those that lie on a hyperplane in $\P^3$; we see there must be an 
interaction of these planes with the action of $\iota$, but the particulars of it are not immediately clear.

It would also be nice to have a description
of the elements in $\Picbar^{-4}_C$ contained in the other copy of $\P^1$ in $(\ker j_C)^{\iota^*}$.
Since the canonical bundle $\omega_C$ is fixed by $\iota^*$ and 
$f^*\omega_C\cong \omega_{C^{\nu}}\otimes\O_{C^{\nu}}(q_1+q_2)\cong \O_{C^{\nu}}(4p'+q_1+q_2)$, line bundles on $C$ which fit into presentations with middle term $f_*\left(f^*\omega_C^{-1}\otimes \O_{C^{\nu}}(x'+\iota x')\right)$ 
lie in the $\P^1$ corresponding to the twist of 
$\kappa(\pi^{-1}(\O_{C^\nu}(q_1-q_2))$. However, the question of exactly which effective divisors give rise to these line bundles is again dependent on the
geometry of the canonical embedding.
\end{rmk}

\begin{rmk}
In this section we have shown that the elliptic K3 surface in $\Fix(\iota^*)$
has the same types of singular fibers as those
in the fibration of the Kummer K3 surface
studied by Naruki~\cite{Naruki}.
By Proposition~\ref{fixedKummer} for 
$K_A(0,l,-1)$, the K3 surface in the fixed-point locus is isomorphic to the Kummer K3 surface.
However, it is not apparent that in general there is any kind of natural map from the K3 surface studied by Naruki to $\Fix(\iota^*)$, or that these fixed-point loci are Kummer K3 surfaces.
\end{rmk}

\section{A $(1,3)$-polarized example: Isolated points}
\label{sec:JacobiansIsolatedpts}

Finally, we seek a description of the $36$ isolated points in $\Fix(\iota^*)$. We will use a combination of the Abel map and information about the
geometry of $2$-torsion points in a $(1,3)$-polarized abelian surface 
to finish our description of the fixed loci.

\subsection{Geometry of $A[2]$}

The description of the isolated points in $\Fix(\iota^*)\subset K_A(0,l,s)$ will require an understanding of line bundles on curves $C \in |L|^{\iota^*}$ corresponding to divisors which sum to $0$ in $A$.

As discussed in \cite{BarthNieto}, 
the line bundle $L^2$ on our $(1,3)$-polarized abelian surface gives an embedding of the \textit{desingularized} Kummer K3 surface into $\P^3$.
They describe an action of the Heisenberg group on $\P^3$ that connects the geometry of the group action of elements $A[2]$ to the corresponding lines in the Kummer K3 surface.

We use notation from
Hudson's analysis of $A[2]$ for principally polarized abelian surfaces \cite[Ch.~1,\S 4]{Hudson}, which has the same group structure:
We write the group of points of $A[2]$ in multiplicative notation
in terms of (a not minimal set of) generators $1,A,B,C,A',B',C'$ 
where $1$ is the identity.
The following multiplication tables hold:
\[
    \begin{tabular}{>{$}l<{$}|*{6}{>{$}l<{$}}}
    ~ &  A  & B & C  \\
    \hline\vrule height 12pt width 0pt
    A &  1 & C   & B \\
    B &     & 1   & A \\
    C &    &   & 1 \\
    \end{tabular}
\quad\quad\quad
    \begin{tabular}{>{$}l<{$}|*{6}{>{$}l<{$}}}
    ~ &  A'  & B' & C'  \\
    \hline\vrule height 12pt width 0pt
    A' &  1 & C'   & B' \\
    B' &     & 1   & A' \\
    C' &    &   & 1 \\
    \end{tabular}
\]
Following Naruki \cite{Naruki}, we see 
the six points of $A[2]$ that occur in the base locus of $\P V_{\el}$
must be a set of six
in the group that coincides with those
that would lie on a plane in
Hudson's $(16,6)$ configuration, and so we
take the following six points to be in the base locus of $\P V_{\el}$:
\begin{equation}\label{six}
AB', AC', BC', BA', CA', CB'
\end{equation}
Any possible choice of six points will have the same numerical properties
described below 
as they will differ by a translation.

The remaining ten points of $A[2]$ are then:
\begin{equation}\label{ten}
1,A, A', B, B', C, C',AA',BB',CC'
\end{equation}

We will need the following observations in our 
identification of the isolated fixed points:

\begin{lemma}\label{count}
\begin{enumerate}[{\rm (a)}]  
\item The product of any four distinct points in \eqref{six} cannot be the identity.
\item Given any point in \eqref{ten}, there are exactly two ways to then choose three distinct points from those in \eqref{six} so that the product of the four points is the identity. 
\item There are fifteen ways to choose four distinct points from among \eqref{ten} so that their product is the identity.
\end{enumerate}  
\end{lemma}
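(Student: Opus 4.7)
The plan is to use the direct-product decomposition suggested by the tables: write $V_1 := \{1, A, B, C\}$ and $V_2 := \{1, A', B', C'\}$ for the two Klein four-subgroups displayed, so that $A[2] = V_1 \times V_2 \cong \F_2^4$. Under this identification, the six points in \eqref{six} are precisely the pairs $(x, y)$ with $x \in V_1 \setminus \{1\}$, $y \in V_2 \setminus \{1\}$, and $x, y$ ``different letters,'' while \eqref{ten} is the union of the three $2$-dimensional subgroups $V_1$, $V_2$, and the diagonal $\Delta := \{1, AA', BB', CC'\}$, any two of which meet only in $\{1\}$. I will also invoke the $S_3 \times S_3$-action permuting $\{A, B, C\}$ and $\{A', B', C'\}$ independently, which preserves both \eqref{six} and \eqref{ten}.

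For (a), note that in the product of all six elements of \eqref{six} each of the generators $A, B, C, A', B', C'$ appears exactly twice, so since every element of $A[2]$ is $2$-torsion the total product is $1$. Hence if four distinct elements of \eqref{six} had product $1$, the complementary two would also have product $1$; being their own inverses they would coincide---a contradiction.

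For (b), the $S_3 \times S_3$-orbits on \eqref{ten} are $\{1\}$, $\{A, B, C\}$, $\{A', B', C'\}$, and $\{AA', BB', CC'\}$, so it suffices to verify the claim for the four representatives $1, A, A', AA'$. A brief direct enumeration handles each; for instance, for $g = 1$ the only two valid triples are $\{AB', BC', CA'\}$ and $\{AC', BA', CB'\}$, and the other three cases are entirely analogous.

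For (c), observe that four distinct elements of $A[2] \cong \F_2^4$ multiply to $1$ if and only if they form an affine $2$-plane. I split by whether $1$ lies in the subset. If it does, the subset is a linear $2$-subspace of $A[2]$ contained in \eqref{ten}; analysis of how such a subspace intersects $V_1, V_2, \Delta$ leaves only the three subgroups $V_1, V_2, \Delta$ themselves together with the three ``letter subgroups'' generated by $\{A, A'\}$, $\{B, B'\}$, $\{C, C'\}$, for six subsets. If $1$ is not in the subset, the four elements distribute as $(a, b, c)$ across $V_1 \setminus \{1\}, V_2 \setminus \{1\}, \Delta \setminus \{1\}$ with $a + b + c = 4$; since any two elements of the same $V_i \setminus \{1\}$ multiply to the third, a case check rules out every distribution except $(2, 1, 1)$, $(1, 2, 1)$, $(1, 1, 2)$, and a short coset argument then yields exactly three subsets in each, for nine more. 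In total $6 + 9 = 15$. The main obstacle is the combinatorial bookkeeping in this last step, made quick by the transverse $2$-plane decomposition of \eqref{ten}.
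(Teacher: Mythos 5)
Your proof is correct in substance, and it takes a genuinely different route from the paper's. The paper simply verifies the lemma by direct enumeration (listing the two triples for the representative point $1$ and writing out all fifteen quadruples), whereas you exploit the structure of the configuration: the decomposition $A[2]=V_1\times V_2$ with $\eqref{ten}=V_1\cup V_2\cup\Delta$, the total-product parity trick for (a) (the product of all six points of \eqref{six} is $1$, so a vanishing four-fold product forces the complementary pair to coincide), and for (c) the characterization of four distinct $2$-torsion points with trivial product as affine $2$-planes in $\F_2^4$, split according to whether the plane passes through $1$. I checked the counts your sketch relies on: the six linear planes inside \eqref{ten} are exactly $V_1,V_2,\Delta$ and the three letter subgroups, the only admissible distributions in the non-linear case are $(2,1,1),(1,2,1),(1,1,2)$ with three subsets each, and the enumerations for the representatives $1,A,A',AA'$ in (b) each give exactly two triples. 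Your approach buys conceptual transparency and would adapt to translates of the configuration, at the cost of more setup; the paper's buys brevity and an explicit list that is reused later in Sections~\ref{abelfix}ff.

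One assertion in your write-up is false as stated, though the proof survives its repair: the full $S_3\times S_3$ acting independently on $\{A,B,C\}$ and $\{A',B',C'\}$ does \emph{not} preserve \eqref{six} and \eqref{ten}. For instance, $(\mathrm{id},(A'B'))$ sends $AB'\in\eqref{six}$ to $AA'\in\eqref{ten}$, and correspondingly $\{AA',BB',CC'\}$ is not an $S_3\times S_3$-orbit. The symmetry group you actually want is the diagonal $S_3$ (simultaneous permutation of primed and unprimed letters), optionally extended by the swap $A\leftrightarrow A'$, $B\leftrightarrow B'$, $C\leftrightarrow C'$; this group does preserve both \eqref{six} and \eqref{ten}, and its orbits on \eqref{ten} are exactly the four sets you list (merging to three if you include the swap). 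So the reduction of (b) to the representatives $1,A,A',AA'$ stands once you name the correct group, and no additional verification is needed--but as written the justification invokes a symmetry that the configuration does not have.
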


\begin{proof}
The results may be verified directly.

In part (b), for instance, if we choose $1$, we have exactly
\[
(1)(AB')(BC')(CA') \quad\text{and}\quad (1)(AC')(BA')(CB').
\]
  
In part (c), the fifteen possibilities are:
\begin{align*}
&(1)(A)(A')(AA') & &(A)(B)(C')(CC') \\
&(1)(B)(B')(BB') & &(A')(B')(C)(CC') \\
&(1)(C)(C')(CC') & &(A)(B')(C)(BB') \\
&(1)(AA')(BB')(CC') & &(A')(B)(C')(BB') \\
&(1)(A)(B)(C) & &(A')(B)(C)(AA') \\
&(1)(A')(B')(C') & &(A)(B')(C')(AA')\\
&(A)(A')(BB')(CC') & &(B)(B')(AA')(CC')  \\
& & &(C)(C')(AA')(BB') \qedhere
\end{align*}
\end{proof}

\subsection{The fiber of  $\Fix(\iota^*)$ over $\P V_{\hyp}$}\label{abelfix}

Let $C\in \P V_{\hyp}\cong \P^0$. For $A$ a general $(1,3)$-polarized abelian surface, $C$ is smooth by \cite[Lem.~3.4]{BSgenus4curves},
so the kernel of $\varphi_C$ \eqref{phiCdef}
is an abelian surface (see Prop.~\ref{absurffib}). 
The action $\ker\varphi_C$ inherits from $\iota^*$ on $K(0,l,s)$ is the action of $[-1]$ on it as an abelian surface.
Thus there will be exactly $16$ isolated fixed points, consisting of the $2$-torsion points on $\ker\varphi_C$.

We may also analyze $(\ker\varphi_C)^{\iota^*}$
using the Abel map $\alpha$ from~\eqref{abelmap}.
Since $C$ is hyperelliptic, the canonical morphism is the degree $2$ morphism $\pi\colon C\to \P^1$. 
The canonical divisors of $C$ are
of the form $\pi^{-1}(t_1)+\pi^{-1}(t_2)+\pi^{-1}(t_3)$ for $t_1,t_2,t_3\in\P^1$.
The sets of points in $\Hilb^{4}_C$ which sum to $0$ and are fixed by $\iota^*$ 
consist of points of the form $\pi^{-1}(t_1)+\pi^{-1}(t_2)$, which are all linearly equivalent, and of four distinct $2$-torsion points that sum to~$0$.

From this point of view we find that the sixteen isolated points in 
$\Pic^{-4}_C$ that sum to $0$ and are fixed by $\iota^*$ are (the negative of) the fifteen points given by Lemma~\ref{count}(c) and the one point that is the image under $\alpha$ of all points of the form $\pi^{-1}(t_1)+\pi^{-1}(t_2)$.

This argument may be used to show the same result holds for
$\Pic^d_C$. 
We can take the 
isomorphism $\Pic^{-4}_C\cong\Pic^d_C$ to be given by adding $d+4$ copies of a fixed $2$-torsion point $p$, 
in which case the isomorphism commutes with $\iota^*$.
It is not always possible to choose this isomorphism
so that it commutes with taking the kernel of the summation map, but we may instead consider the elements in $\Pic^d_C$ that sum to $(d+4)\cdot p$, which amounts to simply performing this calculation in a different
fiber of the Albanese map \eqref{albtor}, which is related to our preferred fiber by an isomorphism.

\subsection{The fibers of  $\Fix(\iota^*)$ over $\P V_{\el}$}

In the last section we found 16 of the 36 isolated points in $\Fix(\iota^*)$.
To find the rest we examine $(\ker\varphi_C)^{\iota^*}$ as $C$ varies in $\P V_{\el}\cong \P^1$. 

If $C$ is smooth, by our analysis of the tangent space of
$\ker\varphi_C$ in the proof of Proposition~\ref{ellfib}, 
$(\ker\varphi_C)^{\iota^*}$ is isomorphic to the elliptic curve $C/\iota$, and there are no isolated points.

Let $C\in \P V_{\el}$ be singular of type $I_1$.
Consider the modified Abel map
$$\alpha\colon \Hilb^4_C\to \Picbar^{-4}_C.$$
The curve $C$ passes through exactly six $2$-torsion points \eqref{six} (cf.~Proposition \ref{I1}).
By Lemma~\ref{count}(a)
the only sets of four points on $C$ that sum to $0$ and are fixed by $\iota^*$ are those of the form $(x,\iota x,y,\iota y)$ for some $x,y\in C$.
Points of this form are already contained in the image of the pullback
$\Pic_{C/\iota}^0\cong \Pic_{C/\iota}^{-2}\to \Pic_{C}^{-4}$. 
Thus $(\ker\varphi_C)^{\iota^*}\cong C/\iota$, and there are no isolated points.

Now let $C\in \P V_{\el}$ be singular of type $I_2$.
The curve $C$ passes through seven points in $A[2]$: those in the
base locus of $\P V_{\el}$ (see~\eqref{six})
as well as one additional $2$-torsion point.
The points in $\Hilb^4_C$ that sum to $0$ and are fixed by $\iota^*$ are those of the form $(x,\iota x,y,\iota y)$ for some $x,y\in C$, as well as
any tuple of four $2$-torsion points that sum to $0$.
By Lemma~\ref{count}(a,b), there are exactly two points of the latter form and they are isolated from the points of the former form (cf.~Remark~\ref{rmk:compare}).
Thus the fiber of $\Fix(\iota^*)$ over $C$ consists precisely of a singular curve of type $I_2$ and two isolated fixed points.
There are ten such singular curves, and thus all of the $36$ isolated points in $\Fix(\iota^*)$ are now accounted for.

\begin{rmk}
It would be interesting to use the presentation scheme description of $\Picbar^d_C$, for $C\in \P V_{\el}$ singular, to identify the two isolated points in the fiber of $\Fix(\iota^*)$ over $C$. For example, what line bundles do they pull back to on $C^\nu$?
\end{rmk}

\bibliographystyle{alpha}
\bibliography{mainbib}

\medskip

{\footnotesize
\noindent Sarah Frei \\
Department of Mathematics \\
Dartmouth College\\ 
27 N. Main Street\\ 
Hanover, NH 03755\\
sarah.frei@dartmouth.edu \bigskip

\noindent Katrina Honigs \\
Department of Mathematics\\
Simon Fraser University\\
8888 University Drive\\
Burnaby, B.C.\\
Canada V5A 1S6\\
khonigs@sfu.ca
}

\end{document}